\documentclass{amsart}
\usepackage{amssymb,amsmath,amsthm,amsxtra}
\usepackage{hyperref}
\usepackage[all]{xy}
\usepackage[usenames]{color}

\usepackage{amscd}
\usepackage{amsthm}
\usepackage{amsfonts}
\usepackage{amssymb}
\usepackage{mathrsfs}
\usepackage{url}
\oddsidemargin=0cm
\evensidemargin=0cm
\baselineskip 18pt \textwidth 16cm \sloppy \theoremstyle{plain}

\newtheorem{thm}{Theorem}[subsection]
\newtheorem{prop}[thm]{Proposition}
\newtheorem{lem}[thm]{Lemma}
\newtheorem{defn}[thm]{Definition}
\newtheorem{notn}[thm]{Notation}
\newtheorem{cor}[thm]{Corollary}

\newtheorem{rem}[thm]{Remark}

\newtheorem*{theorem*}{Theorem}
\newtheorem*{remark*}{Remark}
\newtheorem{lemma}[thm]{Lemma}

\newtheorem{remark}[thm]{Remark}

\newtheorem{theorem}[thm]{Theorem}
\newtheorem{definition}[thm]{Definition}
\newtheorem{notation}[thm]{Notation}

\newtheorem{corollary}[thm]{Corollary}

\newtheorem*{conjecture*}{Conjecture}

\newtheorem{introtheorem}{Theorem}

\newtheorem{introcorollary}[introtheorem]{Corollary}

\newcommand{\al}{\alpha}
\newcommand{\be}{\beta}

\newcommand{\de}{\delta}
\newcommand{\ep}{\epsilon}

\newcommand{\la}{\lambda}

\newcommand{\om}

\newcommand{\De}{\Delta}

\newcommand{\cA}{\mathcal A}
\newcommand{\cB}{\mathcal B}
\newcommand{\cC}{\mathcal C}
\newcommand{\cD}{\mathcal D}
\newcommand{\cE}{\mathcal E}
\newcommand{\cF}{\mathcal F}
\newcommand{\cG}{\mathcal G}
\newcommand{\cH}{\mathcal H}

\newcommand{\cK}{\mathcal K}

\newcommand{\cM}{\mathcal M}

\newcommand{\cO}{\mathcal O}
\newcommand{\cP}{\mathcal P}

\newcommand{\bA}{\mathbb A}

\newcommand{\bC}{\mathbb C}

\newcommand{\bF}{\mathbb F}

\newcommand{\bN}{\mathbb N}

\newcommand{\bQ}{\mathbb Q}
\newcommand{\bR}{\mathbb R}

\newcommand{\bZ}{\mathbb Z}

\newcommand{\fb}{\mathfrak b}

\newcommand{\fX}{\mathfrak X}

\newcommand{\bmat}{\left( \begin{matrix}}
\newcommand{\emat}{\end{matrix}\right)}

\newcommand{\lbl}[1]{\label{#1}}
\newcommand{\nir}[1]{{#1}}

\newcommand{\Dima}[1]{{#1}}
\newcommand{\Rami}[1]{{#1}}
\newcommand{\RGRCor}[1]{{{#1}}}
\newcommand{\DimaGRCor}[1]{{{#1}}}

\newcommand{\marrow}{\marginpar{$\longleftarrow$}}
\newcommand{\Removed}[1]{}

\DeclareMathOperator{\GL}{GL}

\DeclareMathOperator{\M}{Mat}
\DeclareMathOperator{\Stab}{Stab}
\DeclareMathOperator{\Hom}{Hom}
\DeclareMathOperator{\Spec}{Spec}
\DeclareMathOperator{\Img}{Im}

\newcommand{\val}{val}
\newcommand{\res}{res}
\newcommand{\bs}{\backslash}
\newcommand{\Sc}{{\mathcal S}}
\newcommand{\Span}{{\operatorname{Span}}}
\newcommand{\oH}{\operatorname{H}}
\newcommand{\charac}{\operatorname{char}}

\begin{document}

\author{Avraham Aizenbud}
\address{Avraham Aizenbud, Faculty of Mathematics and Computer Science, The Weizmann Institute of Science POB 26, Rehovot 76100, ISRAEL.}
\email{aizner@yahoo.com}
\urladdr{\url{http://www.wisdom.weizmann.ac.il/~aizenr/}}

\author{Nir Avni}
\address{Nir Avni, Department of Mathematics, Harvard University, One Oxford Street Cambridge MA 02138 USA.}
\email{avni.nir@gmail.com}
\urladdr{\url{http://www.math.harvard.edu/~nir}}

\author{Dmitry Gourevitch}
\address{Dmitry Gourevitch, School of Mathematics,
Institute for Advanced Study,
Einstein Drive, Princeton, NJ 08540 USA}
\email{guredim@yahoo.com}
\urladdr{\url{http://www.math.ias.edu/~dimagur/}}

\title{Spherical pairs over close local fields}
\date{\today}

\maketitle
\begin{abstract}
 Extending results of \cite{Kaz} to the relative case, we relate harmonic analysis over some spherical spaces $G(F)/H(F)$, where $F$ is a field of positive characteristic, to harmonic analysis over the spherical spaces $G(E)/H(E)$, where $E$ is a suitably chosen field of characteristic 0. 

\Removed{One of the ingredients of the proof is a condition for finite generation of the space of $K$-invariant compactly supported functions on $G(E)/H(E)$ as a module over the Hecke algebra. }
We apply our results to show that the pair $(\GL_{n+1}(F),\GL_n(F))$ is a strong Gelfand pair for
 all
local fields \Dima{of arbitrary characteristic}, and that the pair $(\GL_{n+k}(F),\GL_n(F)\times\GL_k(F))$ is a Gelfand pair for
local fields
\Dima{of any characteristic different from 2}.
\DimaGRCor{ We also give a criterion for finite generation of the space of $K$-invariant compactly
 supported functions on $G(E)/H(E)$ as a module over the Hecke algebra}.

\end{abstract}

 \tableofcontents

\setcounter{section}{-1}

\section{Introduction}

\marrow Local fields of positive characteristic can be approximated by local fields of characteristic zero. If $F$ and $E$ are local fields, we say that they are $m$-close if $O_F/\cP_F^m \cong O_E/\cP_E^m$, where $O_F, O_E$ are the rings of integers of $F$ and
$E$, and $\cP_F,\cP_E$ are their maximal ideals. \Dima{For example, $F_p((t))$ is $m$-close to $\bQ_p(\sqrt[m]{p})$.}
\Dima{More generally, for any local field $F$ of positive characteristic $p$ and any $m$ there exists a (sufficiently ramified) extension of $\bQ_p$ that is $m$-close to $F$.}

Let $G$ be a reductive group defined over $\bZ$. For any local
field $F$ and conductor $\ell \in \bZ_{\geq 0}$, the Hecke algebra
$\cH_{\ell}(G(F))$ is finitely generated and finitely presented. Based
on this fact, Kazhdan showed in \cite{Kaz} that for any $\ell$ there
exists $m \geq \ell$ such that the algebras $\cH_{\ell}(G(F))$ and $
\cH_{\ell}(G(E))$ are isomorphic for any $m$-close fields $F$ and
$E$. This allows one to transfer certain results in representation theory of reductive groups from
local fields of zero characteristic to local fields of positive
characteristic.

In this paper we investigate a relative version of this technique.
\Dima{Let $G$ be a reductive group and $H$ be a spherical subgroup.
Suppose for simplicity that both are defined over $\bZ$.}

In the first part of the paper
we consider the space $\Sc(G(F)/H(F))^{\Dima{K}}$ of \Dima{compactly supported} 
functions
on $G(F)/H(F)$ which are invariant with respect to \Dima{a compact open} 
subgroup \Dima{$K$}. We prove under certain assumption on the pair
$(G,H)$ that this space is finitely generated (and hence finitely
presented) over the Hecke algebra $\cH_{\Dima{K}}(G(F))$.

\Dima{\begin{introtheorem}[see Theorem \ref{FinGen}] \lbl{IntFinGen}
Let $F$ be a (non-Archimedean) local field. Let $G$ be a
reductive group  and $H<G$ be an algebraic subgroup both defined over $F$. Suppose that
for any parabolic subgroup $P \subset G$, there is a finite number
of double cosets $P(F) \setminus G(F) / H(F)$. Suppose also that
for any irreducible smooth representation $\rho$ of $G(F)$ we have
\begin{equation} \lbl{IntFinMult}
\dim\Hom_{H(F)}(\rho|_{H(F)}, \bC) < \infty .
\end{equation}
Then for any \Dima{compact open} 
subgroup $K<G(F)$, the space
$\Sc(G(F)/H(F))^{K}$ is a finitely generated module over the
Hecke algebra $\mathcal{H}_{K}(G(F))$.
\end{introtheorem}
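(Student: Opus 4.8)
The plan is to translate the statement into the language of smooth representations and then induct on $\dim G$. By the usual dictionary between modules over $\cH_K(G(F))$ and smooth representations of $G(F)$ generated by their $K$-fixed vectors, the module $\Sc(G(F)/H(F))^K$ is finitely generated over $\cH_K(G(F))$ if and only if the $G(F)$-subrepresentation $\langle\Sc(G(F)/H(F))^K\rangle$ generated by the $K$-invariant functions is a finitely generated representation. Any smooth representation generated by its $K$-fixed vectors is supported on the finite set $\mathfrak S_K$ of Bernstein blocks of $G(F)$ that contain representations with nonzero $K$-fixed vectors, and the part of $\cH(G(F))$ attached to a single block is Noetherian, being a finite module over the corresponding finitely generated commutative piece of the Bernstein centre. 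Hence it suffices to prove: for every finite set $\mathfrak S$ of Bernstein blocks, the direct summand $\pi_{\mathfrak S}$ of $\pi:=\Sc(G(F)/H(F))$ supported on $\mathfrak S$ is a finitely generated representation; then $\langle\Sc(G(F)/H(F))^K\rangle$ sits inside $\pi_{\mathfrak S_K}$, which is a finitely generated module over a Noetherian ring and hence Noetherian, so the submodule is finitely generated.

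I would prove this last assertion by induction on $\dim G$ (for a torus only the cuspidal part below occurs, and it is immediate). Fix a finite set $\mathfrak S$ and split $\pi_{\mathfrak S}$ into its summand on cuspidal blocks $[G,\sigma]$ and its summand on non-cuspidal blocks $[M,\sigma]$ with $M\subsetneq G$. For a non-cuspidal block, fix a parabolic $P=MN$; by Casselman's submodule theorem the component $\pi_{[M,\sigma]}$ embeds into $\mathrm{Ind}_{\bar P}^{G}$ applied to the $[M,\sigma]$-component of the Jacquet module $r_{\bar N}(\pi)$. The geometric lemma of Bernstein--Zelevinsky, together with hypothesis (P1) on the finiteness of $P(F)\backslash G(F)/H(F)$, exhibits $r_{\bar N}(\pi)=r_{\bar N}\big(\Sc(G(F)/H(F))\big)$ as a finite iterated extension of representations built, up to twists by modulus characters, from the function spaces $\Sc\big(M(F)/(M\cap xHx^{-1})(F)\big)$ with $x$ running over the finitely many double-coset representatives. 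Hypotheses (P1) and (P2) descend to each pair $(M, M\cap xHx^{-1})$ --- for (P1) by intersecting parabolic subgroups of $M$ with the orbit data, for (P2) by second adjointness, relating $(M\cap xHx^{-1})$-invariant functionals on representations of $M$ to $H$-invariant functionals on their parabolic inductions to $G$ and applying (P1) once more --- so by the inductive hypothesis the finite-block summands of each such function space are finitely generated. Since parabolic induction preserves finite generation and we work inside finitely many Noetherian blocks, the non-cuspidal summand of $\pi_{\mathfrak S}$ is finitely generated.

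For the cuspidal summand no proper parabolic is available, and hypothesis (P2) must be used directly. An irreducible $\rho$ occurs as a quotient of $\pi=\mathrm{ind}_{H(F)}^{G(F)}\mathbf 1$ only if $\Hom_{H(F)}(\rho^{\vee}|_{H(F)},\bC)\neq 0$ (Frobenius reciprocity for compact induction, the modulus character being absorbed), and by (P2) this space is finite dimensional; moreover, the geometric lemma applied across all parabolic subgroups, again via (P1), shows that only finitely many cuspidal inertial classes $[G,\sigma]$ occur in $\pi$. One then wants to conclude that each $\pi_{[G,\sigma]}$, a module over the Noetherian cuspidal Hecke algebra $\cH_{[G,\sigma]}(G(F))\cong\bC[\Psi]\rtimes W_{\sigma}$, is finitely generated; this requires controlling the behaviour of $\pi$ in the unramified-twist directions of $\Psi$, using that $\pi$ is generated by its $K$-fixed vectors (which are insensitive to unramified twists) together with the boundedness coming from (P1). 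Assembling the cuspidal and non-cuspidal summands completes the induction.

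The step I expect to be the main obstacle is precisely this cuspidal case: finite multiplicity by itself does not force finite generation of a module over $\bC[\Psi]$, so one must rule out an infinite "torsion tower" for $\pi_{[G,\sigma]}$, and this is where the interaction between the orbit-finiteness hypothesis (P1) and the finite-multiplicity hypothesis (P2) has to be exploited in an essential way. A second, more bookkeeping-heavy point will be verifying carefully that (P1) and (P2) really do descend to the Levi pairs $(M, M\cap xHx^{-1})$ and that the finitely many $P(F)$-orbits on $G(F)/H(F)$ genuinely organise $r_{\bar N}(\pi)$ into a finite filtration of the asserted shape --- this is the place where the geometric lemma converts the qualitative finiteness hypotheses into the quantitative statement of finite generation.
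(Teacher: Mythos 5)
Your overall architecture matches the paper's: decompose by Bernstein blocks, descend the hypotheses to Levi subquotients via the geometric lemma and the finiteness of $P(F)\backslash G(F)/H(F)$, and isolate the cuspidal part as a separate problem. (The paper encodes this via Bernstein's criterion, Lemma \ref{VKFinGen}, testing against the projective generators $i_{GM}(\Psi(\rho))$ rather than via a naive appeal to Casselman's submodule theorem --- which, incidentally, would need $\pi$ to already be admissible or of finite length, so that step as you wrote it is not quite licit --- but in spirit your block-wise reduction is the same.) However, there are two genuine gaps.

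First, the cuspidal case, which you yourself flag as ``the main obstacle,'' is left unresolved, and this is exactly where the paper's essential new ideas live. The resolution (Lemmas \ref{G1}, \ref{CA} and \ref{fg}) is a commutative-algebra argument: one identifies the relevant module $\Hom_{G^1}(\rho,\Sc(G(F)/H(F)))$ over $\cO(\Psi_G)=\bC[L/L']$ (an integral Laurent polynomial ring) with a subspace of $\bC[L/L']^{\bN}$, shows it is the intersection of $\bC[L/L']^{\bN}$ with a \emph{finite-dimensional} $\bC(L/L')$-subspace, and then invokes the elementary fact (Lemma \ref{CA}) that such an intersection is always finitely generated over the Noetherian domain $\bC[L/L']$. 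The finite-dimensionality over the fraction field is where (P2) enters, but crucially in the strengthened form of Lemma \ref{G1}: one needs finite multiplicity of $\rho$ against \emph{all} characters of $\widetilde H$ trivial on $H(F)\cap G^1$, not just the trivial one, and that strengthening is obtained by twisting $\rho$ by unramified characters of $G(F)$. A transcendence-degree specialization argument (inside Lemma \ref{fg}) then reduces finite-dimensionality over $\bC(L/L')$ to the finite-multiplicity hypothesis at a single, generic character. This is precisely the mechanism that rules out the ``torsion tower'' you worry about; without it the proof does not close.

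Second, the descent of (P2) to the Levi pairs $(M,H_M)$ is not mere bookkeeping, and the sketch ``second adjointness plus (P1)'' skips a real homological point. After the geometric lemma one has a finite filtration of $(i_{GM}\sigma)|_{H(F)}$ (equivalently of $\overline r_{GM}(\Sc(G/H))$) with graded pieces $\Sc(Y_j,\cI_j)$, and one knows the $H(F)$-coinvariants of the \emph{whole} are finite-dimensional. To conclude that the coinvariants of an \emph{individual} graded piece (in particular the open-orbit piece giving $\sigma_{H_M(F)}$) are finite-dimensional, one must control $\oH_1(H(F),\cdot)$ of the other pieces; this is the content of Lemma \ref{LinAlg}, and the needed $\oH_1$ finiteness is supplied by Lemma \ref{FinDimH1H0}, which in turn uses the projectivity of cuspidals over $G^1$ (Harish-Chandra) together with Shapiro's lemma. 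Your proposal does not address this, and without it the induction on $\dim G$ does not get off the ground: finite multiplicity for $(G,H)$ simply does not formally imply finite multiplicity for $(M,H_M)$ without the $\oH_1$ input.
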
}


Assumption (\ref{IntFinMult}) is rather weak in light of the
results of \cite{Del,SV}. In particular, it holds for all
symmetric pairs over fields of characteristic different from 2.
One can easily show that the converse is also true. Namely, that if
$\Sc(G(F)/H(F))^{K}$ is a finitely generated module over the
Hecke algebra $\mathcal{H}_K(G(F))$ for any \Dima{compact open} 
subgroup
$K<G(F)$, then (\ref{IntFinMult}) holds.

\begin{remark*}
Theorem \ref{IntFinGen} implies that, if $\dim\Hom_{H(F)}(\rho|_{H(F)}, \bC)$ is finite, then it is bounded on every Bernstein component.
\end{remark*}

In the second part of the paper we introduce the notion of a
uniform spherical pair and prove for them the following analog of
Kazhdan's theorem.

\begin{introtheorem} \lbl{thm:ModIso}[See Theorem \ref{thm:phiModIso}]
\Dima{Let $H <G$ be reductive groups defined over
$\bZ$. Suppose that the pair $(G,H)$ is uniform spherical.} 
\Removed{ and the module
$\Sc(G(F)/H(F))^{K_{\ell}(F)}$ is finitely generated over the Hecke
algebra $\cH_{\ell}(G(F))$ for any $F$ and $l$.} 

Then for any $l$ there
exists $n$ such that for any $n$-close local fields $F$ and $E$,
the module $\Sc(G(F)/H(F))^{K_{\ell}(F)}$ over the algebra $\cH_{\ell}(G(F))$
is isomorphic to the module $\Sc(G(E)/H(E))^{K_{\ell}(E)}$ over the
algebra $\cH_{\ell}(G(E))$\nir{, where we identify $\cH_{\ell}(G(F))$ and $\cH_{\ell}(G(E))$ using Kazhdan's isomorphism.}
\end{introtheorem}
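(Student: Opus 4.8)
The plan is to reduce the statement to Kazhdan's isomorphism of Hecke algebras together with the finite-presentation statement of Theorem \ref{IntFinGen}. The key point is that a uniform spherical pair should, by definition, come equipped with compatible ``integral models'' of $G/H$ so that for $n$-close fields $F$ and $E$ one gets a canonical bijection between the relevant finite sets of orbits of $K_\ell$ on $G/H$, and this bijection is compatible with the Hecke-module structure up to the level of truncation governed by $n$. First I would recall that, by Theorem \ref{IntFinGen}, the module $\Sc(G(F)/H(F))^{K_\ell(F)}$ is finitely generated over $\cH_\ell(G(F))$; since $\cH_\ell(G(F))$ is Noetherian (being finitely generated over a Noetherian ring, or by Kazhdan's finite presentation), the module is in fact finitely presented. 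So it is determined by finitely many generators $v_1,\dots,v_r$ and finitely many relations $\sum_j h_{ij} v_j = 0$ with $h_{ij}\in\cH_\ell(G(F))$.

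Next I would make the generators and relations ``uniform'': because the pair is uniform spherical, one can choose the $v_i$ to be characteristic functions of $K_\ell(F)$-orbits of a fixed combinatorial type that makes sense simultaneously over $F$ and over $E$, and one can choose the coefficients $h_{ij}$ to be supported on a bounded number of double cosets $K_\ell(F)\backslash G(F)/K_\ell(F)$, say within ``radius $N$''. The action of $\cH_\ell(G(F))$ on $\Sc(G(F)/H(F))^{K_\ell(F)}$, restricted to generators of bounded support and Hecke elements of bounded support, is encoded by finitely many structure constants, each of which is computed from the multiplication in $O_F/\cP_F^{m}$ for $m$ large enough depending only on $\ell$ and $N$. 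The heart of the argument is therefore: choose $n$ so large that (i) Kazhdan's isomorphism $\cH_\ell(G(F))\cong\cH_\ell(G(E))$ holds, (ii) the chosen generating set and its bounded-support interactions with the Hecke algebra match on both sides under the induced identification of orbit spaces, and (iii) $n$ is large enough to also carry the relations. Then the assignment $v_i\mapsto v_i'$ extends to a surjection $\Sc(G(F)/H(F))^{K_\ell(F)}\to\Sc(G(E)/H(E))^{K_\ell(E)}$ of $\cH_\ell$-modules, and by symmetry in $F$ and $E$ (enlarging $n$ if necessary) the inverse assignment also gives a surjection; a surjection with a one-sided inverse between finitely presented modules over a Noetherian ring is an isomorphism, which finishes the proof.

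The main obstacle I anticipate is step (ii)–(iii): making precise the claim that for $n$-close fields the ``bounded part'' of the Hecke action on $\Sc(G/H)$ depends only on $O_F/\cP_F^n$. This requires a careful analysis of how $K_\ell$-orbits on $G(F)/H(F)$ of bounded size are parametrized — essentially a relative analogue of the Moy–Prasad / Cartan-decomposition bookkeeping used in \cite{Kaz} — and a verification that the integration defining $h*v$ only sees the reduction mod $\cP_F^n$ of the data. The definition of ``uniform spherical pair'' must be exactly what is needed to run this: it should guarantee a common $\bZ$- (or $O$-) model with the property that $G(F)/H(F)$ with its $K_\ell(F)$-action admits a ``level-$n$ truncation'' functorial in the residue ring. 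A secondary, more routine point is checking that finitely-generated over $\cH_\ell(G(F))$ really does give finitely presented: this follows from finite generation of $\cH_\ell(G(F))$ as an algebra (Kazhdan) plus the Hilbert basis / Noetherianity argument, but one should note the Hecke algebra is not commutative, so one uses that it is (left and right) Noetherian, which holds here because it is a finite module over its center, or directly because all finitely generated modules over it are finitely presented by the same orbit-counting finiteness. Once the uniform-truncation lemma is in place, the rest is the standard ``transport of finite presentation'' argument and should be short.
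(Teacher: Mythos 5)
Your overall plan — transport the finite presentation of $\cM_\ell(G(F)/H(F))$ across Kazhdan's isomorphism by matching $K_\ell$-orbits on $G/H$ and matching the bounded-support structure constants of the Hecke action for sufficiently close residue rings — is indeed the paper's strategy (this is precisely the role of Lemma \ref{lem:rough.bijection}, Lemma \ref{lem:equal.stabilizers}, Corollary \ref{cor:LinIso}, Corollary \ref{cor:CharFun}, and Proposition \ref{prop:local.isom}). But there is one genuine gap in your argument: you justify finite generation of $\Sc(G(F)/H(F))^{K_\ell(F)}$ by citing Theorem \ref{IntFinGen}, and that theorem requires the hypothesis $\dim\Hom_{H(F)}(\rho|_{H(F)},\bC)<\infty$ for all irreducible smooth $\rho$ of $G(F)$. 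That hypothesis does \emph{not} appear among the assumptions of Theorem \ref{thm:ModIso} — the only assumption is that $(G,H)$ is a uniform spherical pair. So as written your proof needs an assumption that the theorem does not grant. The paper instead establishes finite generation directly from condition (4) of Definition \ref{defn:good.pair} (the condition $K_\ell\pi^\alpha K_\ell x = K_\ell\pi^\alpha x$): this is Proposition \ref{prop:FG}, which shows the Haar measures $1_{K_\ell x}$, $x\in\fX$, generate the module without any multiplicity-finiteness input. (The introduction even has a remark noting exactly this: the finite-generation hypothesis was originally included in the theorem and was later found to be redundant given the definition.)

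A secondary, more cosmetic point is the endgame. You propose showing the map is an isomorphism by symmetry plus a one-sided inverse. The paper instead first constructs the canonical \emph{linear} isomorphism $\Phi_{\cM,\ell}$ via the bijection of orbit spaces, then defines the Hecke-module map $\Phi'$ on generators by transport of presentation, and then checks $\Phi'=\Phi_{\cM,\ell}$ on all $1_{K_\ell(F)x}$, $x\in\De$, again using condition (4) to reduce to the generators in $\fX$. Since $\Phi_{\cM,\ell}$ is already a linear bijection, it follows that it is a module isomorphism. Your symmetry argument is likely salvageable, but one should be careful that the two maps one builds are actually mutually inverse rather than merely both surjective; the paper sidesteps this by anchoring everything to the canonical $\Phi_{\cM,\ell}$. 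Apart from the finite-generation source, the rest of your proposal is in line with the paper.
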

\Dima{In fact, we prove a more general theorem, see \S \ref{sec:UniSPairs}.}
\Removed{ Together with Theorem \ref{IntFinGen} }
This implies the following
corollary.
\begin{introcorollary}\lbl{cor:GelGel}
Let $(G,H)$ be a uniform spherical pair of reductive groups
defined over $\bZ$. Suppose that

\begin{itemize}
\item For any local field $F$, and any parabolic subgroup $P \subset G$, there is a finite number
of double cosets $P(F) \setminus G(F) / H(F)$.
\Removed{ \item For any local field $F$ and any irreducible smooth representation $\rho$ of $G(F)$ we have
 $$\dim\Hom_{H(F)}(\rho|_{H(F)}, \bC) < \infty .$$ }
\item For any local field $F$ of characteristic zero the pair $(G(F),H(F))$ is a Gelfand pair, i.e. for
any irreducible smooth representation $\rho$ of $G(F)$ we have
$$\dim\Hom_{H(F)}(\rho|_{H(F)}, \bC) \leq 1 .$$
\end{itemize}
Then for any local field $F$ the pair $(G(F),H(F))$ is a  Gelfand
pair.
\end{introcorollary}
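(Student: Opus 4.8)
We may assume $\charac F=p>0$, the case $\charac F=0$ being part of the hypothesis. Fix an irreducible smooth representation $\rho$ of $G(F)$; the goal is to show $\dim\Hom_{H(F)}(\rho|_{H(F)},\bC)\le 1$. Since $\rho$ is nonzero and smooth, $\rho^{K_\ell(F)}\ne 0$ for some $\ell$, which we fix. The heart of the argument will be a dictionary, valid over any local field $F'$: for every irreducible smooth representation $\rho'$ of $G(F')$ with $(\rho')^{K_\ell(F')}\ne 0$,
\[
\dim\Hom_{H(F')}(\rho'|_{H(F')},\bC)=\dim\Hom_{\cH_\ell(G(F'))}\!\bigl(\Sc(G(F')/H(F'))^{K_\ell(F')},\ \widetilde{\rho'}^{\,K_\ell(F')}\bigr).
\]

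I would prove this dictionary in two steps. First, identify $\Sc(G(F')/H(F'))$ with the compact induction $\operatorname{ind}_{H(F')}^{G(F')}\bC$; since reductive groups are unimodular its smooth dual is the full induction $\operatorname{Ind}_{H(F')}^{G(F')}\bC$, so Frobenius reciprocity together with the tautological identification $\Hom_{G(F')}(\rho',\widetilde M)\cong\Hom_{G(F')}(M,\widetilde{\rho'})$ (valid for every smooth $M$) gives $\Hom_{H(F')}(\rho'|_{H(F')},\bC)\cong\Hom_{G(F')}(\Sc(G(F')/H(F')),\widetilde{\rho'})$. Second, the functor of $K_\ell(F')$-invariants maps this Hom-space into $\Hom_{\cH_\ell(G(F'))}(\Sc(G(F')/H(F'))^{K_\ell(F')},\widetilde{\rho'}^{\,K_\ell(F')})$; this map is injective because a $G(F')$-morphism vanishing on $K_\ell$-invariants has image a subrepresentation of the irreducible $\widetilde{\rho'}$ with no $K_\ell$-invariants, hence zero, and it is surjective because the counit $\cH(G(F'))e_{K_\ell}\otimes_{\cH_\ell(G(F'))}V^{K_\ell}\to V$ has kernel with vanishing $K_\ell$-invariants, so any $\cH_\ell(G(F'))$-morphism into $\widetilde{\rho'}^{\,K_\ell(F')}$ factors (using once more that $\widetilde{\rho'}$ has no nonzero subrepresentation without $K_\ell$-invariants) through a $G(F')$-morphism into $\widetilde{\rho'}$.

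Now the transfer. Being a Gelfand pair, $(G(E),H(E))$ in particular has finite multiplicities for every local field $E$ of characteristic zero, so Theorem \ref{IntFinGen}, whose double-coset hypothesis is assumed for all local fields, shows that $\Sc(G(E)/H(E))^{K_\ell(E)}$ is a finitely generated $\cH_\ell(G(E))$-module for all such $E$. Choose such an $E$ that is $n$-close to $F$, with $n=n(\ell)$ as furnished by Theorem \ref{thm:ModIso}; that theorem, via Kazhdan's isomorphism $\cH_\ell(G(F))\cong\cH_\ell(G(E))$, yields an isomorphism of modules $\Sc(G(F)/H(F))^{K_\ell(F)}\cong\Sc(G(E)/H(E))^{K_\ell(E)}$. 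The module $\widetilde\rho^{\,K_\ell(F)}$ is a simple $\cH_\ell(G(F))$-module; under Kazhdan's isomorphism it corresponds to a simple $\cH_\ell(G(E))$-module, which — every simple Hecke module being the $K_\ell$-invariants of an irreducible smooth representation, and the contragredient of such being again irreducible — is of the form $\widetilde{\rho_E}^{\,K_\ell(E)}$ for some irreducible smooth representation $\rho_E$ of $G(E)$. Transporting the Hom-space of the dictionary over $F$ along these isomorphisms and applying the dictionary over $E$ then gives
\[
\dim\Hom_{H(F)}(\rho|_{H(F)},\bC)=\dim\Hom_{H(E)}(\rho_E|_{H(E)},\bC)\le 1,
\]
the last inequality being the characteristic-zero Gelfand hypothesis; as $\rho$ was arbitrary, $(G(F),H(F))$ is a Gelfand pair.

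The main obstacle I anticipate is the dictionary lemma, and inside it the surjectivity of the passage to $K_\ell$-invariants — the subtlety being that $\Sc(G(F)/H(F))^{K_\ell(F)}$ need not generate $\Sc(G(F)/H(F))$ as a $G(F)$-representation — together with the fact that every simple $\cH_\ell$-module comes from an irreducible smooth representation, which is precisely what lets Kazhdan's isomorphism carry representation-theoretic content between $F$ and $E$. Theorem \ref{IntFinGen} enters only to supply the finite generation required to invoke Theorem \ref{thm:ModIso}.
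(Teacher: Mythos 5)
Your overall strategy is the paper's: reformulate the Gelfand property for $(G(F),H(F))$ in terms of $\Hom$-spaces out of the Hecke module $\Sc(G(F)/H(F))^{K_\ell}$ into simple $\cH_\ell$-modules, transport that module to a close characteristic-zero field via Theorem~\ref{thm:ModIso}, and conclude. This is exactly the chain Theorem~\ref{thm:phiModIso} $+$ finite generation $+$ the unnamed lemma at the end of~\S\ref{subsec:CloseLocalFields}. So the route is not different; the question is whether your version of that last lemma (your ``dictionary'') is proved.

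There is a genuine gap precisely where you anticipate one, and the sentence you offer does not close it. Writing $M=\Sc(G(F')/H(F'))$, $W=\widetilde{\rho'}$, your argument for surjectivity of
$\Hom_{G(F')}(M,W)\to\Hom_{\cH_\ell}(M^{K_\ell},W^{K_\ell})$
takes an $\cH_\ell$-map $\psi$, feeds it through $\cH e_{K_\ell}\otimes_{\cH_\ell}M^{K_\ell}\to W$, and observes that this kills the kernel of the counit. That is correct, but the counit's \emph{image} is only the subrepresentation $M'\subset M$ generated by $M^{K_\ell}$, so you have produced a $G$-map $M'\to W$, not $M\to W$. The inclusion $\Hom_G(M,W)\hookrightarrow\Hom_G(M',W)$ need not be onto: if there exists a nonsplit extension $0\to W\to X\to\sigma\to 0$ with $\sigma^{K_\ell}=0$, the identity on $W^{K_\ell}$ lifts to a $G$-map from the submodule of $X$ generated by $X^{K_\ell}$ (which is $W$) but not from $X$. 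Nothing in ``$\rho^{K_\ell}\ne 0$'' rules out such extensions for small $\ell$. The paper avoids this by invoking statement~(\ref{1}) of~\S\ref{subsec:prel}: for $\ell$ large enough, $K_\ell$ is a \emph{splitting} subgroup, so $\cM(G(F'))=\cM(G,K_\ell)\oplus\cM(G,K_\ell)^{\bot}$ and $M=M'\oplus M''$ with $M''$ having no $K_\ell$-invariants, whence $\Hom_G(M,W)=\Hom_G(M',W)=\Hom_{\cH_\ell}(M^{K_\ell},W^{K_\ell})$. The fix for your proof is therefore easy but necessary: choose $\ell$ not merely so that $\rho^{K_\ell}\ne 0$ but also large enough that $K_\ell$ is splitting (possible since this only enlarges the invariants). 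With that modification, your argument is essentially the paper's; the appeal to Theorem~\ref{IntFinGen} for finite generation is logically fine but, in the paper's final formulation of Theorem~\ref{thm:ModIso}, redundant, since finite generation is extracted from uniform sphericality directly (Proposition~\ref{prop:FG}).
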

\Dima{In fact, we prove a more general theorem, see \S \ref{sec:UniSPairs}.}

\begin{remark*}
In a similar way one can deduce an analogous corollary for cuspidal representations. Namely, suppose that the first two conditions of the last corollary hold and the third condition holds for all cuspidal representations $\rho$. Then for any local field $F$ the pair $(G(F),H(F))$ is a cuspidal Gelfand pair: for any irreducible smooth cuspidal representation $\rho$ of $G(F)$ we have
$$\dim\Hom_{H(F)}(\rho|_{H(F)}, \bC) \leq 1 .$$
\end{remark*}

\DimaGRCor{
\begin{remark*}
Originally, we included in the formulation of Theorem  \ref{thm:ModIso} an extra condition: we demanded that the module
$\Sc(G(F)/H(F))^{K_{\ell}(F)}$ is finitely generated over the Hecke
algebra $\cH_{\ell}(G(F))$ for any $F$ and $l$. This was our original motivation for Theorem \ref{IntFinGen}.
Later we realized that this condition just follows from the definition of uniform spherical pair. However, we think that  Theorem  \ref{IntFinGen}  and the technique we use in its proof have importance of their own.
\end{remark*}}

In the last part of the paper we apply our technique to show
that $(\GL_{n+1},\GL_n)$ is a strong Gelfand pair over any local
field and $(\GL_{n+k},\GL_n \times \GL_k)$ is a Gelfand pair over
any local field of odd characteristic.

\begin{introtheorem} \lbl{thm:Mult1}
Let $F$ be any local field. Then $(\GL_{n+1}(F),\GL_n(F))$ is a
strong Gelfand pair, i.e. for any irreducible smooth
representations $\pi$ of $\GL_{n+1}(F)$ and $\tau$ of $\GL_{n}(F)$
we have
$$\dim \Hom_{\GL_{n}(F)} (\pi, \tau) \leq 1.$$
\end{introtheorem}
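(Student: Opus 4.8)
The plan is to deduce Theorem~\ref{thm:Mult1} from Corollary~\ref{cor:GelGel}, applied to the pair $(G,H)=(\GL_{n+1}\times\GL_n,\ \Delta\GL_n)$, where $\GL_n\hookrightarrow\GL_{n+1}$ is the standard upper-left block embedding and $\GL_n$ is then embedded diagonally in the product. The first step is the classical reformulation of a \emph{strong} Gelfand property as an ordinary Gelfand property for the product group. If $\pi$ is an irreducible smooth representation of $\GL_{n+1}(F)$ and $\tau$ one of $\GL_n(F)$, then $\tau$ is admissible, hence reflexive, and
$$\Hom_{\GL_n(F)}(\pi,\tau)\ \cong\ \Hom_{\GL_n(F)}\bigl(\pi\otimes\widetilde\tau,\ \bC\bigr)\ =\ \Hom_{H(F)}\bigl((\pi\boxtimes\widetilde\tau)|_{H(F)},\ \bC\bigr).$$
Since the irreducible smooth representations of $G(F)=\GL_{n+1}(F)\times\GL_n(F)$ are exactly the external tensor products $\pi\boxtimes\sigma$ of irreducibles, $(\GL_{n+1},\GL_n)$ is a strong Gelfand pair over $F$ if and only if $(G(F),H(F))$ is a Gelfand pair. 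So it is enough to verify the hypotheses of Corollary~\ref{cor:GelGel} for $(G,H)$: (i) $(G,H)$ is a uniform spherical pair of reductive groups over $\bZ$; (ii) for every local field $F$ and every parabolic $P\subseteq G$, the set $P(F)\backslash G(F)/H(F)$ is finite; and (iii) for every local field $F$ of characteristic zero, $(G(F),H(F))$ is a Gelfand pair.

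Condition (iii) is, by the equivalence just recalled, exactly the known fact that $(\GL_{n+1},\GL_n)$ is a strong Gelfand pair over local fields of characteristic zero: the non-Archimedean case is the theorem of Aizenbud--Gourevitch--Rallis--Schiffmann, and the Archimedean case is due to Aizenbud--Gourevitch and to Sun--Zhu. Nothing new is needed here.

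For condition (ii) I would argue geometrically, in a way independent of $F$. A parabolic of $G$ is a product $P_1\times P_2$ of parabolics of the two factors, and $P(F)\backslash G(F)/H(F)$ is the set of $\GL_n(F)$-orbits on the product of partial flag varieties $(\GL_{n+1}/P_1)(F)\times(\GL_n/P_2)(F)$. Over an algebraic closure the pair $(G,H)$ is spherical, so there are finitely many geometric orbits; to pass to $F$-rational points one observes that inside one geometric orbit the $\GL_n(F)$-orbits are parametrised by a subset of a Galois cohomology set $H^1(F,S)$, with $S$ the geometric stabiliser, and that $S$ is built from general linear groups, a unipotent radical and a central torus, so this set is trivial by Hilbert~90. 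I expect, in fact, an explicit combinatorial parametrisation of these orbits (a relative Bruhat decomposition for $(\GL_{n+1}\times\GL_n,\Delta\GL_n)$ in the spirit of the vector-versus-flag linear algebra that underlies the characteristic-zero argument) which makes both the finiteness and the independence of $F$ transparent at once; essentially the same parametrisation is what is required for (i).

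Condition (i), that $(G,H)$ is a uniform spherical pair in the sense of \S\ref{sec:UniSPairs}, is where the genuine work lies and is the main obstacle. Unwinding the definition, one must produce --- compatibly with reduction modulo $\cP^m$ and hence uniformly in the residue characteristic --- the structural data attached to the $B$-orbit decomposition of $G/H$: a distinguished open piece with a good $\bZ$-model together with inductive control of its complement, or the equivalent data used in Theorem~\ref{thm:ModIso}. The case of $\GL_{n+1}\times\GL_n$ is the benign one, because the $\Delta\GL_n$-action on $\GL_{n+1}$ and on the relevant flag varieties is governed by elementary invariants, and one can write down the required integral model directly from that description. Once (i)--(iii) are established, Corollary~\ref{cor:GelGel} gives the Gelfand property of $(G(F),H(F))$ for every local field $F$, and the equivalence of the first paragraph then yields Theorem~\ref{thm:Mult1}.
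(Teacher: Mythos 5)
Your reduction of the strong Gelfand property for $(\GL_{n+1},\GL_n)$ to the Gelfand property of $(G,H)=(\GL_{n+1}\times\GL_n,\Delta\GL_n)$, and the plan to feed $(G,H)$ into Corollary~\ref{cor:GelGel} with the characteristic-zero input from \cite{AGRS}, is exactly the paper's strategy. But there is a genuine gap where you yourself say ``the genuine work lies'': you never verify that $(G,H)$ is a uniform spherical pair, which is the entire technical content the paper supplies in Proposition~\ref{prop:good.pair.GLn+1xGLn}. Worse, your description of what that verification entails is off-target. Definition~\ref{defn:good.pair} asks for a relative \emph{Cartan} decomposition --- a split torus $T$, an affine embedding of $G/H$, a finite set $\fX\subset(G/H)(R)$ and a subset $\Upsilon\subset X_*(T)$ so that $\pi^\Upsilon\fX$ surjects onto $K_0(F)\backslash G(F)/H(F)$ uniformly in $F$, together with $R$-smoothness of the transporter schemes $S_{x,y}$ and two further uniformity conditions --- not a ``distinguished open piece'' or a $B$-orbit/Bruhat stratification of $G/H$. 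In the paper this is established via Lemma~\ref{lem:singular.values} (a Smith-normal-form-type normal form for the pair), Lemma~\ref{lem:SmoothCrit}, and Lemma~\ref{lem:GRCrit}. Without this verification Corollary~\ref{cor:GelGel} does not apply and nothing has been proved.

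On condition (ii), your Galois-cohomology sketch is also a different and less complete route than the paper's. Proposition~\ref{peop:F.sph} sidesteps $H^1$ entirely: since $\Delta\GL_n$ and every parabolic of $G$ have affine-linear closures inside $\M_{n+1}\times\M_n$, the transporter $\{(a,b):axb=y\}$ is cut out by linear equations, and an elementary Zariski-density argument produces an $F$-rational point in each nonempty geometric orbit, so geometric and $F$-rational orbits coincide. Your $H^1$ approach would additionally need a base $F$-point in each geometric orbit before twisting can start, and it is the fibers of $H^1(F,S)\to H^1(F,H)$, not $H^1(F,S)$ alone, that parametrize $F$-orbits; neither point is addressed. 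The paper's linear-algebra argument is strictly simpler and avoids both issues.
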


\begin{introtheorem} \lbl{thm:UniLinPer}
Let $F$ be any local field. Suppose that $\charac F \neq 2$. Then
$(\GL_{n+k}\Dima{(F)},\GL_n\Dima{(F)} \times \GL_k\Dima{(F)})$ is a Gelfand pair.
\end{introtheorem}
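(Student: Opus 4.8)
The plan is to deduce the theorem from Corollary \ref{cor:GelGel}, applied to the pair $(G,H)=(\GL_{n+k},\GL_n\times\GL_k)$, both viewed as split reductive group schemes over $\bZ$ with $H$ embedded block-diagonally. Three hypotheses must be verified: that $(G,H)$ is a uniform spherical pair; that for every local field $F$ and every parabolic $P\subset G$ the set of double cosets $P(F)\backslash G(F)/H(F)$ is finite; and that over every local field of characteristic zero $(G(F),H(F))$ is a Gelfand pair. Once these are in place, Theorem \ref{thm:ModIso} transports the characteristic-zero multiplicity-one statement to any sufficiently close local field of positive characteristic, and since for every prime $p$, every local field of characteristic $p$ and every $\ell$ there is a characteristic-zero local field $n$-close to it, Corollary \ref{cor:GelGel} yields the conclusion for all local fields of characteristic $\neq 2$.

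For the characteristic-zero Gelfand property I would appeal to the literature: over non-archimedean $F$ this is the uniqueness-of-linear-periods theorem of Jacquet and Rallis, and over archimedean $F$ it follows from the work of Aizenbud and Gourevitch. In both cases one verifies the Gelfand--Kazhdan criterion using the anti-involution $g\mapsto{}^tg^{-1}$, suitably conjugated, which preserves $H(F)$ and acts trivially on the space of $(H(F)\times H(F))$-invariant distributions on $G(F)$. This is also where the hypothesis $\charac F\neq2$ enters on the representation-theoretic side: it is exactly what makes $\theta=\mathrm{Ad}(\mathrm{diag}(I_n,-I_k))$ an involution with a well-behaved fixed-point theory, i.e.\ makes $(G,H)$ a symmetric pair in the usual sense (compare the remarks after Theorem \ref{IntFinGen}).

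For the finiteness of double cosets the point is that $(G,H)$ is spherical. Over an algebraically closed field, identify $G/H$ with the variety of pairs $(V_1,V_2)$ of complementary subspaces of dimensions $n$ and $k$; a Borel subgroup of $\GL_{n+k}$ then has finitely many orbits on $G/H$ (indexed by the relative position of $V_1,V_2$ with respect to a fixed full flag), hence so has every parabolic $P$. To descend to $F$-points, one observes that the $F$-rational $P(F)$-orbits inside a fixed geometric $P$-orbit are governed by a Galois-cohomology set of the stabilizer, and for $\GL$ these stabilizers are connected — extensions of products of general linear groups by unipotent groups — so each geometric double coset contributes exactly one rational one. The resulting bound is independent of $F$, so it holds in every characteristic. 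I would make this explicit through the combinatorial parametrization of $P\backslash G/H$, which also disposes of the positive-characteristic case directly.

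The step I expect to be the main obstacle — and where the bulk of the work lies — is checking that $(G,H)$ is a \emph{uniform spherical pair} in the precise sense of \S\ref{sec:UniSPairs}. One must produce, over $\bZ[1/2]$, the data required by that definition: a smooth affine model of $G/H$ together with a stratification of its natural compactification whose strata and closure relations are defined over $\bZ$ and do not depend on the field, so that the filtration of $\Sc(G(F)/H(F))^{K_\ell(F)}$ used in Theorem \ref{thm:ModIso} has the same shape over all $n$-close fields. For $(\GL_{n+k},\GL_n\times\GL_k)$ the relevant geometry — pairs of subspaces stratified by the dimension of their intersection, with closure relations given by a poset independent of the field — is manifestly defined over $\bZ[1/2]$, and I would verify the axioms of a uniform spherical pair against this model. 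Granting this, Theorem \ref{thm:ModIso} and Corollary \ref{cor:GelGel} complete the argument.
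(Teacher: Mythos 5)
Your high-level strategy matches the paper's: reduce to Corollary \ref{cor:GelGel}, quote Jacquet--Rallis \cite{JR} for the characteristic-zero case, then verify that $(\GL_{n+k},\GL_n\times\GL_k)$ is $F$-spherical for all $F$ and uniform spherical over a suitable coefficient ring. For $F$-sphericity you propose a Galois-cohomology descent from geometric orbits, asserting that the stabilizers are extensions of $\GL$-products by unipotent groups; this is plausible but left unjustified, and it is not what the paper does. The paper instead uses Proposition \ref{peop:F.sph}, which exploits that the closures of $H$ and of any parabolic in the ambient matrix algebra are affine subspaces: non-emptiness of a transporter variety over $\overline F$ then forces an $F$-point by a linear-algebra (density of a non-vanishing locus) argument, so each geometric $(P\times H)$-orbit contributes exactly one rational orbit with no cohomology whatsoever. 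Your route could likely be completed, but the paper's is more elementary and uniform across characteristics.

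The genuine gap is in the uniform-sphericity step, which you correctly flag as the main obstacle but then characterize incorrectly. You describe the data required as a field-independent stratification of a compactification of $G/H$, intended to control a filtration of $\Sc(G(F)/H(F))^{K_\ell(F)}$. Definition \ref{defn:good.pair} asks for something quite different: a field-uniform \emph{relative Cartan decomposition}. One must produce an $R$-split torus $T\subset G$, an affine embedding $G/H\hookrightarrow\bA^N$, a \emph{finite} set $\fX\subset(G/H)(R)$ and a cone $\Upsilon\subset X_*(T)$ such that $\pi^\Upsilon\fX$ surjects onto $K_0(F)\backslash G(F)/H(F)$ for every $(R,\pi)$-local field $F$, such that for all $x,y\in\pi^\Upsilon\fX$ the closure in $G$ of the transporter scheme $\{g : gx=y\}$ is smooth over $R$, together with two further technical conditions on valuations and congruence subgroups. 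Theorem \ref{thm:ModIso} is then proved not by filtering along strata but by showing that the linear isomorphism determined by these uniform orbit representatives intertwines the Hecke actions on a finite presentation of the module. For the pair at hand this is Proposition \ref{prop:JR}: one takes $\fX=\{x_0\}$ with
\[
x_0=\begin{pmatrix} I_k & 0 & I_k\\ 0 & I_{n-k} & 0\\ 0 & 0 & I_k\end{pmatrix}H
\]
and $\Upsilon$ the anti-dominant coweights $(\mu_1,\ldots,\mu_k,0,\ldots,0)$ with $\mu_1\le\cdots\le\mu_k\le 0$; surjectivity is a direct row/column reduction, and the smoothness of the transporter closures is an explicit computation fed into Lemma \ref{lem:SmoothCrit}. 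This computation is the heart of the proof, and the stratification picture you sketch neither produces it nor substitutes for it; as written, your proposal does not engage with the actual content of the uniform-sphericity axioms.
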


We deduce these theorems from the zero characteristic case, which
was proven in \cite{AGRS} and \cite{JR} respectively. The proofs
in \cite{AGRS} and \cite{JR} cannot be directly adapted to the
case of positive characteristic since they rely on Jordan
decomposition which is problematic in positive characteristic, local fields of positive characteristic being non-perfect.

\begin{remark*}
In \cite{AGS}, a special case of  Theorem \ref{thm:Mult1} was
proven for all local fields; namely the case when $\tau$ is
one-dimensional.
\end{remark*}

\begin{remark*}
In \cite{AG_AMOT} and (independently) in \cite{SZ}, an analog of
Theorem \ref{thm:Mult1} was proven for Archimedean local fields.
In \cite{AG_HC}, an analog of Theorem \ref{thm:UniLinPer} was
proven for Archimedean local fields.
\end{remark*}

\subsection{Structure of the paper}$ $

\Dima{In Section \ref{sec:PrelNot} we introduce notation and give some general preliminaries.}\Rami{\\}

In Section \ref{sec:FinGen} we prove Theorem \ref{IntFinGen}.

In Subsection \ref{subsec:prel} we collect a few general facts for the proof. One is a criterion, due to Bernstein, for finite generation of the space of $K$-invariant vectors in a representation of a reductive group $G$; the other facts concern homologies of $l$-groups. In Subsection \ref{subsec:desc.cusp} we prove the main inductive step in the proof of Theorem \ref{IntFinGen}, and in Subsection \ref{SecPfFinGen} we prove Theorem \ref{IntFinGen}. Subsection \ref{subsec:homologies} is devoted to the proofs of some facts about the homologies of $l$-groups.\\

In Section \ref{sec:UniSPairs} we prove Theorem \ref{thm:ModIso} and derive Corollary \ref{cor:GelGel}.

In Subsection \ref{subsec:UniSpairs} we introduce the notion of uniform spherical pair. In Subsection \ref{subsec:CloseLocalFields} we prove the theorem and the corollary.\\

We apply our results in Section \ref{sec:ap}. In Subsection \ref{subsec:JR} we prove that the pair $(\GL_{n+k}, \GL_n \times\GL_k )$ satisfies the assumptions of Corollary \ref{cor:GelGel} over fields of characteristic different from 2. In Subsections \ref{subsec:GL} and \ref{subsec:PfGood} we prove that the pair
$(\GL_{n+1}\times\GL_n , \Delta \GL_n)$ satisfies the assumptions of Corollary \ref{cor:GelGel}.
These facts imply Theorems  \ref{thm:Mult1} and \ref{thm:UniLinPer}.
\subsection{Acknowledgments}$ $

We thank {\bf Joseph Bernstein} for directing us to the paper
\cite{Kaz}.

We also thank {\bf Vladimir Berkovich}, {\bf Joseph Bernstein}, {\bf Pierre Deligne},
{\bf Patrick Delorme}, {\bf Jochen Heinloth}, \RGRCor{{\bf Anthony Joseph,}}
{\bf David Kazhdan} ,{\bf Yiannis
Sakelaridis}, and {\bf Eitan Sayag} for fruitful discussions and the referee for many useful remarks.

A.A. was supported by a BSF grant, a GIF grant, an ISF Center
of excellency grant and ISF grant No. 583/09.
N.A. was supported by NSF grant DMS-0901638.
D.G. was supported by NSF grant DMS-0635607. Any opinions, findings and conclusions or recommendations expressed in this material are those of the authors and do not necessarily reflect the views of the National Science Foundation.

\section{Preliminaries and notation} \lbl{sec:PrelNot}

\begin{defn}
 A local field is a locally compact complete non-discrete topological field. In this paper we will consider only non-Archimedean local fields. All such fields have discrete valuations.
\end{defn}

\begin{remark}
 Any local field of characteristic zero and residue  characteristic $p$ is a finite extension of the field $\bQ_p$ of $p$-adic numbers and any local field of characteristic $p$ is a finite extension of the field $\bF_p((t))$ of formal Laurent series over the field with $p$ elements.
\end{remark}

\begin{notn}
For a local field $F$ we denote by $\val_F$ its valuation, by $O_F$ the ring of integers and by $\cP_F$ its unique maximal ideal.
For an algebraic group $G$ defined over $O_F$ we denote by $K_{\ell}(G,F)$ the kernel of the (surjective) morphism $G(O_F)\to G(O_F/\cP_F^{\ell})$. If $\ell>0$ then we call  $K_{\ell}(G,F)$ the $\ell$-th congruence subgroup.
\end{notn}
\Rami{
We will use the terminology of $l$-spaces and $l$-groups introduced in \cite{BZ}. An $l$-space is a locally compact second countable totally disconnected topological space, an $l$-group is a $l$-space with a continuous group structure. For \Dima{further background} on $l$-spaces, $l$-groups and their representations we refer the reader to \cite{BZ}.}

\begin{notn}
Let $G$ be an $l$-group.
Denote by
 $\cM(G)$ the category of smooth \nir{complex} representations of $G$.

Define the functor of coinvariants $CI_G:\cM(G) \to Vect$ by
 $$CI_G(V):= V/(\Span\{v-gv\, | \, v\in V, \, g\in G\}).$$
Sometimes we will also denote $V_G:=CI_G(V)$.
\end{notn}

\begin{notn}
For an $l$-space $X$ we denote by
 $\Sc(X)$ the space of locally constant compactly supported complex valued functions on $X$
\Rami{. If X is an analytic variety over a non-Archimedean local field, we denote}
 by  $\cM(X)$ the space of locally constant compactly supported measures on $X$.
\end{notn}

\begin{notn}
For an $l$-group $G$ and an open compact subgroup $K$ we denote by
 $\cH(G,K)$ or $\cH_K(G)$ the Hecke algebra of $G$ w.r.t. $K$, i.e. the algebra of compactly supported measures on $G$ that are invariant w.r.t. both left and right multiplication by $K$.

For a local field $F$ and a reductive group $G$ defined over $O_F$ we will also denote $\cH_{\ell}(G(F)):=\cH_{K_{\ell}(G)}(G(F))$.
\end{notn}

\nir{\begin{notn}
By a reductive group over a ring $R$, we mean a smooth group scheme over $\Spec(R)$ all of whose geometric fibers are reductive and connected.
\end{notn}}

\section{Finite Generation of Hecke Modules} \lbl{sec:FinGen}

The goal of this section is to prove Theorem \ref{IntFinGen}.

In this section $F$ is a fixed (non-Archimedean) local field of
arbitrary characteristic. All the algebraic groups and algebraic
varieties that we consider in this section are defined over $F$. \nir{In particular, reductive means reductive over $F$}.

\nir{For the reader's convenience, we now give an overview of the argument. In Lemma \ref{VKFinGen} we present a criterion, due to Bernstein, for the finite generation of spaces of $K$-invariants. The proof of the criterion uses the theory of Bernstein Center. This condition is given in terms of all parabolic subgroups of $G$. We directly prove this condition when the parabolic is $G$ (this is Step 1 in the proof of  Theorem \ref{IntFinGen}). The case of general parabolic is reduced to the case where the parabolic is $G$. For this, the main step is to show that the assumptions of the theorem imply similar assumptions for the Levi components of the parabolic subgroups of $G$. This is proved in Lemma \ref{FinMultCuspDesc} by stratifying the space $G(F)/P(F)$ according to the $H(F)$-orbits inside it.
}
\Dima{In the proof of this lemma we use two homological tools: Lemma \ref{FinDimH1H0} that which gives a criterion for finite dimensionality of the first homology of a representation and
Lemma \ref{LemShap} which connects the homologies of a representation and of its induction.}

\subsection{Preliminaries} \lbl{subsec:prel}

\Dima{
\begin{notn}
 For $l$-groups $H<G$ we denote by $ind_H^G: \cM(H) \to \cM(G)$ the compactly supported induction functor and by $Ind_H^G: \cM(H) \to \cM(G)$ the full  induction functor. 
\end{notn}
}

\begin{defn}
\Dima{Let $G$ be a reductive group, let $P<G$ be a parabolic subgroup with unipotent radical $U$, and let $M:=P/U$.
Such $M$ is called a Levi subquotient of $G$.
Note that every representation of $M(F)$ can be considered as a  representation of $P(F)$ using the quotient morphism $P \twoheadrightarrow M$.
Define:
\begin{enumerate}
 \item The Jacquet functor $r_{GM}:\cM(G(F)) \to \cM(M(F))$ by $r_{GM}(\pi):=(\pi|_{P(F)})_{U(F)}$.
 \item The parabolic induction functor $i_{GM}:\cM(M(F)) \to \cM(G(F))$ by $i_{GM}(\tau):=ind_{P(F)}^{G(F)}(\tau)$. 
\end{enumerate}
Note that 
$i_{GM}$ is right adjoint to $r_{GM}$.
A representation $\pi$ of $G(F)$ is called cuspidal if $r_{GM}(\pi)=0$ for any Levi subquotient $M$ of $G$.}
\end{defn}

\begin{definition}
Let $G$ be an $l$-group. A smooth representation $V$ of $G$ is
called \textbf{compact} if for any $v \in V$ and $\xi \in
\widetilde{V}$ the matrix coefficient function defined by
$m_{v,\xi}(g):= \xi(gv)$ is a compactly supported function on $G$.
\end{definition}

\begin{theorem}[Bernstein-Zelevinsky]\lbl{CompProj}
Let $G$ be an $l$-group. Then any compact representation of $G$ is
a projective object in the category $\cM(G)$.
\end{theorem}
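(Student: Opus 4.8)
The plan is to realize any compact representation $(\pi,V)$ as a direct summand of a manifestly projective representation. The basic projective building blocks are the representations $\Sc(G/K)=\operatorname{ind}_K^G(\mathbf 1)$ for $K<G$ compact open: by Frobenius reciprocity one has $\Hom_G(\Sc(G/K),W)\cong W^K=e_K\cdot W$, where $e_K$ is the normalized Haar measure of $K$; since $e_K$ is idempotent and we are in characteristic $0$, the functor $W\mapsto W^K$ is exact, so $\Sc(G/K)$ is projective, and hence so is every direct sum $\bigoplus_i\Sc(G/K_i)$. It therefore suffices to exhibit $V$ as a retract of such a sum.

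The retraction is built from matrix coefficients, and this is the only place the compactness hypothesis is used. For $v\in V$ and $\xi\in\widetilde V$ set $m_{v,\xi}(g):=\langle\pi(g^{-1})v,\xi\rangle$. Smoothness of $V$ and $\widetilde V$ makes this function locally constant, and it is compactly supported \emph{precisely because $V$ is compact}; thus $m_{v,\xi}\in\Sc(G)$, and if moreover $\xi$ is fixed by $K$ then $m_{v,\xi}\in\Sc(G/K)$. For fixed $\xi$ the assignment $v\mapsto m_{v,\xi}$ intertwines $\pi$ with left translation on $\Sc(G/K)$, while for $w\in V^K$ the assignment $f\mapsto\pi(f)w:=\int_G f(g)\pi(g)w\,dg$ is a $G$-equivariant map $\Sc(G/K)\to V$ (the integral converges since $f$ is compactly supported). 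Composing, one builds $G$-maps $s\colon V\to\bigoplus_i\Sc(G/K_i)$ out of the $m_{v,\xi_i}$ and $\alpha\colon\bigoplus_i\Sc(G/K_i)\to V$ out of the $w_i$, and the point is that, for a suitable choice of $w_i,\xi_i$ and suitable normalizations of Haar measure, $\alpha\circ s=\mathrm{id}_V$; this makes $V$ a direct summand of a projective, hence projective. The identity $\alpha\circ s=\mathrm{id}_V$ reduces, after pairing against $\widetilde V$, to the Schur orthogonality relations for matrix coefficients — relations which themselves hold for irreducible compact representations exactly because those matrix coefficients are compactly supported.

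The main obstacle is the bookkeeping when $V$ is not finitely generated: one cannot in general resolve $\mathrm{id}_{V^K}$ as a locally finite sum $\sum_i\langle-,\xi_i\rangle w_i$ with $w_i\in V^K$ and $\xi_i\in\widetilde V^K$, because for non-admissible $V$ the pairing between $V^K$ and $\widetilde V^K$, while non-degenerate, need not be perfect. The clean way around this is to first decompose $V$ into a direct sum of irreducible — hence finitely generated and admissible — subrepresentations (a ``relative Peter--Weyl'' phenomenon: compact representations are semisimple, which again follows from the compact support of matrix coefficients together with the fact that compactness passes to subquotients), and then to run the previous paragraph on each summand, where $V^K$ is finite-dimensional, genuine dual bases $w_i,\xi_i$ exist in $V^K$ and $\widetilde V^K$, and pinning down the scalar in $\alpha\circ s=\mathrm{id}$ (the formal degree divided by $\dim V^K$) is elementary. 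The same argument, dualized, shows that compact representations are also injective, though that is not needed here.
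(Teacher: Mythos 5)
The paper does not prove Theorem \ref{CompProj}; it is stated and attributed to Bernstein--Zelevinsky, so there is no in-paper proof to compare yours against. Measured on its own terms, your argument has a genuine gap at the reduction step: you invoke, as a lemma, that every compact representation is a direct sum of irreducible admissible subrepresentations, with only the parenthetical justification that semisimplicity ``follows from the compact support of matrix coefficients together with the fact that compactness passes to subquotients.'' That sentence does not produce the required splitting, and in the standard development of the theory semisimplicity of compact representations is \emph{deduced} from the fact that they are projective and injective (a subrepresentation of a compact $V$ is compact, hence injective, hence a direct summand), which is precisely what you are trying to prove; so your route runs a real risk of circularity. The admissibility of an irreducible compact representation, also invoked, is likewise a nontrivial theorem rather than a remark, and the nondegeneracy of the Schur orthogonality pairing (the nonvanishing of the formal degree) that you use to pin down $\alpha\circ s=\mathrm{id}$ needs an argument as well.

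A way to see that the difficulty is not merely expository: you correctly note that the non-admissible case is the obstruction to writing $\mathrm{id}_{V^K}$ as a locally finite sum over dual bases, but the fix you propose (decompose into irreducibles first) just relocates that obstruction into the unproved semisimplicity lemma. To carry out your strategy honestly one must first show, for compact $\pi$, that for every $v\in V$ and compact open $K$ the set $\{g\in G : e_K\pi(g^{-1})v\ne 0\}$ is compact (this uses that $\widetilde V^K$ is not a countable union of proper subspaces), then deduce admissibility for finitely generated compact representations, then build an invariant positive-definite form to get semisimplicity, and only then run the dual-basis argument on each irreducible piece. That is workable but at least as long as the direct Bernstein--Zelevinsky argument, which lifts $\alpha:\pi\to U$ across a surjection $W\twoheadrightarrow U$ by integrating a preimage of $\alpha(v_0)$ against the compactly supported coefficients $m_{\cdot,\xi_0}$, with no prior decomposition of $\pi$ into irreducibles and no admissibility hypothesis. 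As it stands, the load-bearing steps of your proof are asserted rather than proved.
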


\begin{definition}
Let $G$ be a reductive group. \\
(i) Denote by $G^1$ the preimage in $G(F)$ of the maximal compact
subgroup of $G(F)/[G,G](F)$.\\
(ii) Denote $G_0:=G^1Z(G(F))$.\\
(iii) A complex character of $G(F)$ is called unramified if it is trivial
on $G^1$. We denote the \Dima{set} of all unramified
characters by $\Psi_G$. \Dima{Note that $G(F)/G^1$ is a lattice and therefore we can identify $\Psi_G$ with $(\bC^{\times})^n$. This defines a structure of algebraic variety on $\Psi_G$.}\\
(iv) \Dima{For any smooth representation $\rho$ of $G(F)$ we denote
$\Psi(\rho):= ind_{G^1}^G(\rho|_{G^1})$. Note that $\Psi(\rho) \simeq  \rho \otimes \cO(\Psi_G),$
where $G(F)$ acts only on the first factor, but this action depends on the  second factor.
This identification gives a structure of $\cO(\Psi_G)$-module on $\Psi(\rho)$.}
\end{definition}

\nir{\begin{rem} The definition of unramified characters above is not the standard one, but it is more convenient for our purposes.
\end{rem}}

\begin{theorem}[Harish-Chandra]\lbl{CuspComp}
Let $G$ be a reductive group and $V$ be a cuspidal representation
of $G(F)$. Then $V|_{G^1}$ is a compact representation of $G^1$.
\end{theorem}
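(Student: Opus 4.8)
The plan is to derive this from the classical form of Harish-Chandra's theorem, namely that the matrix coefficients of a cuspidal representation are compactly supported modulo the centre of $G(F)$, and then to reduce ``modulo the centre'' to ``on $G^1$'' by a purely formal argument about the structure of $G(F)$. First I would record a preliminary observation: the smooth dual of the $G^1$-representation $V|_{G^1}$ is again $\widetilde V$, because a functional on $V$ is stabilised by an open compact subgroup of $G^1$ if and only if it is stabilised by an open compact subgroup of $G(F)$ (the group $G^1$ is open in $G(F)$, hence contains arbitrarily small open compact subgroups of $G(F)$). Thus it suffices to prove that for every $v\in V$ and every $\xi\in\widetilde V$ the matrix coefficient $f(g):=\xi(\pi(g)v)$ has compact support after restriction to $G^1$.

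The input I would then quote is that $\operatorname{supp}(f)\subseteq C\cdot Z(G)(F)$ for some compact $C\subseteq G(F)$, where $Z(G)$ is the centre. This is the real content, and I would only sketch its proof: via the Cartan decomposition one reduces to bounding $f$ along the dominant cone of a maximal split torus; since $v,\xi$ are smooth and a maximal compact subgroup is compact, only finitely many matrix coefficients are involved, and any way of escaping to infinity inside that cone is, up to those finitely many data, a way of going deep ``towards'' a proper parabolic $P=MU$, where the asymptotics of $f$ are governed by the Jacquet module $r_{GM}(\pi)$. Since $\pi$ is cuspidal all such Jacquet modules vanish, so $f$ vanishes once one is deep enough in each such direction, which forces $\operatorname{supp}(f)$ to be bounded modulo the central torus. (Casselman's analysis of the asymptotic expansion of matrix coefficients is a convenient reference; note that neither admissibility nor finite generation of $V$ is needed, since everything is tested one vector at a time.)

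Finally I would pass from the centre to $G^1$. The multiplication map $Z(G)^{\circ}\times[G,G]\to G$ is an isogeny, so $Z(G)^{\circ}(F)\to(G/[G,G])(F)$ has finite kernel and finite cokernel; since $G^1$ is by definition the preimage of the maximal compact subgroup of $(G/[G,G])(F)$, this yields at once that $Z(G)(F)\cap G^1$ is compact and that $G^1\cdot Z(G)(F)$ has finite index in $G(F)$. Because $C$ is compact and $G^1$ is open, $C^{-1}G^1$ is a finite union $\bigcup_j h_jG^1$ of cosets of $G^1$, so $Z(G)(F)\cap C^{-1}G^1$ is contained in a finite union of cosets of the compact group $Z(G)(F)\cap G^1$, hence in some compact set $D$. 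Now if $g\in\operatorname{supp}(f)\cap G^1$, writing $g=cz$ with $c\in C$ and $z\in Z(G)(F)$ gives $z=c^{-1}g\in C^{-1}G^1$, hence $z\in D$ and $g\in CD$. Since $f$ is locally constant, $\operatorname{supp}(f)\cap G^1$ is a closed subset of the compact set $CD$, hence compact, which is precisely the assertion that $V|_{G^1}$ is a compact representation of $G^1$.

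The only genuine obstacle is the asymptotic input quoted in the second paragraph: a fully self-contained proof would require developing the Jacquet/Casselman analysis of matrix coefficients along parabolic subgroups — in particular the uniformity of the vanishing threshold and the decomposition of the dominant cone into parabolic sectors — whereas the reduction carried out above is entirely formal.
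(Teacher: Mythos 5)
The paper does not prove Theorem \ref{CuspComp}; it is stated as a black box attributed to Harish-Chandra, so there is no in-paper argument to compare against. Your sketch is the standard route and, as far as I can see, correct.

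Two points are worth making explicit in a write-up. First, the passage from ``compactly supported modulo $Z(G)(F)$'' to ``compactly supported on $G^1$'' really is a genuine (if formal) step, and your treatment of it is right: from the isogeny $Z(G)^\circ\times[G,G]\to G$ you get that $Z(G)^\circ(F)\to (G/[G,G])(F)$ is proper with finite kernel, so $Z(G)^\circ(F)\cap G^1$ is compact, and since $Z(G)^\circ(F)$ has finite index in $Z(G)(F)$ you get compactness of $Z(G)(F)\cap G^1$. The finite-cosets argument with $C^{-1}G^1$ then closes it; one should also observe that $G^1$, being an open subgroup, is closed, so $\operatorname{supp}(f)\cap G^1$ is a closed subset of the compact set $CD$ and hence compact. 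Second, your parenthetical that neither admissibility nor finite generation is needed deserves a word of justification, because most textbook statements of the ``support mod centre'' theorem carry an admissibility hypothesis and the paper's definition of cuspidal does not assume admissibility or irreducibility. The reason it is nonetheless fine is that the only asymptotic input required is the Jacquet--Casselman lemma in the following per-vector form: if $v\in V^K$ lies in $V(U)$ (which is all of $V$ when $r_{GM}(V)=0$), then writing $v=\sum_i(\pi(u_i)w_i-w_i)$ one has $e_K\pi(a)v=\sum_i\bigl(e_K\pi(au_ia^{-1})\pi(a)w_i-e_K\pi(a)w_i\bigr)=0$ once $a$ is $P$-negative enough that all $au_ia^{-1}\in K$; combined with the finiteness of $K_0/K'$ (with $K'=\bigcap_{k\in K_0}kKk^{-1}$) this gives the uniform vanishing along each parabolic sector of the Cartan cone without ever invoking admissibility. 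With that made explicit, the proof is complete.
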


\begin{corollary} \lbl{CuspProj}
Let $G$ be a reductive group and $\rho$ be a cuspidal
representation
of $G(F)$. Then\\
(i) $\rho|_{G^1}$ is a projective object in the category
$\cM(G^1)$.\\
(ii) $\Psi(\rho)$ is a projective object in the category
$\cM(G(F))$.
\end{corollary}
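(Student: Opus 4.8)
The plan is to deduce both parts of Corollary \ref{CuspProj} from the preceding results, with Theorem \ref{CompProj} (Bernstein-Zelevinsky) doing the heavy lifting and Theorem \ref{CuspComp} (Harish-Chandra) supplying the compactness hypothesis. For part (i), I would argue that since $\rho$ is cuspidal, Theorem \ref{CuspComp} tells us $\rho|_{G^1}$ is a compact representation of $G^1$; then Theorem \ref{CompProj}, applied to the $l$-group $G^1$, immediately yields that $\rho|_{G^1}$ is a projective object in $\cM(G^1)$. The only point needing a word of care is that $G^1$ is indeed an $l$-group (it is an open subgroup of $G(F)$, being the preimage of an open compact subgroup under a continuous homomorphism), so both cited theorems apply to it.

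For part (ii), the idea is that projectivity is preserved under the compactly supported induction functor $ind_{G^1}^{G(F)}$, because $G^1$ is open in $G(F)$. Concretely, $ind_{G^1}^{G(F)}$ is left adjoint to the restriction functor $\cM(G(F)) \to \cM(G^1)$, and since $G^1$ is open, restriction is exact (it is even just a forgetful-type functor on underlying spaces after choosing coset representatives). A left adjoint of an exact functor sends projectives to projectives, so $\Psi(\rho) = ind_{G^1}^{G(F)}(\rho|_{G^1})$ is projective in $\cM(G(F))$ once we know $\rho|_{G^1}$ is projective in $\cM(G^1)$, which is part (i). Alternatively, and perhaps more in keeping with the paper's style, one can note directly that $\Psi(\rho)$ is itself a compact representation of $G(F)$ — its matrix coefficients are supported on finitely many $G^1$-cosets times a compact set — and invoke Theorem \ref{CompProj} again; I would mention this as the cleaner route if the exactness-of-restriction argument feels too terse.

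The main (really the only) obstacle is bookkeeping about the functor $ind_{G^1}^{G(F)}$: one must make sure the adjunction is set up on the correct side and that restriction along the open inclusion $G^1 < G(F)$ is exact, so that the standard ``left adjoint of an exact functor preserves projectives'' principle applies. Since $[G(F):G^1]$ is countable and $G^1$ is open, this is routine, and no genuine difficulty arises. I would therefore keep the proof to a few lines: cite Theorem \ref{CuspComp} and Theorem \ref{CompProj} for (i), and for (ii) either transport projectivity through $ind_{G^1}^{G(F)}$ using exactness of restriction, or observe that $\Psi(\rho)$ is compact and apply Theorem \ref{CompProj} directly.
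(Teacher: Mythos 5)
Your main line of argument is correct and essentially tracks the paper's. For (i) you invoke Theorem \ref{CuspComp} and Theorem \ref{CompProj}, which is exactly what the paper intends by ``(i) is clear.'' For (ii) the paper writes out the natural isomorphism
\[
\Hom_{G(F)}(\Psi(\rho),\pi) \cong \Hom_{G/G^1}\bigl(\cO(\Psi_G),\Hom_{G^1}(\rho,\pi)\bigr),
\]
notes that the inner functor $\pi\mapsto\Hom_{G^1}(\rho,\pi)$ is exact by (i), and that the outer one is exact because $\cO(\Psi_G)\cong\bC[G/G^1]$ is a free module of rank one over itself. Your phrasing — $ind_{G^1}^{G(F)}$ is left adjoint to the exact restriction along the open subgroup $G^1$, so it carries the projective $\rho|_{G^1}$ to a projective — is the same Frobenius-reciprocity computation, just packaged as an adjunction rather than written out as a two-step Hom isomorphism. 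Either formulation is fine; yours is a bit more streamlined.

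However, the ``cleaner alternative'' you propose for (ii) — that $\Psi(\rho)$ is itself a compact representation of $G(F)$, so Theorem \ref{CompProj} applies directly — is false, and you should not include it. Take $G=\bG_m$, so $G(F)=F^\times$, $G^1=O_F^\times$, and let $\rho$ be the trivial character (cuspidal, since $\GL_1$ has no proper parabolics). Then $\Psi(\rho)=ind_{O_F^\times}^{F^\times}(\bC)\cong\bC[\bZ]$ with $F^\times$ acting through the valuation by translation. Every compact open subgroup of $F^\times$ lies in $O_F^\times$ and acts trivially, so the smooth dual is the full dual $\bC^{\bZ}$; taking $v=\delta_0$ and $\xi$ the ``sum of coordinates'' functional gives the constant matrix coefficient $m_{v,\xi}\equiv1$, which is not compactly supported. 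The issue is precisely that smooth functionals on $\Psi(\rho)$ need not be supported on finitely many $G^1$-cosets, so the support heuristic in your proposal fails. Compactness is a property of $\rho|_{G^1}$, not of $\Psi(\rho)$; projectivity of $\Psi(\rho)$ must come through the adjunction (or, equivalently, the paper's Hom computation), not from another application of Theorem \ref{CompProj}.
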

\begin{proof}
(i) is clear.\\
(ii) note that $$Hom_G(\Psi(\rho),\pi) \cong
Hom_{G/G_1}(\cO(\Psi_M),Hom_{G^1}(\rho,\pi)),$$
for any representation $\pi$. Therefore the functor $\pi \mapsto
Hom_G(\Psi(\rho),\pi)$ is a composition of two exact functors and
hence is exact.
\end{proof}

\Dima{
\begin{defn}
Let $G$ be a reductive group and $K<G(F)$ be a compact open subgroup.  Denote $$\cM(G,K):= \{V \in \cM(G(F))\, | V \text{ is generated by }V^K\}$$ and $$\cM(G,K)^{\bot}:=
\{V \in \cM(G(F)\, | V^K = 0\}.$$
We call $K$ a splitting subgroup if the category $\cM(G(F))$ is the direct sum of the categories $\cM(G,K)$ and $\cM(G,K)^{\bot}$, and $\cM(G,K) \cong \cM(\cH_K(G))$. \nir{Recall that an abelian category $\mathcal{A}$ is a direct sum of two abelian subcategories $\mathcal{B}$ and $\mathcal{C}$, if every object of $\mathcal{A}$ is isomorphic to a direct sum of an object in $\mathcal{B}$ and an object in $\mathcal{C}$, and, furthermore, that there are no non-trivial
\Dima{ morphisms }
between objects of $\mathcal{B}$ and $\mathcal{C}$.}
\end{defn}
}

We will use the following statements from Bernstein's theory on
the center of the category $\cM(G)$.
\Dima{Let $P<G$ be a parabolic subgroup and $M$ be the reductive quotient of $P$.}

\begin{enumerate}
\item \lbl{1}  \Dima{ The set of splitting subgroups defines a basis at 1 for the topology of $G(F)$.} \nir{If $G$ splits over
\Rami{$O_F$}
then, for any large enough $\ell$, the congruence subgroup $K_\ell(G,F)$ is splitting.}
\item \lbl{2} \Dima{Let $\overline{P}$ denote the parabolic subgroup of $G$ opposite to $P$, and let $\overline{r}_{GM}:\cM(G(F)) \to \cM(M(F))$ denote the Jacquet functor defined using $\overline{P}$.
Then $\overline{r}_{GM}$ is right adjoint to $i_{GM}$. In particular, $i_{GM}$ maps projective objects to projective ones and hence for any irreducible cuspidal
representation $\rho$
of $M(F)$, 
$i_{GM}(\Psi(\rho))$ is a projective object of $\cM(G(F))$.}

\item \lbl{3} Denote by $\cM_{\rho}$ the subcategory of $\cM(G(F))$
generated by $i_{GM}(\Psi(\rho))$. Then $$\cM(G,K) =
\oplus_{(M,\rho) \in B_K} \cM_{\rho},$$ where $B_K$ is some finite
set of pairs consisting of a Levi subquotient of $G$ and its cuspidal
representation. Moreover, for any Levi subquotient $M<G$ and a
cuspidal representation $\rho$ of $M(F)$ such that $\cM_{\rho}
\subset \cM(G,K)$ there exist $(M',\rho ' )\in B_K$ such that
$\cM_{\rho} = \cM_{\rho '}$.
\item \lbl{4} $End(i_{GM}(\Psi(\rho)))$ is finitely generated over
$\cO(\Psi)$ \nir{which is finitely generated over the center of the ring
$End(i_{GM}(\Psi(\rho)))$. The center of the ring $End(i_{GM}(\Psi(\rho)))$ is equal to the center \Dima{$Z(\cM_{\rho})$} of the category $\cM_{\rho}$.}
\end{enumerate}

For statements \ref{1} \Dima{see e.g. \cite[pp. 15-16]{BD} and \cite[\S 2]{HcHvDJ}.}
For
statement \ref{2} see \cite{BerSec} or \cite[Theorem 3]{Bus}. 
For statements \ref{3},\ref{4} see \cite[Proposition 2.10,2.11]{BD}. 


\Dima{We now present} a criterion, due to Bernstein, for finite generation of the space $V^K$, consisting of vectors in a representation $V$ that are invariant with respect to \Dima{a compact open} 
subgroup $K$.
\Rami{
\begin{lemma} \lbl{VKFinGen}
Let $V$ be a smooth representation of $G(F)$. Suppose that for any
parabolic $P<G$ and any irreducible cuspidal representation $\rho$
of $M(F)$ (where $M$ denotes the \Dima{reductive quotient} 
of $P$),
$ \Hom_{G(F)}(i_{GM}(\Psi(\rho)),V)$ is a finitely generated
module over $\cO(\Psi_M)$. Then $V^K$ is a finitely generated
module over  $\Dima{Z(\mathcal{H}_K(G(F)))}$, for any compact open 
subgroup $K<G(F)$.
\end{lemma}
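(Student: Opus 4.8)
The plan is to translate the hypothesis---finite generation of $\Hom_{G(F)}(i_{GM}(\Psi(\rho)),V)$ over $\cO(\Psi_M)$ for every cuspidal datum $(M,\rho)$---into finite generation of $V^K$ over $Z(\cH_K(G(F)))$ by invoking Bernstein's decomposition of $\cM(G(F))$. First I would fix a compact open subgroup $K$; by statement \eqref{1} it is enough to treat $K$ contained in a splitting subgroup, so without loss of generality $K$ is splitting and $\cM(G,K)\cong\cM(\cH_K(G(F)))$, with $\cM(G(F))=\cM(G,K)\oplus\cM(G,K)^\bot$. Since $V^K$ only sees the $\cM(G,K)$-component of $V$, I may replace $V$ by its projection $V_K\in\cM(G,K)$, and then $V^K$ is the $\cH_K(G(F))$-module corresponding to $V_K$ under the equivalence. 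By statement \eqref{3}, $\cM(G,K)=\bigoplus_{(M,\rho)\in B_K}\cM_\rho$ with $B_K$ finite, so it suffices to prove finite generation in each block; equivalently, to show that for each $(M,\rho)\in B_K$ the $Z(\cM_\rho)$-module $\Hom_{G(F)}(P_\rho,V)$ is finitely generated, where $P_\rho:=i_{GM}(\Psi(\rho))$ is the projective generator of $\cM_\rho$ furnished by Corollary \ref{CuspProj} and statement \eqref{2}.

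The key point is the chain of module-finiteness relations in statement \eqref{4}: $\cO(\Psi_M)$ is finite over $Z(\cM_\rho)$, and $E_\rho:=\End_{G(F)}(P_\rho)$ is finite over $\cO(\Psi_M)$, hence $E_\rho$ is a finite $Z(\cM_\rho)$-algebra. Now $\Hom_{G(F)}(P_\rho,V)$ is naturally a module over $E_\rho$ (acting on the source), and by hypothesis it is finitely generated over $\cO(\Psi_M)$; a fortiori it is finitely generated over $E_\rho$, and therefore finitely generated over $Z(\cM_\rho)$. Summing over the finite set $B_K$, the module $\bigoplus_{(M,\rho)\in B_K}\Hom_{G(F)}(P_\rho,V)$ is finitely generated over $\prod_{(M,\rho)\in B_K}Z(\cM_\rho)=Z(\cM(G,K))$. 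Under the equivalence $\cM(G,K)\cong\cM(\cH_K(G(F)))$ the center of the category is identified with $Z(\cH_K(G(F)))$ (this uses the compatibility of Bernstein's center with the equivalence, contained in statement \eqref{4}), and the functor $W\mapsto W^K$ carries $P_\rho$ to a finitely generated projective $\cH_K(G(F))$-module whose endomorphism ring is $E_\rho$; so the $\cH_K(G(F))$-module $V^K$ is recovered, up to the block decomposition, as $\bigoplus_{(M,\rho)}\Hom_{G(F)}(P_\rho,V)\otimes_{E_\rho}(P_\rho)^K$, which is finitely generated over $Z(\cH_K(G(F)))$ because each $\Hom_{G(F)}(P_\rho,V)$ is and each $(P_\rho)^K$ is a finitely generated $\cH_K(G(F))$-module.

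I expect the main obstacle to be the bookkeeping in the last step: making precise how $V^K$ as an $\cH_K(G(F))$-module is reconstructed from the data $\{\Hom_{G(F)}(P_\rho,V)\}$ together with the projective generators $(P_\rho)^K$, and checking that this reconstruction is compatible with the identification of $Z(\cM_\rho)$ with (the block of) $Z(\cH_K(G(F)))$ from statement \eqref{4}. The cleanest route is probably to avoid the explicit reconstruction: work entirely inside $\cM(G,K)$, note that $\bigoplus_\rho P_\rho$ is a projective generator of $\cM(G,K)$, hence $\cM(G,K)\cong\cM(E)$ for $E:=\End_{G(F)}(\bigoplus_\rho P_\rho)$, under which $V_K$ corresponds to $\bigoplus_\rho\Hom_{G(F)}(P_\rho,V)$ and $V^K$ to the same module viewed over $\cH_K(G(F))\cong E$; then finite generation over the center is exactly what the finiteness relations of \eqref{4} give. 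The only genuinely external inputs are Bernstein's statements \eqref{1}--\eqref{4}, all quoted in the excerpt, so no further machinery is needed.
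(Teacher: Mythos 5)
Your overall strategy (reduce to a splitting $K$, pass to the Bernstein blocks $\cM_\rho$, use statement~(4) to convert finite generation over $\cO(\Psi_M)$ into finite generation over $Z(\cM_\rho)$) matches the paper's, but there is a genuine gap in the final assembly step. You reconstruct $V^K$ as $\bigoplus_\rho \Hom_{G(F)}(P_\rho,V)\otimes_{E_\rho}(P_\rho)^K$ and claim this is finitely generated over $Z(\cH_K(G(F)))$ because $\Hom_{G(F)}(P_\rho,V)$ is finitely generated over the center and $(P_\rho)^K$ is a finitely generated $\cH_K(G(F))$-module. But this only yields finite generation of $V^K$ over $\cH_K(G(F))$, not over its center. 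To get finite generation over $Z(\cH_K(G(F)))$ you also need $(P_\rho)^K$ itself to be finitely generated over $Z(\cH_K(G(F)))$; that is exactly what the paper establishes as Step~1 of its proof, via the explicit computation $(i_{GM}(\Psi(\rho)))^K\cong\rho^{K_M}\otimes\cO(\Psi_M)$ (a finite-rank free $\cO(\Psi_M)$-module), together with the module-finiteness of $\cO(\Psi_M)$ over $Z(\cM_\rho)$ from~(4). Your proposal never proves this and so cannot close the argument. Equivalently, your ``cleanest route'' asserts $\cH_K(G(F))\cong E:=\End(\bigoplus_\rho P_\rho)$; this is false --- the two rings are only Morita equivalent, not isomorphic, and Morita equivalence does not by itself transport finite generation over the (common) center unless one knows the rings are module-finite over that center, which again is not among the facts you have quoted.

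Two smaller remarks. First, the reduction to splitting $K$ goes via a splitting subgroup $K'\subset K$ (not ``$K$ contained in a splitting subgroup''), and it is not a formal reduction: the paper's Step~4 takes generators $v_i$ of $V^{K'}$ over $Z(\cH_{K'}(G(F)))$ and shows $e_K v_i$ generate $V^K$ over $Z(\cH_K(G(F)))$, using an explicit computation with the idempotent $e_K$. Second, the paper sidesteps the reconstruction formula entirely: having proved Step~1 for $V=P_\rho$, it surjects a finite direct sum $P_\rho^n\twoheadrightarrow V$ for $V\in\cM_\rho$ (using that $\Hom(P_\rho,V)$ is finitely generated over $Z(\cM_\rho)$), applies the exact functor $(\cdot)^K$, and concludes from Step~1. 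That route avoids the bookkeeping you rightly flagged as the main obstacle.
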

}
\begin{proof}$ $\\
\Rami{
Step 1.  Proof for the case when $K$ is splitting and $V=i_{GM}(\Psi(\rho))$ for some Levi
subquotient $M$ of $G$ and an irreducible cuspidal representation $\rho$
of $M(F).$
\Dima{Let $P$ denote the parabolic subgroup that defines $M$ and $U$ denote its unipotent radical.
Denote $K_M:=K/(U\Dima{(F)}\cap K)<M\Dima{(F)}$.}
If $V^K=0$ there is nothing to prove. Otherwise $\cM_{\rho}$ is a
direct summand of $\cM(G,K)$. Now
$$V^K = \Psi(\rho)^{K_M} = \rho^{K_M} \otimes \cO(\Psi).$$
Hence $V^K$ is finitely generated over $Z(\cM_{\rho})$. Hence
$V^K$ is finitely generated over $Z(\cM(G,K)) = Z(\cH_K(G))$.
}

\Rami{
Step 2. Proof for the case when $K$ is splitting and $V \in \cM_{\rho}$ for some Levi
subquotient $M<G$ and an irreducible cuspidal representation $\rho$
of $M(F)$.
}

\Rami{
Let $$\phi: i_{GM}(\Psi(\rho)) \otimes \Hom(i_{GM}(\Psi(\rho)),V)
\twoheadrightarrow V$$ be the natural epimorphism. We are given that
$\Hom(i_{GM}(\Psi(\rho)),V)$ is finitely generated over
$\cO(\Psi)$. Hence it is finitely generated over $Z(\cM(\rho))$.
Choose some generators $\alpha_1, ..., \alpha_n \in
\Hom(i_{GM}(\Psi(\rho))$. Let $$\psi: i_{GM}(\Psi(\rho))^n
\hookrightarrow i_{GM}(\Psi(\rho)) \otimes
\Hom(i_{GM}(\Psi(\rho)),V)$$ be the corresponding morphism.
$Im(\phi \circ \psi)$ is $Z(\cM(\rho))$-invariant and hence
coincides with $Im(\phi)$. Hence $\phi \circ \psi$ is onto. The
statement now follows from the previous step.
}

\Rami{
Step 3. Proof for the case when $K$ is splitting.
}

\Rami{
Let $W<V$ be the subrepresentation generated by $V^K$. By
definition $W \in \cM(G,K)$ and hence $W = \oplus_{i=1}^n W_i$
where $W_i \in \cM_{\rho_i}$ for some $\rho_i$. The lemma now
follows from the previous step.
}

\Rami{
Step 4. General case
}

\Rami{
Let $K'$ be a splitting subgroup s.t. $K'<K$.
Let $v_1...v_n \in V^{K'}$ be the generators of $V^{K'}$ over
$Z(\mathcal{H}_{K'}(G(F)))$ given by the previous step. Define $w_i:=e_{K}\Dima{v_i} \in V^{K}$ where  $e_{K}\in \mathcal{H}_{K}(G(F))$ is the normalized Haar measure of $K.$ Let us show that $w_i$ generate $V^{K}$ over

$Z(\mathcal{H}_{K}(G(F)))$.  Let $x \in  V^{K}$. We can represent $x$ as a sum $\sum h_i v_i$, where $h_i \in Z(\mathcal{H}_{K'}(G(F)))$. Now $$x=e_{K}x=\sum e_{K}h_i v_i=\sum e_{K} e_{K}h_i v_i=\sum e_{K} h_i e_{K}v_i=\sum e_{K} h_i e_{K} e_{K}v_i= \sum e_{K} h_i e_{K} w_i.$$}
\end{proof}

Finally, in this subsection, we state two facts about homologies of $l$-groups. The proofs and relevant definitions are in Subsection \ref{subsec:homologies}.

\begin{lemma} \lbl{FinDimH1H0}
Let $G$ be an algebraic group and $U$ be its unipotent radical.
Let $\rho$ be an irreducible cuspidal representation of
$(G/U)(F)$. We treat $\rho$ as a representation of $G(F)$ with
trivial action of $U(F)$.

Let $H<G$ be an algebraic subgroup. Suppose that the space of
coinvariants $\rho_{H(F)}$ is finite dimensional. Then $\dim
\oH_1(H(F),\rho)< \infty .$
\end{lemma}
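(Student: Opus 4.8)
The plan is to build, from the cuspidal representation $\rho$, a resolution of $\rho$ by representations whose restriction to $H(F)$ is particularly well-behaved homologically, and then feed the hypothesis $\dim \rho_{H(F)} < \infty$ into the resulting long exact sequence. The key point is that, since $\rho$ is cuspidal on $(G/U)(F)$, Corollary \ref{CuspProj}(i) tells us that $\rho$ restricted to $(G/U)^1$ is projective in $\cM((G/U)^1)$, equivalently $\rho|_{(G/U)^1}$ is a direct summand of a free module, so that $\oH_i((G/U)^1, \rho) = 0$ for $i>0$. The subtlety is that we want homology over $H(F)$, not over $(G/U)^1$, and $U(F)$ acts trivially; so first I would reduce to the case $U = 1$ by analyzing the map $H(F) \to (G/U)(F)$.

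First I would treat the reductive case $G = G/U$, i.e. $\rho$ cuspidal on $G(F)$ and $H < G$. Here I would use that $G(F)/G^1$ is a finitely generated abelian group (a lattice), so $H(F) \cap G^1$ has finite index behavior controlled by a free abelian quotient of bounded rank; more precisely, I would stratify via the exact sequence $1 \to H(F)\cap G^1 \to H(F) \to H(F)/(H(F)\cap G^1) \to 1$, where the quotient embeds in the lattice $G(F)/G^1 \cong \bZ^n$, hence is free abelian of finite rank $r$. Since $\rho|_{G^1}$ is projective, $\rho|_{H(F)\cap G^1}$ is projective as well (restriction of a projective smooth representation to an open subgroup is projective), so $\oH_i(H(F)\cap G^1, \rho) = 0$ for $i \geq 1$ and $\oH_0(H(F)\cap G^1,\rho) = \rho_{H(F)\cap G^1}$. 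Then the Hochschild–Serre spectral sequence for $H(F)\cap G^1 \triangleleft H(F)$ collapses onto the row $i=0$, giving $\oH_1(H(F), \rho) \cong \oH_1\bigl(\bZ^r, \rho_{H(F)\cap G^1}\bigr)$ where $\bZ^r$ acts through the discrete quotient. Now the hypothesis $\dim \rho_{H(F)} < \infty$ says exactly $\dim (\rho_{H(F)\cap G^1})_{\bZ^r} < \infty$, i.e. the coinvariants of the $\bZ^r$-module $M := \rho_{H(F)\cap G^1}$ are finite-dimensional. For a finitely generated... — wait, $M$ need not be finitely generated — so here I would invoke Lemma \ref{FinDimH1H0}'s eventual homological input: for a $\bZ^r$-module $M$ (i.e. a module over $\bC[\bZ^r] = \bC[x_1^{\pm},\dots,x_r^{\pm}]$), finiteness of $M \otimes_{\bC[\bZ^r]} \bC$ implies, by Nakayama at the maximal ideal $(x_1-1,\dots,x_r-1)$ together with the Noetherianity of $\bC[\bZ^r]$, that the localization of $M$ there is finitely generated, hence finitely presented, and then $\mathrm{Tor}^{\bC[\bZ^r]}_1(M,\bC) = \oH_1(\bZ^r, M)$ is finite-dimensional. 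This gives the reductive case.

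For the general case I would push the trivial $U(F)$-action through: set $\bar H := $ image of $H(F)$ in $(G/U)(F)$, which is the $F$-points of an algebraic subgroup $\overline{H} < G/U$ up to finite index issues, and let $N := H(F) \cap U(F) = \ker(H(F) \to \bar H)$. Since $U(F)$ acts trivially on $\rho$, so does $N$, and the Hochschild–Serre spectral sequence for $N \triangleleft H(F)$ has $E^2$-page $\oH_p(\bar H, \oH_q(N, \rho)) = \oH_p(\bar H, \rho \otimes \oH_q(N,\bC))$. The group $N$ is the $F$-points of a unipotent algebraic group (a subgroup of $U$), hence is an increasing union of compact open subgroups — i.e. it has no nontrivial characters to $\bC^\times$ with open kernel, but more usefully $\oH_q(N, \bC)$ may be infinite-dimensional; however $\oH_0(N,\rho) = \rho$ and I claim the relevant low-degree piece is still controlled: the five-term exact sequence gives $\oH_1(\bar H, \rho) \to \oH_1(H(F),\rho) \to \oH_0(\bar H, \oH_1(N,\rho)) \to \oH_0(\bar H,\rho) = \rho_{\bar H}$, and separately $\oH_0(H(F),\rho) = \rho_{\bar H} = \rho_{H(F)}$ is finite-dimensional by hypothesis. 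So it remains to bound $\oH_1(\bar H, \rho)$ — which is the reductive case already done, applied to $\overline{H} < G/U$ with its cuspidal $\rho$ and finite-dimensional coinvariants $\rho_{\overline H(F)} = \rho_{H(F)}$ — and to bound the term $\oH_0(\bar H, \oH_1(N,\rho))$, i.e. $(\rho \otimes \oH_1(N,\bC))_{\bar H}$.

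\textbf{The main obstacle.} The hard part is exactly controlling $\oH_1(N,\bC)$ and the coinvariants $\bigl(\rho \otimes \oH_1(N,\bC)\bigr)_{\overline H(F)}$: $N$ is a union of compact opens, so $\oH_i(N, \bC) = \varinjlik$ ... is a direct limit of $\oH_i$ of compact open subgroups with zero... no — for a compact $l$-group $\oH_i(\cdot, \bC) = 0$ for $i > 0$ since $\bC$ has characteristic zero and we may average, so $\oH_i(N, \bC) = \varinjlim_{K \text{ cpt open in } N}\oH_i(K,\bC) = 0$ for $i \geq 1$ as well — so in fact the $N$-homology of any smooth representation over $\bC$ vanishes in positive degrees, $\oH_q(N, \rho) = 0$ for $q \geq 1$, and the spectral sequence degenerates to $\oH_p(H(F),\rho) \cong \oH_p(\bar H, \rho)$ outright. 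That collapses the general case into the reductive case cleanly. So the genuine technical core is the Noetherian/Nakayama argument over $\bC[\bZ^r]$ in the reductive case: one must show carefully that $\dim M_{\bZ^r} < \infty \Rightarrow \dim \oH_1(\bZ^r, M) < \infty$ for an arbitrary (possibly non-finitely-generated) $\bC[\bZ^r]$-module $M$, which I would handle by localizing at the augmentation ideal $\fm$, noting $M_\fm$ is finitely generated over the local Noetherian ring $\bC[\bZ^r]_\fm$ by Nakayama (since $M_\fm/\fm M_\fm = M_{\bZ^r}$ is finite-dimensional), then computing $\oH_\bullet(\bZ^r, M) = \mathrm{Tor}^{\bC[\bZ^r]}_\bullet(\bC, M)$ via the Koszul complex on $x_1-1,\dots,x_r-1$, which only sees $M_\fm$ and is finite-dimensional in each degree because $M_\fm$ is finitely generated over a Noetherian ring and the Koszul homology of a finitely generated module over a Noetherian local ring with respect to elements of the maximal ideal is finitely generated over the residue field $\bC$.
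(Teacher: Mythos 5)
Your argument tracks the paper's through the first three reductions: the unipotent radical contributes nothing (your vanishing of $\oH_q(N,\bC)$ for $q>0$ is exactly the content of Lemma~\ref{HomQGroups}); cuspidality gives acyclicity over $H(F)\cap G^1$ (the paper's Lemma~\ref{CuspAcyc} — though note that your justification ``restriction of a projective to an open subgroup'' does not apply, since $H(F)\cap G^1$ is closed but generally not open in $G^1$; the right reason is that a compact representation remains compact on restriction to a closed subgroup and is therefore projective by Theorem~\ref{CompProj}); and the problem reduces to $\oH_1$ of the lattice $H(F)/(H(F)\cap G^1)$ acting on $M:=\rho_{H(F)\cap G^1}$. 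The final step, however, has a genuine gap.

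Your appeal to Nakayama is not valid: knowing $\dim M_\fm/\fm M_\fm<\infty$ does \emph{not} imply that $M_\fm$ is finitely generated over $\bC[\bZ^r]_\fm$ — Nakayama requires finite generation as a hypothesis, not a conclusion. Indeed the purely algebraic implication ``$\dim(M/\fm M)<\infty\Rightarrow\dim\mathrm{Tor}_1^{\bC[\bZ^r]}(\bC,M)<\infty$'' is false. For $r=1$ take $M=\bigoplus_{i,j\geq 0}\bC\,e_{ij}$ with $x\cdot e_{ij}=e_{ij}+e_{i,j-1}$ and $x\cdot e_{i0}=e_{i0}$; then $T:=x-1$ is locally nilpotent and surjective, so $x$ is invertible, $M/(x-a)M=0$ for every $a\in\bC^\times$ (hence $M_{\bZ}=0$, and in fact $M/\fm_\psi M$ is finite-dimensional for every maximal ideal $\fm_\psi$), yet $\oH_1(\bZ,M)=\ker(x-1)=\bigoplus_i\bC\,e_{i0}$ is infinite-dimensional. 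So no commutative-algebra count over the group ring can close the argument using only the finite-dimensionality of coinvariants — structural input on the action is needed. The paper supplies it: it passes to the sublattice $(H(F)\cap G_0)/(H(F)\cap G^1)$, where $G_0=G^1Z(G(F))$, on which the action is through a character coming from the central character of the cuspidal $\rho$, and then Lemma~\ref{AFG} gives $\oH_1(L,V)\cong\oH_0(L,V)\otimes_{\bC}(L\otimes_{\bZ}\bC)$ on the nose for character actions, so finiteness of $\oH_0$ immediately yields finiteness of $\oH_1$. That ``acts by a character'' observation, absent from your proposal, is the missing idea.
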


We will also use the following version of Shapiro Lemma.

\begin{lemma} \lbl{LemShap}
Let $G$ be an $l$-group that acts transitively on an $l$-space
$X$. Let $\cF$ be a $G$-equivariant sheaf over $X$. Choose a point
$x \in X$, let $\cF_x$ denote the stalk of $\cF$ at $x$ and $G_x$
denote the stabilizer of $x$. Then
$$\oH_i(G,\cF(X))= \oH_i(G_x,\cF_x).$$
\end{lemma}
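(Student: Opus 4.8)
The plan is to identify $\cF(X)$ with a compactly induced representation and then run a derived-functor computation, using that $\oH_\bullet(\Gamma,-)$ is the left derived functor of the coinvariants functor $CI_\Gamma$. First I would reduce to a homogeneous space: since $G$ acts transitively on the $l$-space $X$, the orbit map $g\mapsto g\cdot x$ descends to a homeomorphism of $l$-spaces $G/G_x\xrightarrow{\ \sim\ }X$ (standard; see \cite{BZ}). Under the resulting equivalence between $G$-equivariant $l$-sheaves on $X\cong G/G_x$ and smooth representations of the stabilizer $G_x$ (given by the stalk functor), $\cF$ corresponds to $\sigma:=\cF_x\in\cM(G_x)$, and $\cF(X)$ is identified $G$-equivariantly with $ind_{G_x}^G(\sigma)$; moreover $\sigma\mapsto \cF(X)\cong ind_{G_x}^G(\sigma)$ is exact (\cite{BZ}). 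So the claim becomes $\oH_i\bigl(G,ind_{G_x}^G(\sigma)\bigr)\cong\oH_i(G_x,\sigma)$.

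Next I would set up the comparison on projective generators. Recall that the objects $ind_K^{G_x}(\mathbf 1)$, for $K<G_x$ compact open, are projective in $\cM(G_x)$ --- since $\Hom_{G_x}\bigl(ind_K^{G_x}(\mathbf 1),-\bigr)=(-)^K$ is exact --- and form a generating family. By transitivity of induction, $ind_{G_x}^G\bigl(ind_K^{G_x}(\mathbf 1)\bigr)\cong ind_K^G(\mathbf 1)$, and I claim this is projective in $\cM(G)$: by Frobenius reciprocity for the closed subgroup $K$ (\cite{BZ}), $\Hom_G\bigl(ind_K^G(\mathbf 1),\pi\bigr)\cong(\pi|_K)^K=e_K\pi$, where the modulus character that a priori appears is trivial because $K$ is compact. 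Finally, both $CI_{G_x}\bigl(ind_K^{G_x}(\mathbf 1)\bigr)$ and $CI_G\bigl(ind_K^G(\mathbf 1)\bigr)$ are canonically $\bC$, via integration against the invariant measure on $G_x/K$ and on $K\backslash G$ respectively (these invariant measures exist precisely because $K$ is compact), and these identifications are compatible with morphisms; this gives a natural transformation $CI_G\circ ind_{G_x}^G\Rightarrow CI_{G_x}$ that is an isomorphism on direct sums of the objects $ind_K^{G_x}(\mathbf 1)$.

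To conclude, I would pick a projective resolution $P_\bullet\twoheadrightarrow\sigma$ in $\cM(G_x)$ with each $P_j$ a direct sum of objects $ind_K^{G_x}(\mathbf 1)$. By exactness of $ind_{G_x}^G$, the complex $ind_{G_x}^G(P_\bullet)\twoheadrightarrow ind_{G_x}^G(\sigma)=\cF(X)$ is still a resolution, and since $ind_{G_x}^G$ commutes with direct sums each $ind_{G_x}^G(P_j)$ is a direct sum of the projective objects $ind_K^G(\mathbf 1)$, hence projective (in particular $CI_G$-acyclic). Therefore, applying $CI_G$ and using the comparison above,
\[ \oH_i\bigl(G,\cF(X)\bigr)=H_i\!\left(CI_G\bigl(ind_{G_x}^G(P_\bullet)\bigr)\right)=H_i\bigl(CI_{G_x}(P_\bullet)\bigr)=\oH_i(G_x,\sigma)=\oH_i(G_x,\cF_x). \]

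The step I expect to need the most care --- and the reason this is placed in the homological appendix --- is the comparison $CI_G\circ ind_{G_x}^G\Rightarrow CI_{G_x}$: for a general closed subgroup $G_x$ and general coefficients the naive identity $CI_G\bigl(ind_{G_x}^G(W)\bigr)\cong W_{G_x}$ is false (a modulus character on $G_x\backslash G$ intervenes), and it is precisely the reduction to the compact-subgroup generators $ind_K^{G_x}(\mathbf 1)$, whose associated homogeneous spaces are unimodular, that makes the identity clean. Everything else is either formal homological algebra or quoted from \cite{BZ}.
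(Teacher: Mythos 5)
Your approach is in substance the same as the paper's: identify $\cF(X)$ with $ind_{G_x}^G\cF_x$, note that $ind_{G_x}^G$ is exact and sends projective generators to $CI_G$-acyclic objects, and then compare the derived functors of $CI_{G_x}$ and of $CI_G\circ ind_{G_x}^G$ along a suitable resolution. The paper packages the last step into Lemma~\ref{HomLeib}(i) and simply \emph{asserts} the key identity $CI_{G_x}=CI_G\circ ind_{G_x}^G$; you instead try to produce that identity on the generating projectives $ind_K^{G_x}(\mathbf 1)$, $K$ compact open, and then let homological algebra propagate it. So the two arguments use the same route; yours is more explicit about the ingredient the paper takes for granted, and you are right to single out that ingredient as the delicate point.

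That extra explicitness, however, reveals a genuine gap, and it is exactly the one you flag. The isomorphisms $CI_{G_x}(ind_K^{G_x}\mathbf 1)\cong\bC\cong CI_G(ind_K^G\mathbf 1)$ do exist object by object (the invariant measures on $G_x/K$ and $K\backslash G$ exist because $K$ is compact), but they are \emph{not} compatible with all intertwiners between the generators: if $\phi\colon ind_{K'}^{G_x}(\mathbf 1)\to ind_K^{G_x}(\mathbf 1)$ is convolution against $1_{Kt_0}$ for $t_0\in G_x$, then $CI_{G_x}(\phi)$ multiplies by $\mathrm{vol}_{G_x}(Kt_0)$, whereas $CI_G\bigl(ind_{G_x}^G\phi\bigr)$ multiplies by $\mathrm{vol}_{G_x}(Kt_0)\cdot\delta_G(t_0)\,\delta_{G_x}(t_0)^{-1}$ — the modular factor appears when you move a \emph{left} translate of $t_0$ through the right Haar measure of $G$. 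So the assembled ``natural transformation'' is natural only when $\delta_G|_{G_x}=\delta_{G_x}$, i.e.\ when $G_x\backslash G$ is unimodular; restricting to compact $K$ does not remove this obstruction, it merely postpones it to the boundary maps of the resolution. Concretely, for $G=B$ the Borel of $\SL_2(\bQ_p)$ and $G_x=T$ its maximal torus, one has $\oH_0\bigl(B,\Sc(B/T)\bigr)=\Sc(B/T)_B=0$ (because $T$ acts on $\Sc(B/T)_N\cong\bC$ through $\delta_B|_T\neq 1$), while $\oH_0(T,\bC)=\bC$; so without a twist by the modular character of $G_x$ in $G$ the asserted isomorphism fails. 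To be fair, the paper's one-line proof rests on the identical unexamined assertion $CI_{G_x}=CI_G\circ ind_{G_x}^G$, so it glosses over the same issue — but since you explicitly raised the modular-character concern, you should be aware that the device of compact-open generators does not, by itself, supply the missing naturality.
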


\subsection{Descent Of
Finite Multiplicity} \lbl{subsec:desc.cusp}

\begin{definition}
We call a pair $(G,H)$ consisting of a reductive group $G$ and an
algebraic subgroup $H$ \textbf{an $F$-spherical pair} if for any
parabolic subgroup $P \subset G$, there is a finite number of
double cosets in $P(F) \setminus G(F) / H(F)$.
\end{definition}

\begin{remark}
If $char F=0$ \Dima{and $G$ is quasi-split over $F$} then $(G,H)$ is an $F$-spherical pair if and only if
it is a spherical pair of algebraic groups. However, we do not
know whether this is true if $char F>0$.
\end{remark}

\begin{notation}
Let $G$ be a reductive group and $H$ be a subgroup. Let $P<G$ be
a parabolic subgroup and $M$ be its Levi quotient. We denote by
$H_M$ the image of \nir{$H\cap P$} under the projection $P \twoheadrightarrow
M$.
\end{notation}

The following Lemma is the main step in the proof of Theorem \ref{IntFinGen}

\begin{lemma} \lbl{FinMultCuspDesc}
Let $(G,H)$ be an $F$-spherical pair. Let $P<G$ be a parabolic
subgroup and $M$ be its Levi \nir{quotient}.  Then\\
(i) $(M, H_M)$ is also an $F$-spherical pair.\\
(ii) Suppose also that for any smooth irreducible representation
$\rho$ of $G(F)$ we have $$\dim\Hom_{H(F)}(\rho|_{H(F)}, \bC) <
\infty.$$ Then for any irreducible cuspidal representation $\sigma$
of $M(F)$ we have $$\dim\Hom_{H_M(F)}(\sigma|_{H_M(F)}, \bC) <
\infty.$$
\end{lemma}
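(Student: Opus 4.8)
The plan is to prove both parts by stratifying $G(F)/H(F)$, or rather the relevant parabolic quotients, by $H(F)$-orbits, and then using the two homological tools (Lemmas \ref{FinDimH1H0} and \ref{LemShap}) together with the long exact sequence in homology.

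First I would dispose of part (i). Let $Q < M$ be a parabolic subgroup, and let $P' < G$ be the parabolic subgroup that is the preimage of $Q$ under $P \twoheadrightarrow M$; note that $P'(F)\backslash G(F) \cong Q(F)\backslash M(F)$ is compatible with the projection $P(F)\backslash G(F) \to \mathrm{pt}$ in the sense that $Q(F)\backslash M(F)/H_M(F)$ is a quotient of the $H(F)$-action on $M(F)$, and that $H_M(F)$-orbits on $Q(F)\backslash M(F)$ are images of $H(F)$-orbits on $P'(F)\backslash G(F)$. More precisely, since $H_M$ is the image of $H\cap P$ in $M$, and $M(F)$ is a quotient of $P(F)$, the double coset space $Q(F)\backslash M(F)/H_M(F)$ is a continuous image of $P'(F)\backslash G(F)/H(F)$; as the latter is finite by the $F$-spherical hypothesis applied to the parabolic $P'<G$, so is the former. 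Hence $(M,H_M)$ is $F$-spherical.

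For part (ii), fix an irreducible cuspidal representation $\sigma$ of $M(F)$, viewed as a representation of $P(F)$ with trivial $U(F)$-action. By Frobenius reciprocity (the adjunction $i_{GM}\dashv r_{GM}$ and its variants) together with the fact that $\sigma$ occurs in $r_{GM}$ of some irreducible $\rho$ — or more directly, by computing $\Hom_{H(F)}(\sigma,\bC)$ against the hypothesis on $G$ — the key point is to relate $\Hom_{H_M(F)}(\sigma,\bC)$ to $\Hom_{H(F)}$ of the parabolically induced object. Concretely, I would consider $\sigma$ as a $P(F)$-representation and form $\Sc(G(F)/P(F))$-type coefficient systems: the space $\mathrm{Ind}_{P(F)}^{G(F)}\sigma$ (full induction) restricted to $H(F)$ decomposes, via the finitely many $H(F)$-orbits $O_1,\dots,O_r$ on $P(F)\backslash G(F)$, into a finite filtration whose subquotients are, by the equivariant Shapiro Lemma (Lemma \ref{LemShap}), of the form related to $\oH_*\big((H\cap P^{g_i})(F),\sigma^{g_i}\big)$. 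Taking $H(F)$-coinvariants (i.e. $\oH_0$) and using the long exact sequence of homology for the filtration, one sees that $\big(\mathrm{Ind}_{P(F)}^{G(F)}\sigma\big)_{H(F)}$ is built from the $\oH_0$ and $\oH_1$ terms attached to the stabilizers. The hypothesis on $G$ controls the $\oH_0$ of the induced object (it is $\Hom_{H(F)}(\mathrm{Ind},\bC)^*$ up to the usual dualities, which is finite-dimensional on each piece), and Lemma \ref{FinDimH1H0} — applied to the groups $H\cap P^{g}$ with the cuspidal representation $\sigma^g$ of a Levi of $H\cap P^g$ after passing through the unipotent radical — lets me propagate finiteness through the long exact sequence: finiteness of the middle terms plus finiteness of alternating outer terms forces finiteness of the remaining term, which at the open orbit is $\sigma_{H_M(F)}$ (suitably twisted), hence $\Hom_{H_M(F)}(\sigma,\bC)$ is finite-dimensional.

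The main obstacle I expect is the bookkeeping at the non-closed orbits: Lemma \ref{FinDimH1H0} requires the representation to be cuspidal on a Levi \emph{of the group acting}, so I must check that for each $g$, the relevant stabilizer $H\cap P^g$ (or rather its relation to $H_{M}$ via $P\twoheadrightarrow M$) has $\sigma^g$ pulled back from a cuspidal representation of an appropriate reductive quotient, and that the coinvariants $\sigma_{(H\cap P^g)(F)}$ that appear are finite-dimensional — the latter being exactly where the global finite-multiplicity hypothesis on $G$ must be fed in, orbit by orbit, using that $\Hom_{H(F)}(\mathrm{Ind}_{P(F)}^{G(F)}\sigma,\bC)$ injects into $\prod_i \Hom_{(H\cap P^{g_i})(F)}(\sigma^{g_i},\bC)$ up to finite filtration and that the left side is finite since $\mathrm{Ind}_{P(F)}^{G(F)}\sigma$ has finite length (cuspidality of $\sigma$) so each of its irreducible constituents contributes finitely. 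Handling the modular characters (the twist by $\delta_P^{1/2}$ versus $\delta_P$) correctly throughout, and making sure the induction used — compact versus full — matches the adjunction needed for the homology computation, is the delicate part; everything else is an exact-sequence chase.
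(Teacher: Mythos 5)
Your proposal follows essentially the same route as the paper: part (i) via the preimage parabolic, and part (ii) by filtering the parabolically induced module according to the finitely many $H(F)$-orbits on $G(F)/P(F)$, identifying the graded pieces via the equivariant Shapiro lemma (Lemma \ref{LemShap}), feeding in Lemma \ref{FinDimH1H0} at each stratum, and chasing the long exact sequence in homology (this last step is exactly the paper's Lemma \ref{LinAlg}). The minor imprecisions you flag — the "injects into a product" heuristic for $\Hom$, the phrasing about a Levi of $H\cap P^g$ rather than of $P^g$, and the worry about $\delta_P^{1/2}$ normalization (moot here since the paper's $i_{GM}$ is unnormalized compact induction) — do not affect the substance.
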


\begin{remark}
One can show that the converse of (ii) is also true. Namely, if
$\dim\Hom_{H_M(F)}(\sigma|_{H_M(F)}, \bC) < \infty$ for any
irreducible cuspidal representation $\sigma$ of $M(F)$ for any
Levi subquotient $M$ then $\dim\Hom_{H(F)}(\rho|_{H(F)}, \bC) <
\infty$ for any smooth irreducible representation $\rho$ of
$G(F)$. We will not prove this since we will not use this.
\end{remark}

We will need the following lemma.

\begin{lemma} \lbl{LinAlg}
Let $M$ be an l-group and $V$ be a smooth representation of $M$.
Let $0=F^0V \subset ... \subset F^{n-1}V \subset F^nV=V$ be a
finite filtration of $V$ by subrepresentations. Suppose that for
any $i$, either $$\dim (F^iV/F^{i-1}V)_M = \infty$$ or $$\text{both }\dim
(F^iV/F^{i-1}V)_M < \infty \text{ and }\dim \oH_1(M,(F^iV/F^{i-1}V)) <
\infty.$$ Suppose also that $\dim V_M < \infty$. Then $\dim
(F^iV/F^{i-1}V)_M < \infty$ for any $i$.
\end{lemma}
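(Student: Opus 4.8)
The plan is to run a downward induction on $i$ (from $i=n$ to $i=1$), using the long exact sequence in homology attached to a short exact sequence of $M$-representations together with the hypothesis that $\dim V_M < \infty$. Recall that for a short exact sequence $0 \to A \to B \to C \to 0$ of smooth $M$-representations, taking coinvariants (= $H_0$) is right exact, and it fits into the exact sequence
\[
\oH_1(M,C) \to A_M \to B_M \to C_M \to 0,
\]
since $\oH_0(M,-) = (-)_M$. The key consequence I would extract is: if $\dim B_M < \infty$ and $\dim \oH_1(M,C) < \infty$, then $\dim A_M < \infty$; and, dually, $\dim C_M \le \dim B_M < \infty$ always.

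First I would set up the induction. Write $Q^i := F^iV/F^{i-1}V$ for the graded pieces, and $V^{(i)} := V/F^{i-1}V$, so that $V^{(1)} = V$ and there are short exact sequences $0 \to Q^i \to V^{(i)} \to V^{(i+1)} \to 0$ for each $i$. I claim that $\dim (V^{(i)})_M < \infty$ for all $i$, proven by downward induction. For $i = 1$ this is the hypothesis $\dim V_M < \infty$. Actually it is cleaner to argue directly: I will show by downward induction on $i$ that $\dim (F^iV)_M < \infty$ and $\dim Q^i_M < \infty$ simultaneously. For $i = n$: $F^nV = V$, so $\dim (F^nV)_M < \infty$ by hypothesis; and the hypothesis on $Q^n$ must then be the "finite" alternative — indeed, the sequence $0 \to F^{n-1}V \to V \to Q^n \to 0$ gives $(Q^n)_M$ as a quotient of $V_M$, hence $\dim (Q^n)_M < \infty$, which rules out the first alternative and forces $\dim \oH_1(M, Q^n) < \infty$ as well.

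Now the inductive step: suppose $\dim (F^iV)_M < \infty$ is known (and that for every $j > i$ the "finite" alternative holds, so $\dim \oH_1(M,Q^j) < \infty$). Apply the homology long exact sequence to $0 \to F^{i-1}V \to F^iV \to Q^i \to 0$:
\[
\oH_1(M, Q^i) \to (F^{i-1}V)_M \to (F^iV)_M \to (Q^i)_M \to 0.
\]
From this, $(Q^i)_M$ is a quotient of $(F^iV)_M$, so $\dim (Q^i)_M < \infty$; this excludes the first alternative in the hypothesis for $Q^i$, hence $\dim \oH_1(M, Q^i) < \infty$. Then the exact sequence above shows $(F^{i-1}V)_M$ has a finite-dimensional submodule (the image of $(F^{i-1}V)_M \to (F^iV)_M$ has finite dimension, being inside $(F^iV)_M$) with finite-dimensional quotient onto said image's complement — more precisely, the kernel of $(F^{i-1}V)_M \to (F^iV)_M$ is a quotient of $\oH_1(M,Q^i)$, hence finite-dimensional, and the image is inside $(F^iV)_M$, hence finite-dimensional, so $\dim (F^{i-1}V)_M < \infty$. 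This completes the downward induction, and in particular establishes $\dim Q^i_M = \dim (F^iV/F^{i-1}V)_M < \infty$ for all $i$, which is the assertion.

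The only subtle point — and the step I would be most careful about — is the bookkeeping in the alternative hypothesis: one must verify at each stage that the *first* alternative ($\dim Q^i_M = \infty$) is genuinely excluded before one is entitled to use $\dim \oH_1(M,Q^i) < \infty$. This is exactly what the right-exactness of coinvariants delivers, since $Q^i_M$ is always a quotient of $(F^iV)_M$, whose finite-dimensionality is the running inductive hypothesis. No deeper input (no representation theory of reductive groups, no properties of $l$-groups beyond right-exactness of coinvariants and the standard long exact sequence) is needed; the lemma is pure homological algebra in the category of smooth $M$-representations.
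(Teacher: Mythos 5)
Your proof is correct and takes essentially the same approach as the paper: a downward induction on $i$ showing $\dim(F^iV)_M<\infty$, using the homology long exact sequence of $0\to F^{i-1}V\to F^iV\to F^iV/F^{i-1}V\to 0$. You are somewhat more explicit than the paper about the bookkeeping step — noting that right-exactness of coinvariants forces $(F^iV/F^{i-1}V)_M$ to be a finite-dimensional quotient of $(F^iV)_M$, thereby ruling out the first alternative and licensing the use of $\dim\oH_1(M,F^iV/F^{i-1}V)<\infty$ — but this is exactly the (implicit) content of the paper's phrase ``and, therefore, $\dim(F^iV/F^{i-1}V)_M<\infty$''.
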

\begin{proof}
\nir{We prove by a decreasing induction on $i$ that $\dim(F^iV)_M<\infty$, and, therefore, $\dim(F^iV/F^{i-1}V)_M<\infty$. Consider the short exact sequence
$$0 \to F^{i-1}V \to F^iV \to F^iV/F^{i-1}V \to 0,$$
and the corresponding long exact sequence
$$...\to \oH_1(M,(F^iV/F^{i-1}V))  \to (F^{i-1}V)_M \to (F^iV)_M \to (F^iV/F^{i-1}V)_M \to 0.$$
In this sequence $\dim \oH_1(M,(F^iV/F^{i-1}V))< \infty$ and $\dim
(F^iV)_M < \infty$, and hence $\dim (F^{i-1}V)_M < \infty$.}
\end{proof}

Now we are ready to prove Lemma \ref{FinMultCuspDesc}.
\begin{proof}[Proof of Lemma \ref{FinMultCuspDesc}]
$ $\\
(i) is trivial.\\
(ii) Let $P<G$ be a parabolic subgroup, $M$ be the Levi quotient
of $P$ and let $\rho$ be a cuspidal representation of $M(F)$. We
know that $\dim (i_{GM}\rho)_{H(F)}< \infty$ and we have to show
that $\dim \rho_{H_M(F)}< \infty$.

Let $\mathcal{I}$ denote the natural $G(F)$-equivariant \nir{locally constant sheaf of complex vector spaces} on
$G(F)/P(F)$ such that $i_{GM}\rho \cong \Sc(G(F)/P(F),
\mathcal{I})$. Let $Y_j$ denote the $H(F)$ orbits on $G(F)/P(F)$.
We know that there exists a natural filtration on $\Sc(G(F)/P(F),
\mathcal{I})|_{H(F)}$ with associated graded components isomorphic
to $\Sc(Y_j,\mathcal{I}_j)$, where $\mathcal{I}_j$ are $H(F)$-
equivariant sheaves on $Y_j$ corresponding to $\mathcal{I}$. For
any $j$ choose a representative $y_j \in Y_j$. Do it in such a way
that there exists $j_0$ such that $y_{j_0}=[1]$. Let
$P_j:=G_{y_j}$ and $M_j$ be its Levi quotient. Note that $P_{j_0} = P$ and
$M_{j_0} = M$. Let $\rho_j$ be the stalk of $\mathcal{I}_j$ at the
point $y_j$. Clearly $\rho_j$ is a cuspidal irreducible
representation of $M_j(F)$ and $\rho_{j_0}=\rho$. By Shapiro Lemma
(Lemma \ref{LemShap}) $$\oH_i(H(F),\Sc(Y_j,\mathcal{I}_j)) \cong
\oH_i((H \cap P_j)(F), \rho_j).$$
By Lemma \ref{FinDimH1H0} either $\dim \oH_0((H \cap P_j)(F),
\rho_j) = \infty$ or both $\dim \oH_0((H \cap P_j)(F), \rho_j) <
\infty$ and $\dim \oH_1((H \cap P_j)(F), \rho_j) < \infty$. Hence
by Lemma \ref{LinAlg} $\dim \oH_0((H \cap P_j)(F), \rho_j) <
\infty$ and hence $\dim \rho_{H_M(F)}< \infty$.
\end{proof}

\subsection{Proof of Theorem \ref{IntFinGen}} \lbl{SecPfFinGen} $ $

In this subsection we prove Theorem \ref{IntFinGen}. Let us remind
its formulation.
\begin{theorem} \lbl{FinGen}
Let $(G,H)$ be an $F$-spherical pair. Suppose that for any
irreducible smooth representation $\rho$ of $G(F)$ we have
\begin{equation} \lbl{FinMult}
\dim\Hom_{H(F)}(\rho|_{H(F)}, \bC) < \infty .
\end{equation}
Then for any \Dima{compact open} 
subgroup $K<G(F)$, $\Sc(G(F)/H(F))^K$ is a
finitely generated module over the Hecke algebra
$\mathcal{H}_K(G(F))$.
\end{theorem}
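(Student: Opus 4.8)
The plan is to deduce the theorem from Bernstein's finite‑generation criterion, Lemma \ref{VKFinGen}, applied to the smooth representation $V:=\Sc(G(F)/H(F))=ind_{H(F)}^{G(F)}(\bC)$. Since a module finitely generated over $Z(\mathcal{H}_K(G(F)))$ is automatically finitely generated over $\mathcal{H}_K(G(F))$, it suffices to verify the hypothesis of that lemma: for every parabolic $P<G$ with reductive quotient $M$ and every irreducible cuspidal representation $\rho$ of $M(F)$, the $\cO(\Psi_M)$‑module $\Hom_{G(F)}(i_{GM}(\Psi(\rho)),\Sc(G(F)/H(F)))$ is finitely generated. I would prove this in two steps, handling first the case $P=G$ and then reducing the general case to it.

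\textbf{Step 1 (the parabolic is $G$).} Here one shows: if $(L,S)$ is an $F$‑spherical pair with $\dim\Hom_{S(F)}(\sigma|_{S(F)},\bC)<\infty$ for every irreducible \emph{cuspidal} representation $\sigma$ of $L(F)$, then $\Hom_{L(F)}(\Psi(\rho),\Sc(L(F)/S(F)))$ is finitely generated over $\cO(\Psi_L)$ for every irreducible cuspidal $\rho$. The point is that cuspidality of $\rho$ gives, by Harish‑Chandra's theorem (Theorem \ref{CuspComp}), that $\rho|_{L^1}$ is compact, so $\Psi(\rho)=ind_{L^1}^{L(F)}(\rho|_{L^1})$ is projective (Corollary \ref{CuspProj}); since $L^1$ is an open normal subgroup of $L(F)$, Frobenius reciprocity for open subgroups yields
$$\Hom_{L(F)}\!\bigl(\Psi(\rho),\Sc(L/S)\bigr)\cong\Hom_{L^1}\!\bigl(\rho|_{L^1},\Sc(L(F)/S(F))|_{L^1}\bigr)=\bigoplus_{\cO}\Hom_{L^1}(\rho|_{L^1},\Sc(\cO)),$$
the sum running over the (open and closed) $L^1$‑orbits $\cO$ on $L(F)/S(F)$. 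These orbits form a single orbit of the lattice $L(F)/L^1$, so, identifying $\cO(\Psi_L)$ with $\bC[L(F)/L^1]$, the right‑hand side is the module induced from the group algebra $\bC[\Lambda]$ of the stabiliser $\Lambda$ of one orbit $\cO_0\cong L^1/(L^1\cap S(F))$; hence finite generation over $\cO(\Psi_L)$ reduces to finite generation of the single summand $\Hom_{L^1}(\rho|_{L^1},\Sc(\cO_0))$ over $\bC[\Lambda]$. This last reduction is the main obstacle: one must identify $\Hom_{L^1}(\rho|_{L^1},\Sc(\cO_0))$ together with its $\bC[\Lambda]$‑module structure (via Shapiro's lemma) in terms of the family of spaces $\Hom_{S(F)}((\rho\otimes\psi)|_{S(F)},\bC)$, $\psi\in\Psi_L$, and then combine their finiteness — given by the hypothesis, since each twist $\rho\otimes\psi$ is cuspidal — with the Bernstein‑centre structure results (items \ref{1}--\ref{4}) in order to upgrade pointwise finiteness to finite generation over the Noetherian ring $\cO(\Psi_L)$.

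\textbf{Step 2 (general $P$).} Let $\overline{P}$ be the opposite parabolic, $\overline{U}$ its unipotent radical, so $M=\overline{P}/\overline{U}$. By Bernstein's second adjointness (item \ref{2}) $i_{GM}$ is left adjoint to $\overline{r}_{GM}=(\,\cdot\,|_{\overline{P}(F)})_{\overline{U}(F)}$, so $\Hom_{G(F)}(i_{GM}(\Psi(\rho)),\Sc(G(F)/H(F)))\cong\Hom_{M(F)}(\Psi(\rho),\overline{r}_{GM}(\Sc(G(F)/H(F))))$. Since $(G,H)$ is $F$‑spherical, $\overline{P}(F)$ has only finitely many orbits on $G(F)/H(F)$; stratifying by them gives a finite filtration of $\Sc(G(F)/H(F))$, as a $\overline{P}(F)$‑module, with graded pieces $ind_{\overline{P}(F)\cap z_iH(F)z_i^{-1}}^{\overline{P}(F)}(\bC)$. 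Applying the exact functor $(\,\cdot\,)_{\overline{U}(F)}$ and then $\Hom_{M(F)}(\Psi(\rho),-)$ (exact, as $\Psi(\rho)$ is projective), and observing that each $\overline{U}(F)$‑coinvariant space is, up to a character of $M(F)$, isomorphic to $\Sc(M(F)/\overline{Q}_i(F))$ with $\overline{Q}_i=(z_iH z_i^{-1})_M$, I reduce to finite generation of each $\Hom_{M(F)}(\Psi(\rho),\Sc(M(F)/\overline{Q}_i(F)))$ over $\cO(\Psi_M)$. Now $z_iHz_i^{-1}$ is $G(F)$‑conjugate to $H$, so $(G,z_iHz_i^{-1})$ is again $F$‑spherical with finite multiplicities, and Lemma \ref{FinMultCuspDesc}, applied to this pair and to the parabolic $\overline{P}$, shows that $(M,\overline{Q}_i)$ is $F$‑spherical with finite multiplicities on all cuspidal representations of $M(F)$. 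Thus Step 1 applies to $(L,S)=(M,\overline{Q}_i)$ — the extra character twist being harmless — and the hypothesis of Lemma \ref{VKFinGen} is verified, proving the theorem.
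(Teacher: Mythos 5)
Your outline matches the paper's: the case $P=G$ via cuspidality and projectivity of $\Psi(\rho)$, then the reduction of a general parabolic to that case via second adjointness, stratification of $G(F)/H(F)$ by $\overline{P}(F)$-orbits, projectivity of $\Psi(\rho)$, and Lemma~\ref{FinMultCuspDesc}. Step~2 is essentially correct as you describe it. The problem is in Step~1, and it is a genuine gap rather than a detail.

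You correctly flag that ``the main obstacle'' is upgrading the hypothesis $\dim\Hom_{S(F)}((\rho\otimes\psi)|_{S(F)},\bC)<\infty$ for each unramified $\psi$ to finite generation of $\Hom_{L^1}(\rho|_{L^1},\Sc(\cO_0))$ over the lattice group algebra. But your proposed resolution --- ``combine their finiteness \dots\ with the Bernstein-centre structure results (items~\ref{1}--\ref{4})'' --- does not give this. Items~\ref{1}--\ref{4} concern the block decomposition of $\cM(G,K)$ and the ring $\mathrm{End}(i_{GM}(\Psi(\rho)))$; they say nothing about how to pass from fiberwise finite dimension of a family of $\Hom$-spaces over $\Psi_L$ to finite generation of the corresponding $\cO(\Psi_L)$-module. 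Pointwise finiteness alone does not formally imply finite generation: some genuinely new input is needed. In the paper this input is supplied by Lemmas~\ref{CA} and~\ref{fg}: one views $\Hom_{L'}(\rho,\bC[L/L'])$ as the intersection of a $\bC(L/L')$-subspace of $\bC(L/L')^{\bN}$ with the lattice $\bC[L/L']^{\bN}$, proves finite dimension over the fraction field by a specialization argument (choosing a countable subfield and transcendental parameters $\pi_i$ so that the generic fiber specializes to $\Hom_{L'}(\rho,\chi)$ for some character $\chi$), and then applies a commutative-algebra lemma that such an intersection is finitely generated when the base ring is a Noetherian domain. None of this is in your proposal, and it is not a consequence of the Bernstein-centre facts you cite.

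A second, smaller omission: your Step~1 works directly with $L^1$ and $S(F)\cap L^1$, but the group that actually acts on the relevant $\Hom$-space by translations is $(S(F)Z(L(F)))\cap L^1$, not $S(F)\cap L^1$, and comparing this action with the $S(F)$-action introduces a scalar coming from the central character of $\rho$. The paper handles this via the pair $H'=H(F)\cap G^1$, $H''=(H(F)Z(G(F)))\cap G^1$ and Lemma~\ref{G1}, which in particular requires that twists of $\rho$ by unramified characters of $G(F)$ are again cuspidal irreducible. Your sketch does not address this central-character issue, though it is needed to make the reduction to Lemma~\ref{fg} legitimate.
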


\begin{remark}
Conjecturally, any $F$-spherical pair satisfies the condition
(\ref{FinMult}). In \cite{Del} and in \cite{SV} this is proven for
wide classes of spherical pairs, which include all symmetric pairs
over fields of characteristic different from 2.
\end{remark}

We will need several lemmas and definitions.

\nir{

\begin{lemma} \lbl{G1}
Let $(G,H)$ be an $F$-spherical pair, and denote $\widetilde{H}=H(F)Z(G(F))\cap G^1$.
Suppose that for any smooth
(respectively cuspidal) irreducible representation $\rho$ of
$G(F)$ we have $\dim\Hom_{H(F)}(\rho|_{H(F)}, \bC) < \infty$. Then
for any smooth (respectively cuspidal) irreducible representation
$\rho$ of $G(F)$ and for every character $\widetilde{\chi}$ of $\widetilde{H}$ whose restriction to $H(F)\cap G^1$ is trivial, we have
$$\dim\Hom_{\widetilde{H}}(\rho|_{\widetilde{H}}, \widetilde{\chi}) < \infty.$$
\end{lemma}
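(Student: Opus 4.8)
The idea is to reduce the statement about $\widetilde H = H(F)Z(G(F))\cap G^1$ to the hypothesis about $H(F)$ by exploiting the fact that $\widetilde H$ differs from (a finite-index relative of) $H(F)$ only by a central contribution, on which representations of $G(F)$ behave via a single character. First I would set up notation: let $Z := Z(G(F))$, and note $\widetilde H$ contains $H(F)\cap G^1$ as a normal subgroup (it is normal in $H(F)$, hence in $\widetilde H$), with abelian quotient. The character $\widetilde\chi$ is trivial on $H(F)\cap G^1$ by hypothesis, so $\widetilde\chi$ factors through the abelian group $\widetilde H/(H(F)\cap G^1)$.

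The key step is to relate $\Hom_{\widetilde H}(\rho,\widetilde\chi)$ to $\Hom_{H(F)}(\rho',\bC)$ for a suitable twist $\rho'$ of $\rho$. Since $\rho$ is irreducible, $Z$ acts on $\rho$ by a central character $\omega_\rho$; so any extension of $\widetilde\chi$ to a character of $H(F)Z$ that agrees with $\omega_\rho$ on $Z$ — which exists because $\widetilde\chi$ and $\omega_\rho$ agree on the intersection $Z\cap\widetilde H = Z\cap G^1$ up to the freedom we have, after possibly enlarging — lets us identify $\widetilde\chi$-equivariant functionals on $\rho$ restricted to $\widetilde H$ with functionals on $\rho\otimes\chi^{-1}$ restricted to $H(F)$ for an appropriate character $\chi$ of $H(F)$ extending a character of $H(F)\cap G^1$. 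More carefully: extend $\widetilde\chi$ to a character $\mu$ of $\widetilde H\cdot Z = H(F)Z\cap(G^1 Z)=G_0\cap H(F)Z$ (using that $\widetilde H$ already contains $H(F)\cap G^1$ and that $G^1Z/G^1$ is a lattice, so characters extend), then further to a character $\chi$ of $H(F)$ — possible since $H(F)/(H(F)\cap G^1Z)$ is finitely generated abelian and we only need a set-theoretic extension of a character of a subgroup of a finitely generated abelian group, which always exists over $\bC^\times$. Then $\rho\otimes\chi^{-1}$ is again irreducible smooth (respectively cuspidal, since twisting by a character of $G(F)$ restricted from — wait, $\chi$ is only on $H(F)$), so this last point needs care.

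Because $\chi$ lives only on $H(F)$ and not on $G(F)$, I cannot literally twist $\rho$; instead I would argue directly on the level of $\Hom$-spaces. Fix $\rho$ irreducible with central character $\omega_\rho$. The restriction $\widetilde\chi$ together with $\omega_\rho$ determines a character of $\widetilde H\cdot Z$ provided they agree on $\widetilde H\cap Z = G^1\cap Z$; but $\widetilde\chi$ is only constrained to be trivial on $H(F)\cap G^1$, not on $G^1\cap Z$, so in general they need not agree, and there may be no common extension. In that case $\Hom_{\widetilde H}(\rho,\widetilde\chi)$ could still be studied by decomposing $\widetilde H\cdot Z$-isotypic pieces: $\rho|_{\widetilde H}$ embeds into $\rho|_{\widetilde H Z}$ and we can average over the finite (or compact, since central characters land in a torus) group of characters of $\widetilde H Z$ restricting to $\widetilde\chi$ on $\widetilde H$. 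This shows $\Hom_{\widetilde H}(\rho,\widetilde\chi)$ is a summand of $\bigoplus_\nu \Hom_{\widetilde H Z}(\rho,\nu)$ over finitely many such $\nu$ with $\nu|_Z=\omega_\rho$, and each such $\Hom_{\widetilde H Z}(\rho,\nu)$ is then a space of $H(F)$-equivariant functionals against a character of $H(F)\cap G^1 Z$ extended to $H(F)$. The main obstacle — the step I expect to cost the most — is exactly this bookkeeping: showing finiteness of $\Hom_{H(F)}(\rho, \chi)$ for a character $\chi$ of $H(F)$ trivial on $H(F)\cap G^1$ follows from finiteness of $\Hom_{H(F)}(\rho,\bC)$; this should follow because $\chi$ extends to an unramified-type character, or because $\rho$ and $\rho\otimes(\text{extension of }\chi^{-1})$ have the same Bernstein data when $\chi$ extends to $G(F)$, and when it does not, one replaces $\rho$ by $ind$ from $H(F)G^1$ and uses that the relevant quotient is a lattice so only finitely many twists occur. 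I would therefore organize the proof as: (1) reduce to the case where $\rho$ is replaced by finitely many central twists over $\widetilde H Z$; (2) extend $\widetilde\chi$ compatibly with $\omega_\rho$ to $H(F)\cdot Z$ and then arbitrarily to $H(F)$; (3) observe $\Hom_{\widetilde H}(\rho,\widetilde\chi)\subseteq\Hom_{H(F)Z}(\rho,\text{char}) \hookrightarrow \Hom_{H(F)}(\rho,\chi)$; (4) deduce finiteness of the latter from the hypothesis applied to $\rho$, invoking that twisting by a character of $H(F)$ does not affect finite-dimensionality of $H(F)$-coinvariants once one notes $\Hom_{H(F)}(\rho,\chi)=\Hom_{H(F)}(\rho\otimes\chi^{-1},\bC)$ and $\rho\otimes\chi^{-1}$ — interpreted as a representation of $H(F)$ only, but that is all that enters — ranges over a family controlled by the lattice $H(F)/(H(F)\cap G^1)$, on which there are only finitely many relevant twists modulo those extending to $G(F)$, for which the hypothesis directly applies.
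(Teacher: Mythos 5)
Your plan follows essentially the same route as the paper: use the central character $\omega_\rho$ of $\rho$ and Frobenius reciprocity to convert $\widetilde\chi$-equivariant functionals on $\rho|_{\widetilde H}$ into $\theta$-equivariant functionals on $\rho|_{H(F)}$ for characters $\theta$ of $H(F)$ trivial on $H(F)\cap G^1$, then extend such $\theta$ to characters $\Theta$ of $G(F)$ and apply the hypothesis to the twist $\rho\otimes\Theta^{-1}$. The two places where you hedge are in fact where the paper's argument is cleanest, and neither needs the fallbacks you suggest.

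First, you worry that $\widetilde\chi$ and $\omega_\rho$ may fail to agree on $\widetilde H\cap Z(G(F))=G^1\cap Z(G(F))$ and that this could obstruct the reduction. The paper does not need to arrange any agreement: after Frobenius reciprocity to $(H(F)Z(G(F)))\cap G_0=\widetilde H Z(G(F))$, the subspace of $Ind_{\widetilde H}^{\widetilde H Z(G(F))}\widetilde\chi$ on which $Z(G(F))$ acts by $\omega_\rho$ is at most one-dimensional, and if it is zero the Hom-space vanishes outright and the lemma is trivial for that $\rho$. So the central-character bookkeeping resolves itself. Second, you are unsure whether $\theta$ extends from $H(F)$ to $G(F)$ and propose replacing $\rho$ by a compact induction from $H(F)G^1$ when it does not; this is unnecessary, because $H(F)/(H(F)\cap G^1)$ is a sub-lattice of the lattice $G(F)/G^1$, so every character of $H(F)$ trivial on $H(F)\cap G^1$ extends to a character of $G(F)$. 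Once $\Theta$ exists, $\rho\otimes\Theta^{-1}$ is again smooth (resp.\ cuspidal) irreducible, and $\Hom_{H(F)}(\rho,\theta)=\Hom_{H(F)}((\rho\otimes\Theta^{-1})|_{H(F)},\bC)$ is finite-dimensional by hypothesis; there is no need to reason about ``finitely many relevant twists modulo those extending to $G(F)$.'' Finally, note that the passage from $H(F)\cap G_0$ to $H(F)$ is handled in the paper by one more Frobenius reciprocity, using that $H(F)/(H(F)\cap G_0)$ is finite abelian, so that $Ind_{H(F)\cap G_0}^{H(F)}\tau$ is a finite direct sum of characters $\theta$ — this is the precise mechanism behind your looser ``finitely many'' claim.
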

}

\begin{proof} 
\nir{
Let $\rho$ be a smooth (respectively cuspidal) irreducible
representation of $G(F)$, and let $\widetilde{\chi}$ be a character of $\widetilde{H}$ whose restriction to $H(F)\cap G^1$ is trivial.
\[
\Hom_{\widetilde{H}}\left (\rho|_{\widetilde{H}}, \widetilde{\chi} \right ) =
\Hom_{(H(F) Z(G(F)))\cap G_0} \left (\rho|_{(H(F) Z(G(F)))\cap G_0}, Ind_{\widetilde{H}}^{(H(F) Z(G(F)))\cap G_0}\widetilde{\chi}\right) .
\]
Since $$H(F)Z(G(F))\cap G_0=\widetilde{H}Z(G(F))\cap G_0=\widetilde{H}Z(G(F)),$$
the subspace of $Ind_{\widetilde{H}}^{(H(F) Z(G(F)))\cap G_0}\widetilde{\chi}$ that transforms under $Z(G(F))$ according to the central character of $\rho$ is at most one dimensional. If this subspace is trivial, then the lemma is clear. Otherwise, denote it by $\tau$. Since $H(F)\cap G^1$ is normal in $H(F)Z(G(F))$, we get that the restriction of $Ind_{\widetilde{H}}^{(H(F) Z(G(F)))\cap G_0}\widetilde{\chi}$ to $H(F)\cap G^1$ is trivial, and hence that $\tau|_{H(F)\cap G^1}$ is trivial. Hence $\Hom_{\widetilde{H}}\left (\rho|_{\widetilde{H}}, \widetilde{\chi} \right )$ is equal to
\[
\Hom_{(H(F) Z(G(F)))\cap G_0} \left (\rho|_{(H(F) Z(G(F)))\cap G_0}, \tau\right)=\Hom_{H(F)\cap G_0} \left (\rho|_{H(F) \cap G_0}, \tau|_{H(F)\cap G_0}\right)=
\]
\[
=\Hom_{H(F)}\left(\rho|_{H(F)},Ind_{H(F)\cap G_0}^{H(F)}\tau|_{H(F)\cap G_0}\right).
\]
Since $H(F) / H(F)\cap G_0$ is finite and abelian, the representation $Ind_{H(F)\cap G_0}^{H(F)}\tau|_{H(F)\cap G_0}$ is a finite direct sum of characters of $H(F)$, the restrictions of all to $H(F)\cap G^1$ are trivial. Any character $\theta$ of $H(F)$ whose restriction to $H(F)\cap G^1$ is trivial can be extended to a character of $G(F)$, because $H(F)/(H(F)\cap G^1)$ is a sub-lattice of $G(F)/G^1$. Denoting the extension by $\Theta$, we get that
\[
\Hom_{H(F)}\left(\rho|_{H(F)},\theta\right)=\Hom_{H(F)}\left((\rho\otimes\Theta^{-1})|_{H(F)},\bC\right),
\]
but $\rho\otimes\Theta^{-1}$ is again smooth (respectively cuspidal) irreducible representation of $G(F)$, so this last space is finite-dimensional.
}

\end{proof}

\Rami{
 \begin{lemma} \lbl{CA}
 Let $A$ be a commutative unital Noetherian algebra without zero divisors and let $K$ be its field of fractions. Let $K^\bN$ be the space of all sequences of elements of $K$. Let $V$ be a finite dimensional subspace of $K^\bN$ and let $M:=V \cap A^\bN$. Then $M$ is finitely generated.
\end{lemma}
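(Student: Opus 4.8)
The plan is to reduce the statement to the Noetherian property of $A$ applied to the finitely generated module $A^d$, where $d:=\dim_K V$. Note first that $M=V\cap A^\bN$ is an $A$-submodule of $A^\bN\subset K^\bN$ (it is the intersection inside $K^\bN$ of the $A$-modules $V$ and $A^\bN$). So it suffices to embed $M$ $A$-linearly, as a submodule, into $A^d$.

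First I would construct such an embedding by choosing finitely many coordinates. For $n\in\bN$ let $p_n\colon K^\bN\to K$ be the $n$-th coordinate functional, and consider the restrictions $p_n|_V\in V^*$. These functionals separate the points of $V$: if $v\in V$ is a nonzero sequence then $p_n(v)\neq 0$ for some $n$. A family of linear functionals on a finite-dimensional vector space that separates points must span the dual space — its common kernel is the annihilator of its linear span, and this common kernel is zero only if the span is all of $V^*$. Hence one can choose indices $n_1,\dots,n_d\in\bN$ for which $p_{n_1}|_V,\dots,p_{n_d}|_V$ form a basis of $V^*$; equivalently, the projection $\pi:=(p_{n_1},\dots,p_{n_d})\colon K^\bN\to K^d$ restricts to a $K$-linear isomorphism $V\xrightarrow{\ \sim\ }K^d$.

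Next I would conclude. The map $\pi$ is $A$-linear and sends $A^\bN$ into $A^d$, so $\pi(M)\subseteq A^d$ is an $A$-submodule. Since $A$ is Noetherian, $A^d$ is a Noetherian $A$-module, hence $\pi(M)$ is finitely generated. Because $\pi|_V$ is injective, the restriction $\pi|_M\colon M\to\pi(M)$ is an isomorphism of $A$-modules, and pulling back a finite generating set of $\pi(M)$ gives one for $M$.

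There is essentially one point that requires care, namely the elementary linear-algebra fact that point-separating functionals on $V$ span $V^*$; this is exactly where the finite-dimensionality of $V$ enters in an essential way, and it is the only potential obstacle. (The hypothesis that $A$ has no zero divisors is used only so that the field of fractions $K$ is defined; it plays no further role in the argument.) Everything else is a formal manipulation with the Noetherian hypothesis.
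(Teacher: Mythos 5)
Your proof is correct and follows essentially the same route as the paper: the paper likewise observes that, by finite-dimensionality of $V$, finitely many coordinate projections already restrict to an injection on $V$, then pushes $M$ into a finite power of $A$ and invokes Noetherianity. The only cosmetic difference is that you pick $d=\dim_K V$ carefully chosen coordinates to get an isomorphism $V\cong K^d$, while the paper simply takes an initial segment $\{1,\dots,n\}$ on which the projection is injective; both give an $A$-linear embedding $M\hookrightarrow A^n$ and the rest is identical.
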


\nir{
\begin{proof} Since $A$
\Rami{
does not have zero divisors}, $M$ injects into $K^\bN$. There is a number $n$ such that the projection of $V$ to $K^{\{1,\ldots n\}}$ is injective. Therefore, $M$ injects into $A^{\{1,\ldots n\}}$, and, since $A$ is Noetherian, $M$ is finitely generated.
\end{proof}
}
 \nir{
 \begin{lemma} \lbl{fg}
Let $M$ be an $l$-group, let $L\subset M$ be a closed subgroup, and let $L' \subset L$ be an open normal subgroup of $L$ such that $L/L'$  is a lattice. Let $\rho$ be a smooth representation of $M$ of countable dimension. Suppose that for any character $\chi$ of $L$ whose restriction to $L'$ is trivial we have $$\dim\Hom_{L}(\rho|_{L}, \chi) < \infty.$$
Consider $\Hom_{L'}(\rho, \Sc(L/L'))$ as a representation of $L$, where $L$ acts by $((hf)(x))([y])=(f(x))([y h])$. Then this representation is finitely generated. \end{lemma}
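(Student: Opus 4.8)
The plan is to decompose $\rho$ using the lattice $L/L'$ and reduce to a finite-generation statement over a Noetherian commutative ring, where Lemma \ref{CA} applies. First I would observe that $\Hom_{L'}(\rho,\Sc(L/L'))$, as a representation of $L$, is naturally $\cO(\widehat{L/L'})$-linear: since $L/L'$ is a lattice, its group algebra $\bC[L/L']$ is the ring of regular functions on a torus $T := \widehat{L/L'} \cong (\bC^\times)^r$, and $\Sc(L/L') = \bC[L/L']$ carries the regular action of $L/L'$, so $\Hom_{L'}(\rho, \Sc(L/L'))$ becomes a module over $A := \cO(T) = \bC[L/L']$. Concretely, decomposing the $L'$-coinvariants (equivalently, using that $\Sc(L/L') = \bigoplus$ of characters after Fourier transform is not available over $\bC$ unless $L/L'$ is finite, so instead one keeps the whole group-algebra module structure) one writes $\Hom_{L'}(\rho,\Sc(L/L')) \cong \Hom_{L'}(\rho,\bC) \otimes_{\bC} A$ as an $A$-module — but this tensor-product identification must respect the $L$-action, which twists by the characters. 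The key point is that $A$ is a Noetherian domain with fraction field $K$, and $T$ acts on everything.

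**The core estimate via finite multiplicity.** The hypothesis $\dim\Hom_L(\rho|_L,\chi) < \infty$ for every character $\chi$ of $L$ trivial on $L'$ is exactly a statement about every closed point of $\Spec A$: such characters $\chi$ are parametrized (up to the finitely many choices involved in extending a character of $L'$) by points $t \in T$. I would show that the $A$-module $N := \Hom_{L'}(\rho,\Sc(L/L'))^{L'\text{-smooth part}}$, or rather its appropriate $L$-finite subspace, has the property that its fiber at each point $t \in T$ — i.e., $N \otimes_A \bC_t$, or better $\Hom$ into the residue field — is finite-dimensional, bounded by $\dim\Hom_L(\rho|_L,\chi_t)$. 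Because $\rho$ has countable dimension, $N$ embeds into $K^{\bN}$ in a way compatible with the fibers: choosing a countable basis of $\rho$ gives coordinates, and the finite-multiplicity hypothesis says that at the generic point the relevant space is finite-dimensional over $K$. So $N \otimes_A K$ is a finite-dimensional $K$-vector space $V$, and $N$ sits inside $V \cap A^{\bN}$.

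**Invoking Lemma \ref{CA} and finishing.** Once I have exhibited $N$ as a submodule of $V \cap A^{\bN}$ with $V$ finite-dimensional over $K = \Frc(A)$ and $A$ a Noetherian domain, Lemma \ref{CA} gives directly that $N$ is finitely generated over $A = \bC[L/L']$. Since $\bC[L/L']$ is itself finitely generated as an algebra over $\bC$, and the $L$-action on $\Hom_{L'}(\rho,\Sc(L/L'))$ factors through $L/L'$ acting through this ring together with the pointwise structure, finite generation over $A$ upgrades to finite generation over the group $L$ (or equivalently over the relevant Hecke-type algebra): the $A$-module generators are also $L$-module generators because $A$ is generated by the image of $L/L'$. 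The one subtlety I would be careful about is that $\Hom_{L'}(\rho,\Sc(L/L'))$ need not be a smooth $L$-representation a priori, so one should either work with its smooth part or note that the natural map makes it a union of finitely generated pieces; I would arrange the argument so that the module we feed into Lemma \ref{CA} is exactly the one we want.

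**Main obstacle.** The hard part will be making precise the passage from "finite multiplicity at every character $\chi$ trivial on $L'$" to "the module $N$ lands inside $V \cap A^{\bN}$ with $\dim_K V < \infty$" — that is, controlling the generic fiber. This requires a genericity/semicontinuity argument: the function $t \mapsto \dim\Hom_L(\rho|_L, \chi_t)$ is built from a module over $A$, and one needs that finiteness at all closed points forces finiteness of the generic rank, which is where the countable-dimensionality of $\rho$ and the Noetherian property are both essential (a non-Noetherian or uncountable situation could fail). Threading this through while keeping track of the $L$-equivariance (the twisting by $\chi$ as $t$ varies) is the delicate bookkeeping that the proof must handle.
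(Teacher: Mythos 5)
Your overall skeleton matches the paper's: view $\Hom_{L'}(\rho,\Sc(L/L'))$ as a module over $A=\bC[L/L']$, embed it in $A^{\bN}$ via a countable basis of $\rho$, reduce via Lemma~\ref{CA} to showing that $\Hom_{L'}(\rho,\bC(L/L'))$ is finite-dimensional over $K=\Frc(A)=\bC(L/L')$, and bring the finite-multiplicity hypothesis at characters $\chi$ of $L$ trivial on $L'$ to bear.

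However, there is a genuine gap, and it is precisely at the point you flag as the ``main obstacle.'' You propose to pass from finiteness at every closed point $t\in T$ to finiteness at the generic point by a ``genericity/semicontinuity'' argument, and you assert that the Noetherian property and countable dimensionality make this work. This is not a theorem: for an infinite linear system (equivalently, an arbitrary, not-yet-known-to-be-coherent $A$-module), having finite-dimensional specializations at all closed points does not by itself force the generic fiber to be finite-dimensional, and semicontinuity of fiber dimension is a statement about finitely presented modules --- which is exactly what we are trying to prove, so it cannot be invoked. The Noetherian hypothesis enters only in Lemma~\ref{CA}, not here.

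The paper's actual mechanism is different and more delicate: because $\rho$ is smooth and of countable dimension and $L'$ is separable, the $L'$-intertwining condition is cut out by countably many linear equations; therefore one may choose a \emph{countable} subfield $K_0\subset\bC$ containing all their coefficients, so that the solution space over $\bC(L/L')$ is a base change of a space $W$ over $K_0(L/L')=K_0(t_1,\dots,t_n)$. Since $\bC$ has infinite transcendence degree over the countable field $K_0$, one can pick $\pi_1,\dots,\pi_n\in\bC$ algebraically independent over $K_0$, giving a field embedding $\iota\colon K_0(L/L')\hookrightarrow\bC$. Under $\iota$ the \emph{generic} fiber over $K_0(L/L')$ becomes the fiber at one particular \emph{closed} point, namely the character $\chi$ with $\chi(g_i)=\pi_i$, and there the hypothesis $\dim\Hom_L(\rho|_L,\chi)<\infty$ applies directly. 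So the countable-dimension hypothesis is used to reduce the coefficient field to something over which $\bC$ is of infinite transcendence degree, not to run a semicontinuity argument. Without this specialization trick your outline does not close.
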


\begin{proof} By assumption, the action of $L$ on $\Hom_{L'}(\rho,\Sc(L/L'))$ factors through $L/L'$. Since $L/L'$ is discrete, $\Sc(L/L')$ is the group algebra $\bC[L/L']$. We want to show that $\Hom_{L'}(\rho,\bC[L/L'])$ is a finitely generated module over $\bC[L/L']$.

Let $\bC(L/L')$ be the fraction field of $\bC[L/L']$. Choosing a countable basis for the vector space of $\rho$, we can identify any $\bC$-linear map from $\rho$ to $\bC[L/L']$ with an element of $\bC[L/L']^\bN$. Moreover, the condition that the map intertwines the action of $L/L'$ translates into a collection of linear equations that the tuple in $\bC[L/L']^\bN$ should satisfy. Hence, $\Hom_{L'}(\rho,\bC[L/L'])$ is the intersection of the $\bC(L/L')$-vector space $\Hom_{L'}(\rho,\bC(L/L'))$ and $\bC[L/L']^\bN$. By Lemma \ref{CA}, it suffices to prove that $\Hom_{L'}(\rho,\bC(L/L'))$ is finite dimensional over $\bC(L/L')$.

Since 
\Dima{$L'$ is separable, and $\rho$ is smooth and of countable dimension,}
there are only countably many linear equations defining $\Hom_{L'}(\rho,\bC(L/L'))$; denote them by $\phi_1,\phi_2,\ldots\in\left(\bC(L/L')^\bN\right)^*$. Choose a countable subfield $K\subset\bC$ that contains all the coefficients of the elements of $\bC(L/L')$ that appear in any of the $\phi_i$'s. If we define $W$ as the
\Rami{
$K(L/L')$}-linear subspace of
\Rami{
$K(L/L')^\bN$}
\Dima{defined} by the $\phi_i$'s, then $\Hom_{L'}(\rho,\bC(L/L'))=W\otimes_{K(L/L')} \bC(L/L')$, so $\dim_{\bC(L/L')}\Hom_{L'}(\rho,\bC(L/L'))=\dim_{K(L/L')}W$.

Since $L/L'$ is a lattice generated by, say, $g_1,\ldots,g_n$, we get that $K(L/L')=K(t_1^{\pm 1},\ldots,t_n^{\pm 1})$ \Rami{$=K(t_1,\ldots,t_n)$}. Choosing elements $\pi_1,\ldots,\pi_n\in\bC$ such that $tr.deg_K(K(\pi_1,\ldots,\pi_n))=n$, we get an injection $\iota$ of
$K(L/L')$ into $\bC$. As before, we get that if we denote the $\bC$-vector subspace of $\bC^\bN$ cut by the equations $\iota(\phi_i)$ by $U$, then $\dim_{K(L/L')}W=\dim_{\bC}U$. However, $U$ is isomorphic to $\Hom_{L'}(\rho,\chi)$, where $\chi$ is the character \Rami{of $L/L'$} such that $\chi(g_i)=\pi_i$. By assumption, this last vector space is finite dimensional.
 \end{proof}}
}
Now we are ready to prove Theorem \ref{FinGen}.
\begin{proof}[Proof of Theorem \ref{FinGen}]
By Lemma \ref{VKFinGen} it is enough to show that for any
parabolic $P<G$ and any irreducible cuspidal representation $\rho$
of $M(F)$ (where $M$ denotes the Levi quotient of $P$), 
$ \Hom(i_{GM}(\Psi(\rho)),\Sc(G(F)/H(F)))$ is a finitely generated
module over $\cO(\Psi_M)$.

\Rami{
%
%
Step 1. Proof for the case $P=G$.\\
We have
$$\Hom_{G(F)}(i_{GM}(\Psi(\rho)),\Sc(G(F)/H(F)))=
\Hom_{G(F)}(\Psi(\rho),\Sc(G(F)/H(F))) =
\Hom_{G^1}(\rho,\Sc(G(F)/H(F))).$$
Here we consider the space $\Hom_{G^1}(\rho,\Sc(G(F)/H(F)))$ with

the natural action of $G$. Note that $G^1$ acts trivially and
hence this action gives rise to an action of $G/G^1$, which gives
the $\cO(\Psi_G)$ - module structure.

Now consider the subspace $$V:=\Hom_{G^1}(\rho,\Sc(G^1/(H(F) \cap
G^1))) \subset \Hom_{G^1}(\rho,\Sc(G(F)/H(F))).$$  It generates
$\Hom_{G^1}(\rho,\Sc(G(F)/H(F)))$ as a representation of $G(F)$,
and therefore also as an $\cO(\Psi_G)$ - module. Note that $V$ is $H(F)$ invariant. Therefore it is enough to show that $V$ is finitely generated over $H(F).$
Denote $H':=H(F) \cap
G^1$ and $H'':= (H(F) Z(G(F)))\cap G^1$.
Note that $$\Sc(G^1/H') \cong ind_{H''}^{G^1}(\Sc(H''/H')) \subset Ind_{H''}^{G^1}(\Sc(H''/H')).$$ Therefore $V$ is canonically embedded into $\Hom_{H''}(\rho,\Sc(H''/H'))$. The action of $H$ on $V$ is naturally extended to an action $\Pi$ on $\Hom_{H''}(\rho,\Sc(H''/H'))$ by $$((\Pi(h)(f))(v))([k])=f(h^{-1}v)([h^{-1}kh]).$$ Let $\Xi$ be the action of $H''$ on $\Hom_{H''}(\rho,\Sc(H''/H'))$ as described in Lemma \Dima{\ref{fg}}, i.e. $$((\Xi(h)(f))(v))([k])=f(v)([kh]).$$ By Lemmas \Dima{\ref{fg}} and \Dima{\ref{G1}} it is enough to show that for any $h \in H''$ there exist an $h' \in H$ and a scalar $\alpha$ s.t. $$\Xi(h)=\alpha \Pi(h').$$ In order to show this let us decompose $h$ to a product $h=zh'$ where $h' \in H$ and $z\in Z(G(F))$. Now
\begin{multline*}
((\Xi(h)(f))(v))([k])=f(v)([kh])=f(h^{-1}v)([h^{-1}kh])=f(h^{'-1}z^{-1}v)([h^{'-1}kh'])=\\=
\alpha f(h'^{-1}v)([h^{'-1}kh'])= \alpha((\Pi(h')(f))(v))([k]),
\end{multline*}
where $\alpha$ is the scalar with which
$z^{-1}$ acts on $\rho$.}
%

\Rami{Step 2}. Proof in the general case.

\nir{
\begin{multline*}
\Hom_{G(F)}(i_{GM}(\Psi(\rho)),\Sc(G(F)/H(F)))=
\Hom_{M(F)}(\Psi(\rho),\overline{r}_{MG}(\Sc(G(F)/H(F)))) = \\ =
\Hom_{M(F)}(\Psi(\rho),((\Sc(G(F)/H(F)))|_{\overline{P}(F)})_{\overline{U}(F)}),
\end{multline*}
where $\overline{U}$ is the unipotent radical of $\overline{P}$, the parabolic opposite to $P$. Let $\{Y_i\}_{i=1}^n$
be the orbits of $\overline{P}(F)$ on $G(F)/H(F)$. We know that there exists
a filtration on $(\Sc(G(F)/H(F)))|_{\overline{P}(F)}$ such that the
associated graded components are isomorphic to $\Sc(Y_i)$.
Consider the corresponding filtration on
$((\Sc(G(F)/H(F)))|_{\overline{P}(F)})_{\overline{U}(F)}$. Let $V_i$ be the associated
graded components of this filtration. We have a natural surjection
$\Sc(Y_i)_{\overline{U}} \twoheadrightarrow V_i$. In order to prove that
$\Hom_{\Dima{M}(F)}(\Psi(\rho),((\Sc(G(F)/H(F)))|_{\overline{P}(F)})_{\overline{U}(F)})$ is
finitely generated it is enough to prove that
$\Hom_{M(F)}(\Psi(\rho),V_i)$ is finitely generated. Since
$\Psi(\rho)$ is a projective object of $\cM(M(F))$ (by Corollary
\ref{CuspProj}), it is enough to show that
$\Hom_{M(F)}(\Psi(\rho),\Sc(Y_i)_{\overline{U}(F)})$ is finitely generated. Denote
$Z_i := \overline{U}(F) \setminus Y_i $. It is easy to see that $Z_i \cong
M(F)/ ((H_i)_M(F))$, where $H_i$ is some conjugation of $H$.
}
Now the assertion follows from the previous step using Lemma
\ref{FinMultCuspDesc}.
\end{proof}

\subsection{Homologies of $l$-groups}\lbl{subsec:homologies}$ $

The goal of this subsection is to prove Lemma \ref{FinDimH1H0} and
Lemma \ref{LemShap}.

We start with some generalities on abelian categories.
\begin{definition}
Let $\cC$ be an abelian category. We call a family of objects $\cA
\subset Ob(\cC)$ \textbf{generating} family if for any object $X
\in Ob(\cC)$ there exists an object $Y \in \cA$ and an epimorphism
$Y \twoheadrightarrow X$.
\end{definition}

\begin{definition}
Let $\cC$ and $\cD$ be abelian categories and $\cF: \cC \to \cD$
be a right-exact additive functor. A family of objects $\cA
\subset Ob(\cC)$ is called \textbf{$\cF$-adapted} if it is
generating
\Rami{, closed under direct sums
and for any exact sequence $0 \to A_1 \to A_2 \to ...
$ with $A_i \in \cA$, the sequence $0 \to \cF(A_1) \to \cF(A_2) \to...$ is also exact.
}


\Dima{For example,
\Rami{a generating,
closed under direct sums system consisting of}
projective  objects
is $\cF$-adapted for any \Dima{right-exact} 
functor $\cF$.}
\Rami{For an $\l$-group $G$ the system of objects consisting of direct sums of copies of $\Sc(G)$ is an example of such system.}
\end{definition}
The following results are well-known.
\begin{theorem}
Let $\cC$ and $\cD$ be abelian categories and $\cF: \cC \to \cD$
be a right-exact additive functor. Suppose that there exists an
$\cF$-adapted family $\cA \subset Ob(\cC)$. Then $\cF$ has derived
functors.
\end{theorem}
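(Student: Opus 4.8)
The plan is to construct, for each $n\ge 0$, the left derived functor $L_n\cF\colon\cC\to\cD$ — left, because $\cF$ is right exact — by resolving objects by members of $\cA$ and passing to homology, the point being that the three properties packaged into ``$\cF$-adapted'' are exactly what make this well defined without assuming the objects of $\cA$ are projective. First I would record that, since $\cA$ is generating, every $X\in\cC$ admits an $\cA$-resolution $\cdots\to A_1\to A_0\to X\to 0$ (exact, all $A_i\in\cA$): choose an epimorphism $A_0\twoheadrightarrow X$, then $A_1\twoheadrightarrow\ker(A_0\to X)$, and iterate. More generally, every bounded-above complex over $\cC$ receives a quasi-isomorphism from a bounded-above complex with all terms in $\cA$, built stage by stage, closure of $\cA$ under direct sums being used to splice the stages. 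I then set $L_n\cF(X):=H_n\bigl(\cF(A_\bullet)\bigr)$ for a chosen $\cA$-resolution $A_\bullet$ of $X$.

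The heart of the argument — and the step I expect to be the main obstacle — is that this is independent of the chosen resolution and functorial in $X$. Since the $A_i$ need not be projective, one cannot lift morphisms to chain maps between resolutions directly; instead I would argue as follows. If $s\colon P_\bullet\to Q_\bullet$ is a quasi-isomorphism between bounded-above complexes with terms in $\cA$, then $\mathrm{Cone}(s)$ is an exact bounded-above complex whose degree $n$ term $P_{n-1}\oplus Q_n$ again lies in $\cA$; by the defining exactness-preservation property of an $\cF$-adapted family, $\cF(\mathrm{Cone}(s))=\mathrm{Cone}(\cF(s))$ is exact, i.e. $\cF(s)$ is a quasi-isomorphism. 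Any two $\cA$-resolutions of $X$ are dominated by a common third one mapping quasi-isomorphically to both compatibly with the augmentations, so applying $\cF$ and the previous sentence identifies $H_n(\cF(-))$ on them canonically; the same device handles a morphism $f\colon X\to Y$. Equivalently, and more cleanly, one shows that the localization of the homotopy category $K^-(\cA)$ at its quasi-isomorphisms is equivalent to $D^-(\cC)$ (essential surjectivity is the complex-level resolution statement above), and defines $L\cF\colon D^-(\cC)\to D^-(\cD)$ as the term-wise application of $\cF$ transported across this equivalence — legitimate precisely by the cone computation — with $L_n\cF(X)$ the $n$-th homology of $L\cF$ applied to $X$ in degree $0$.

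It remains to produce the $\delta$-functor structure. Given a short exact sequence $0\to X'\to X\to X''\to 0$, I would use that it underlies a distinguished triangle $X'\to X\to X''\to X'[1]$ in $D^-(\cC)$, apply the triangulated functor $L\cF$, and read off the long exact sequence of the resulting triangle in $D^-(\cD)$; naturality of the connecting maps is then automatic, which avoids any horseshoe-type lifting that would need projectivity. Finally $L_0\cF=\cF$ because right-exactness of $\cF$ applied to $A_1\to A_0\to X\to 0$ gives $H_0(\cF(A_\bullet))=\mathrm{coker}(\cF A_1\to\cF A_0)=\cF X$, and every $A\in\cA$ is acyclic for $L_{>0}\cF$ (use the resolution $A\xrightarrow{\mathrm{id}}A$), whence $(L_n\cF)_{n\ge 0}$ is the universal homological $\delta$-functor extending $\cF$. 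The genuinely delicate point throughout is the one flagged above: lacking projectivity, well-definedness must be routed through mapping cones of quasi-isomorphisms between complexes with terms in $\cA$, and that is exactly where both the closure of $\cA$ under direct sums and the exactness-preservation hypothesis are indispensable.
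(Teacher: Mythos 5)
The paper gives no proof of this theorem; it is explicitly labeled ``well-known,'' the intended reference being the standard theory of adapted classes (cf.\ Gelfand--Manin, \emph{Methods of Homological Algebra}, Ch.~III, \S 6). Your argument is precisely that standard construction and is correct: resolve by complexes with terms in $\cA$, use closure under direct sums together with the exactness-preservation hypothesis to see, via the mapping cone, that $\cF$ carries quasi-isomorphisms between such complexes to quasi-isomorphisms, and then obtain well-definedness, functoriality, and the long exact sequence either directly or, as you also say, by transporting $\cF$ along the equivalence $K^-(\cA)[\mathrm{qis}^{-1}]\simeq D^-(\cC)$.

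Two small remarks. The ``common third resolution dominating both'' step that you flag as the delicate point in fact works in the strict sense, with no calculus of fractions and no lifting against non-projective objects: given augmented $\cA$-resolutions $\epsilon_P\colon P_\bullet\to X$ and $\epsilon_Q\colon Q_\bullet\to X$, the degreewise fibre product $R_\bullet := P_\bullet\times_X Q_\bullet$ has projections to $P_\bullet$ and $Q_\bullet$ that are surjective quasi-isomorphisms (the kernel of $R_\bullet\to P_\bullet$ is the acyclic complex $\ker\epsilon_Q$, and symmetrically), and one then replaces $R_\bullet$ by an $\cA$-complex mapping quasi-isomorphically onto it using that $\cA$ is generating. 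Second, the paper's acyclicity condition is typeset as ``$0\to A_1\to A_2\to\cdots$,'' whereas the complexes that actually arise for a right-exact $\cF$ --- resolutions and their cones --- are bounded on the other side, $\cdots\to A_1\to A_0\to 0$; you have (correctly) read the condition in the direction appropriate to the left-derived setting, which is clearly what is meant.
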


\begin{lemma} \lbl{HomLeib}
Let $\cA$, $\cB$ and $\cC$ be abelian categories. Let $\cF:\cA \to
\cB$ and $\cG:\cB \to \cC$ be right-exact additive functors.
Suppose that both $\cF$ and $\cG$ have derived functors.

(i) Suppose that $\cF$ is exact. Suppose also that there exists a
class $\cE \subset Ob(\cA)$ which is $\cG \circ \cF$-adapted and
such that $\cF(X)$ is $\cG$-acyclic for any $X \in \cE$. Then \nir{the functors $L^i(\cG \circ \cF)$ and $L^i\cG \circ \cF$ are isomorphic.}

(ii) Suppose that there exists a class $\cE \subset Ob(\cA)$ which
is $\cG \circ \cF$-adapted and $\cF$-adapted and such that
$\cF(X)$ is $\cG$-acyclic for any $X \in \cE$. Let $Y \in \cA$ be
an $\cF$-acyclic object. Then \nir{$L^i(\cG \circ \cF)(Y)$ is (naturally) isomorphic to $L^i\cG( \cF(Y))$.}

(iii) Suppose that $\cG$ is exact. Suppose that there exists a
class $\cE \subset Ob(\cA)$ which is $\cG \circ \cF$-adapted and
$\cF$-adapted. Then  \nir{the functors $L^i(\cG \circ \cF)$ and $\cG \circ L^i\cF$ are isomorphic.}
\end{lemma}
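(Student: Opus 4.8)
The statement to prove is Lemma \ref{HomLeib}, a standard result about composition of derived functors (a Grothendieck-spectral-sequence-style comparison, but in the degenerate situations where one of the functors is exact or an object is acyclic). Here is how I would organize the argument.

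\medskip

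\textbf{Setup and strategy.} The plan is to compute all the derived functors using $\cF$-adapted and $\cG\circ\cF$-adapted resolutions, exploiting the hypothesis that a common adapted class $\cE$ exists. Recall that if $\cE$ is $\Phi$-adapted then for any object $X$ one may choose a resolution $\cdots\to E_1\to E_0\to X\to 0$ with all $E_i\in\cE$, and $L^i\Phi(X)$ is computed as the homology of $\Phi(E_\bullet)$; moreover any two such resolutions are chain-homotopy equivalent in a way compatible with $\Phi$, so the result is independent of choices and functorial. I would first record this as the basic computational device, citing the preceding theorem that guarantees derived functors exist.

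\medskip

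\textbf{Proof of (i).} Take $X\in Ob(\cA)$ and choose a resolution $E_\bullet\to X$ with $E_i\in\cE$. Since $\cE$ is $\cG\circ\cF$-adapted, $L^i(\cG\circ\cF)(X)=H_i(\cG(\cF(E_\bullet)))$. On the other hand, since $\cF$ is exact, $\cF(E_\bullet)\to\cF(X)$ is still a resolution; and since each $\cF(E_i)$ is $\cG$-acyclic (by hypothesis on $\cE$), and the $\cG$-acyclic objects form a $\cG$-adapted class, this resolution computes $L^i\cG(\cF(X))=H_i(\cG(\cF(E_\bullet)))$. The two expressions literally coincide, and chasing the functoriality of resolutions shows the identification is natural in $X$; hence $L^i(\cG\circ\cF)\cong L^i\cG\circ\cF$.

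\medskip

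\textbf{Proof of (ii) and (iii).} For (ii): let $Y$ be $\cF$-acyclic and pick $E_\bullet\to Y$ with $E_i\in\cE$; since $\cE$ is $\cF$-adapted, $L^i\cF(Y)=H_i(\cF(E_\bullet))$, and $Y$ being $\cF$-acyclic means this homology is $\cF(Y)$ in degree $0$ and vanishes otherwise — i.e. $\cF(E_\bullet)\to\cF(Y)$ is exact. Each $\cF(E_i)$ is $\cG$-acyclic, so $\cF(E_\bullet)$ is a $\cG$-acyclic resolution of $\cF(Y)$, whence $L^i\cG(\cF(Y))=H_i(\cG(\cF(E_\bullet)))$. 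But $\cE$ is also $\cG\circ\cF$-adapted, so the same complex computes $L^i(\cG\circ\cF)(Y)$; the two agree naturally. For (iii): pick $E_\bullet\to X$ with $E_i\in\cE$ (using that $\cE$ is both $\cF$- and $\cG\circ\cF$-adapted). Then $L^i(\cG\circ\cF)(X)=H_i(\cG(\cF(E_\bullet)))$, and since $\cG$ is exact it commutes with taking homology, so $H_i(\cG(\cF(E_\bullet)))=\cG(H_i(\cF(E_\bullet)))=\cG(L^i\cF(X))$, naturally in $X$.

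\medskip

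\textbf{Main obstacle.} None of the steps is deep; the only genuine point requiring care is verifying that ``$\cG$-acyclic objects form a $\cG$-adapted family'' and hence may be used to compute $L^\bullet\cG$ — i.e. the standard fact that acyclic resolutions compute derived functors. This needs the closure of $\cE$ under direct sums and the exactness condition in the definition of adapted family to propagate correctly, plus a small argument (dimension shifting, or a double-complex/mapping-cone comparison of the two resolutions) to see $\cF(E_\bullet)$ really is a valid computational resolution for $\cG$. I would isolate that as the one lemma to check carefully and let the rest follow by directly matching complexes.
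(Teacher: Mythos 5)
The paper does not prove Lemma \ref{HomLeib}; it states it as a well-known fact (the degenerate forms of the Grothendieck spectral sequence argument) immediately after the phrase ``The following results are well-known,'' so there is no proof in the paper against which to compare. Your argument is a correct and standard proof. The structure is right in all three parts: take a resolution $E_\bullet\to X$ by objects of $\cE$; observe that $\cG\circ\cF$-adaptedness of $\cE$ lets you compute $L^i(\cG\circ\cF)(X)$ as $H_i(\cG\cF(E_\bullet))$; then relate this same complex to the target. In (i), exactness of $\cF$ turns $\cF(E_\bullet)$ into a resolution of $\cF(X)$ by $\cG$-acyclics; in (ii), $\cF$-adaptedness of $\cE$ plus $\cF$-acyclicity of $Y$ achieves the same thing; in (iii), exactness of $\cG$ commutes with passage to homology. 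You also correctly isolate the one genuinely substantive input, namely that a resolution by $\cG$-acyclic objects computes $L^\bullet\cG$. That fact is indeed standard (dimension shifting via the short exact sequences $0\to Z_{i}\to A_i\to Z_{i-1}\to 0$ and the long exact sequence for $L^\bullet\cG$, using $L^j\cG(A_i)=0$ for $j>0$), and it is the right lemma to quote. One small remark worth being careful about: you do not need (and should not assume) that the $\cG$-acyclic objects themselves form a $\cG$-adapted \emph{family} in the paper's sense; all you need is that a single resolution by acyclics computes the derived functor, which follows from the dimension-shifting argument alone given that $L^\bullet\cG$ already exists. With that phrasing, your proof is clean.
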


 \begin{definition}
 Let $G$ be an $l$-group. For any smooth representation $V$ of $G$ denote $\oH_i(G, V):=L^iCI_G(V)$.
Recall that $CI_G$ denotes the coinvariants functor.
 \end{definition}


\begin{proof}[Proof of Lemma \ref{LemShap}]
Note that $\cF(X) = ind_{G_x}^G \cF_x$. Note also that
$ind_{G_x}^G$ is an exact functor, and $CI_{G_x} = CI_{G} \circ
ind_{G_x}^G$. The lemma follows now from Lemma \ref{HomLeib}(i).
\end{proof}

\begin{lemma} \lbl{AFG}
Let $L$ be a \nir{lattice.} 
Let $V$ be a linear
space. Let $L$ act on $V$ by a character. Then
$$\oH_1(L,V) =  \oH_0(L,V) \otimes_{\bC}(L \otimes_{\bZ}\bC).$$
\end{lemma}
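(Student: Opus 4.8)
The plan is to compute $\oH_*(L,V)$ directly using the Koszul-type resolution of the trivial module coming from the group ring of a lattice. Write $L \cong \bZ^n$ with generators $g_1,\dots,g_n$, so that $\bC[L] \cong \bC[t_1^{\pm1},\dots,t_n^{\pm1}]$. The homology $\oH_i(L,V) = \operatorname{Tor}^{\bC[L]}_i(\bC, V)$ can be computed from the Koszul complex $K_\bullet = \bigwedge^\bullet(\bC[L]^n) \to \bC$, which resolves the trivial $\bC[L]$-module $\bC$ by free $\bC[L]$-modules; the differential on generators is given by multiplication by $(t_j - 1)$. Tensoring over $\bC[L]$ with $V$ gives the complex $\bigwedge^\bullet(V^n)$ whose differentials are built from the operators $(g_j - 1)$ acting on $V$.

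Now I would use the hypothesis that $L$ acts on $V$ through a character $\chi\colon L \to \bC^\times$. Then each $g_j$ acts as the scalar $\chi(g_j)$, so the operator $(g_j-1)$ on $V$ is multiplication by the scalar $c_j := \chi(g_j) - 1$. There are two cases. If some $c_j \neq 0$, then $\oH_0(L,V) = V/(\sum_j (g_j-1)V) = 0$, and one checks that the Koszul complex with at least one invertible scalar differential is exact, so $\oH_i(L,V) = 0$ for all $i$; in particular the claimed identity $\oH_1(L,V) = \oH_0(L,V)\otimes_\bC (L\otimes_\bZ \bC)$ holds trivially ($0 = 0$). If instead $c_j = 0$ for all $j$, i.e. $\chi$ is trivial, then all Koszul differentials after tensoring with $V$ vanish, so $\oH_i(L,V) = \bigwedge^i(\bC^n)\otimes_\bC V$. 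In degree $0$ this is $V = \oH_0(L,V)$, and in degree $1$ it is $\bC^n \otimes_\bC V \cong (L\otimes_\bZ \bC)\otimes_\bC \oH_0(L,V)$, which is exactly the asserted formula.

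Alternatively, and perhaps more cleanly, I would prove the statement by induction on $n = \operatorname{rank}(L)$ using the Hochschild–Serre / Shapiro-type decomposition: writing $L = L' \times \bZ$ with $L'$ of rank $n-1$, and using that for the free abelian group $\bZ$ one has $\oH_0(\bZ, W) = W_{\bZ}$, $\oH_1(\bZ, W) = W^{\bZ}$ (the kernel of $g-1$), and $\oH_i(\bZ,W) = 0$ for $i\geq 2$, one assembles $\oH_1(L,V)$ from $\oH_1(L',-)$ and $\oH_0(L',-)$ applied to $V$. Since $V$ is a character, all the relevant maps $g_j - 1$ are scalars and the spectral sequence degenerates; the bookkeeping reproduces the exterior power $\bigwedge^1(L\otimes\bC)$. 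Either way, the computation is essentially formal once the character hypothesis collapses every differential to a scalar.

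The only genuine subtlety — and the step I expect to need the most care — is the exactness of the Koszul complex over $\bC[L]$ (equivalently, that it really is a free resolution of $\bC$). This is the standard fact that $t_1-1,\dots,t_n-1$ form a regular sequence in the Laurent polynomial ring $\bC[t_1^{\pm1},\dots,t_n^{\pm1}]$ generating the augmentation ideal; granting this, everything else is a direct inspection of scalar maps on $V$, split into the two cases ($\chi$ trivial or not) described above.
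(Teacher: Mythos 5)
Your argument is correct. The paper itself gives no proof of this lemma (it merely declares the proof ``straightforward''), so there is nothing to compare against; the Koszul computation you give is exactly the standard route. For a discrete group $L$, the derived coinvariants functor $\oH_i(L,-)$ is $\operatorname{Tor}^{\bC[L]}_i(\bC,-)$, and since $\bC[L]\cong\bC[t_1^{\pm1},\dots,t_n^{\pm1}]$ with $t_1-1,\dots,t_n-1$ a regular sequence generating the augmentation ideal, the Koszul complex is a free resolution of $\bC$. Tensoring with $V$ turns each differential into the scalar $\chi(g_j)-1$, and your two-case analysis (some scalar nonzero, forcing all homology to vanish; all scalars zero, giving $\oH_i\cong\bigwedge^i(\bC^n)\otimes V$) then yields the identity, with $\bC^n$ canonically identified with $L\otimes_\bZ\bC$. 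The one thing worth saying explicitly, which you do flag, is the regularity of the sequence $t_j-1$ in the Laurent ring; this is immediate since killing $t_1-1,\dots,t_j-1$ leaves a Laurent polynomial ring in the remaining variables, which is a domain in which $t_{j+1}-1$ is a nonzerodivisor.
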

The proof of this lemma is straightforward.

\begin{lemma} \lbl{HomCoinv}
Let $L$ be an $l$-group and $L'<L$ be a subgroup. Then\\
(i) for any representation $V$ of $L$ we have
$$\oH_i(L',V)=L^i\mathcal{F}(V),$$ where $\mathcal{F}: \cM(L) \to
Vect$ is the functor defined by $\mathcal{F}(V)=V_{L'}$.\\
(ii) Suppose that $L'$ is normal. Let $\mathcal{F'}: \cM(L) \to
\cM(L/L')$ be the functor defined by $\mathcal{F'}(V)=V_{L'}$. Then
for any representation $V$ of $L$ we have
$\oH_i(L',V)=L^i\mathcal{F'}(V).$
\end{lemma}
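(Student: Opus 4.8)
The plan is to deduce both parts from Lemma \ref{HomLeib}, by factoring $\mathcal{F}$ and $\mathcal{F}'$ through an exact functor and choosing a convenient adapted family in $\cM(L)$. We may and do assume that $L'$ is closed in $L$.

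For (i), write $\mathcal{F}=CI_{L'}\circ\operatorname{Res}$, where $\operatorname{Res}\colon\cM(L)\to\cM(L')$ is restriction, which is exact. By Lemma \ref{HomLeib}(i), applied with $\cF=\operatorname{Res}$ and $\cG=CI_{L'}$, it is enough to exhibit a family $\cE$ of objects of $\cM(L)$ that is $\mathcal{F}$-adapted and such that $\operatorname{Res}(X)$ is $CI_{L'}$-acyclic, i.e. $\oH_{>0}(L',X)=0$, for every $X\in\cE$; the conclusion is then $L^i\mathcal{F}\cong L^iCI_{L'}\circ\operatorname{Res}=\oH_i(L',-)$. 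I would take $\cE$ to consist of arbitrary direct sums of copies of the left regular representation $\Sc(L)$ of $L$. This class is generating (there is the standard surjection $\bigoplus_{v\in V}\Sc(L)\twoheadrightarrow V$ for any $V\in\cM(L)$), it is closed under direct sums, and it consists of projective objects of $\cM(L)$: indeed $\Sc(L)=\varinjlim_K\Sc(L/K)$ over the compact open subgroups $K<L$, each $\Sc(L/K)=ind_K^L(\bC)$ is a compact representation and hence projective by Theorem \ref{CompProj}, and the transition maps $\Sc(L/K)\hookrightarrow\Sc(L/K')$ (for $K'\subset K$) are split injections of $L$-representations, so the colimit is again projective. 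Being a generating family of projectives closed under direct sums, $\cE$ is adapted for the right-exact functor $\mathcal{F}$.

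It remains to verify that $\operatorname{Res}(X)$ is $CI_{L'}$-acyclic for $X\in\cE$; since $\oH_*$ commutes with direct sums this reduces to $\oH_{>0}(L',\Sc(L))=0$ for the left translation action of $L'$. Here the key point is that, $L'$ being closed, $L'\backslash L$ is an $l$-space and the quotient map $q\colon L\to L'\backslash L$ admits a continuous section over each member of a partition of $L'\backslash L$ into compact open subsets; therefore $\Sc(L)$, viewed as a representation of $L'$, is isomorphic to a direct sum of copies of the left regular representation $\Sc(L')$ of $L'$, which is a projective object of $\cM(L')$ by the same argument as above, hence $CI_{L'}$-acyclic. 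I expect this last geometric input — the existence of local sections of $q$ — to be the only step that is not purely formal; it is standard, and in the settings where the lemma is applied one deals with finitely many orbits, where the required splitting is immediate.

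For (ii), let $O\colon\cM(L/L')\to Vect$ be the forgetful functor; it is exact and $O\circ\mathcal{F}'=\mathcal{F}$. I would apply Lemma \ref{HomLeib}(iii) with $\cF=\mathcal{F}'$ and $\cG=O$: the class of direct sums of copies of $\Sc(L)$ (equivalently, of the $\Sc(L/K)$) is a generating family of projectives of $\cM(L)$ closed under direct sums, hence adapted for every right-exact functor out of $\cM(L)$, in particular for both $\mathcal{F}'$ and $\mathcal{F}$. Lemma \ref{HomLeib}(iii) then yields $L^i\mathcal{F}\cong O\circ L^i\mathcal{F}'$, and combined with part (i) this gives $\oH_i(L',V)=L^i\mathcal{F}'(V)$, the residual $L/L'$-module structure on the right-hand side being the extra content of (ii).
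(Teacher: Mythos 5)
Your argument follows the same route as the paper: factor $\mathcal{F}=CI_{L'}\circ Res_{L'}^L$ through the exact restriction functor, apply Lemma~\ref{HomLeib}(i) with the adapted family of direct sums of $\Sc(L)$ (using that $\Sc(L)$ restricts to a projective, hence $CI_{L'}$-acyclic, object of $\cM(L')$), and deduce (ii) from (i) by composing $\mathcal{F}'$ with the forgetful functor and invoking Lemma~\ref{HomLeib}(iii). The paper's own proof is a compressed version of exactly this, simply asserting that $\Sc(L)$ is projective in $\cM(L')$.

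Where you fill in that assertion, one step is incorrect: $\Sc(L/K)=ind_K^L(\bC)$ is \emph{not} a compact representation of $L$ when $L$ is noncompact, so Theorem~\ref{CompProj} does not apply to it. (Already for $L=\bZ$ and $K=\{0\}$, pairing $\delta_0\in\Sc(\bZ)$ against the constant function $1$ --- which lies in the smooth dual since it is $L$-invariant --- produces the matrix coefficient identically equal to $1$, which is not compactly supported.) Projectivity of $ind_K^L\bC$ should instead be obtained directly from the adjunction: for $K$ compact open, $ind_K^L$ is left adjoint to restriction, so $\Hom_L(ind_K^L\bC,W)\cong W^K$, and $W\mapsto W^K$ is exact because $K$ is compact. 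With this repair the remainder of your argument --- that $\Sc(L)$ is a colimit of these projectives along split transition maps, hence projective; that its restriction to the closed subgroup $L'$ is a direct sum of copies of $\Sc(L')$ via local sections of $L\to L'\backslash L$; and that $L^i\mathcal{F}\cong O\circ L^i\mathcal{F}'$ by Lemma~\ref{HomLeib}(iii) --- is sound and matches the paper's intent.
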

\begin{proof}
(i) Consider the restriction functor $Res_{L'}^L: \cM(L) \to
\cM(L')$. Note that it is exact. Consider also the functor
$\cG:\cM(L') \to Vect$ defined by $\cG(\rho):=\rho_{L'}$. Note
that $\cF=\cG \circ Res_{L'}^L$. The
\Rami{assertion}
follows now from Lemma
\ref{HomLeib}(i)
\Rami{using the fact that $\Sc(L)$ is a projective object in $\cM(L')$}.\\
(ii) follows from (i) in a similar way, but using part (iii) of
Lemma \ref{HomLeib} instead part (i).
\end{proof}

\begin{lemma} \lbl{CuspAcyc}
Let $G$ be a reductive group and $H<G$ be a subgroup. Consider the
functor $$\mathcal{F}: \cM(G(F)) \to \cM(H(F)/(H(F)\cap G^1))
\text{ defined by }\mathcal{F}(V)=V_{H(F)\cap G^1}.$$ Then any \nir{finitely generated}
cuspidal representation of $G(F)$ is an $\mathcal{F}$-acyclic
object.
\end{lemma}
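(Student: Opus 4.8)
The plan is to reduce the statement, via the homological bookkeeping of Lemma \ref{HomLeib} and Lemma \ref{HomCoinv}, to a claim purely about projective objects of $\cM(G^1)$, and then to settle that claim by resolving projectives by "free" modules $\Sc(G^1/K)$ and applying Shapiro's lemma together with the (trivial) vanishing of higher homology of a profinite group over $\bC$.

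Here is the reduction. Let $\rho$ be a finitely generated cuspidal representation of $G(F)$, and set $L:=H(F)\cap G^1$. One first observes that $\mathcal{F}(V)=V_L$ depends only on $V|_{G^1}$; that the restriction functors $\cM(G(F))\to\cM(G^1)$ and $\cM(G(F))\to\cM(H(F))$ are exact; and that by Theorem \ref{CuspComp} together with Theorem \ref{CompProj} (equivalently, by Corollary \ref{CuspProj}(i)) the restriction $\rho|_{G^1}$ is already a \emph{projective} object of $\cM(G^1)$. One also needs that the restriction to $G^1$ of a projective object of $\cM(G(F))$ is again projective; this is immediate from the explicit projective generators $\Sc(G(F)/K)$, since $G(F)/K$ is discrete and $G^1$ is open in $G(F)$, so $\Sc(G(F)/K)|_{G^1}$ is a direct sum of modules $\Sc(G^1/(G^1\cap xKx^{-1}))$ with $G^1\cap xKx^{-1}$ compact open in $G^1$. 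Combining these facts with Lemma \ref{HomLeib}(i)--(ii) and Lemma \ref{HomCoinv}, and using that the forgetful functor $\cM(H(F)/L)\to Vect$ is exact (so that $\mathcal{F}$-acyclicity is detected on underlying vector spaces), the lemma is reduced to the following claim: for every projective object $P$ of $\cM(G^1)$ one has $\oH_i(L,P)=0$ for all $i>0$.

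To prove the claim, write $P$ as a direct summand of a direct sum $\bigoplus_\alpha\Sc(G^1/K_\alpha)$ of projective generators, with each $K_\alpha<G^1$ compact open. Since $\oH_i(L,-)$ is additive and commutes with arbitrary direct sums, it suffices to show $\oH_i(L,\Sc(G^1/K))=0$ for $i>0$ and $K<G^1$ compact open. Now $G^1/K$ is discrete, so as a representation of $L$ it decomposes over the $L$-orbits as $\bigoplus_j\Sc(L/L_j)$, where each $L_j=L\cap x_jKx_j^{-1}$ is a compact open subgroup of $L$. By Shapiro's lemma (Lemma \ref{LemShap}, applied to $L$ acting transitively on $L/L_j$ with the constant sheaf $\bC$) we get $\oH_i(L,\Sc(L/L_j))\cong\oH_i(L_j,\bC)$, which vanishes for $i>0$ because $L_j$ is profinite: over $\bC$ the category of smooth representations of a profinite group is semisimple, so its coinvariants functor is exact. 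Summing over $j$, then over $\alpha$, then passing to the summand $P$, proves the claim, and with it the lemma.

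I expect the only delicate point to be the reduction in the second paragraph: one must check carefully that "$\rho$ is $\mathcal{F}$-acyclic" really is equivalent to "$\oH_i(L,\rho)=0$ for $i>0$", by matching up the derived functors of the relevant compositions as in Lemma \ref{HomLeib} and Lemma \ref{HomCoinv}. None of the individual ingredients is hard, but the composition of functors needs to be tracked. Everything after the reduction is the standard argument — projectives are summands of compact inductions, apply Shapiro, profinite groups have no higher homology in characteristic $0$ — and presents no obstacle.
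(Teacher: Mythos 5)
Your proof is correct and takes essentially the same route as the paper: reduce via Lemma \ref{HomLeib} (and exactness of the two forgetful restrictions) to the fact that $\rho|_{G^1}$, which is projective in $\cM(G^1)$ by Corollary \ref{CuspProj}, is acyclic for the $L$-coinvariants functor $\cG$. The one place you do extra work is the last paragraph: the paper simply notes that a projective object is automatically acyclic for the left derived functors of \emph{any} right-exact additive functor (take the trivial resolution $0\to P\to P\to 0$), so the Shapiro-plus-profinite-vanishing argument, while correct and a perfectly concrete way to see it, is not needed. The ``delicate reduction'' you flagged is in fact fine and matches the paper's use of Lemma \ref{HomLeib}(ii) and (iii).
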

\begin{proof}
Consider the restriction functors $$Res_{1}^{H(F)/(H(F)\cap G^1)}:
\cM(H(F)/(H(F)\cap G^1)) \to Vect$$ and $$Res_{G^1}^{G(F)}:
\cM(G(F)) \to \cM(G^1).$$ Note that they are exact. Consider also
the functor $\cG:\cM(G^1) \to Vect$ defined by
$\cG(\rho):=\rho_{G^1 \cap H(F)}$. Denote $\cE:= \cG \circ
Res_{G^1}^{G(F)}$. Note that $\cE= Res_{1}^{H(F)/(H(F)\cap G^1)}
\circ \cF$.

$$\xymatrix{\parbox{40pt}{$\cM(G(F))$}\ar@{->}^{\cF \quad \quad
\quad \quad}[r]\ar@{->}^{\cE}[dr]\ar@{->}_{Res_{G^1}^{G(F)} }[d] &
\parbox{110pt}{$\cM(H(F)/(H(F)\cap
G^1))$}\ar@{->}^{Res_{1}^{H(F)\cap G^1}}[d]\\
\parbox{30pt}{$\cM(G^1)$}\ar@{->}^{\cG}[r] &
\parbox{25pt}{$Vect$}}$$

Let $\pi$ be a cuspidal finitely generated representation of $G(F)$. By Corollary
\ref{CuspProj}, $Res_{G^1}^{G(F)}(\pi)$ is projective and hence
$\cG$-acyclic. Hence by Lemma \ref{HomLeib}(ii) $\pi$ is
$\cE$-acyclic. Hence by Lemma \ref{HomLeib}(iii) $\pi$ is
$\cF$-acyclic.

\end{proof}

\begin{lemma} \lbl{HomQGroups}
Let $L$ be an l-group and $L'<L$ be a normal subgroup. Suppose
that $\oH_i(L',\bC)=0$ for all $i>0$. Let $\rho$ be a
representation of $L/L'$. Denote by $Ext(\rho)$ the natural
representation of $L$ obtained from $\rho$. Then
$\oH_i(L/L',\rho)=\oH_i(L,Ext(\rho))$.
\end{lemma}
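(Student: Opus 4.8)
*Let $L$ be an $l$-group and $L'<L$ be a normal subgroup. Suppose that $\oH_i(L',\bC)=0$ for all $i>0$. Let $\rho$ be a representation of $L/L'$. Denote by $Ext(\rho)$ the natural representation of $L$ obtained from $\rho$. Then $\oH_i(L/L',\rho)=\oH_i(L,Ext(\rho))$.*The plan is to realize both sides as derived functors of composable functors and apply the Leibniz-type comparison in Lemma \ref{HomLeib}. Write $CI_{L/L'}\colon \cM(L/L') \to Vect$ for the coinvariants functor on $L/L'$, and let $\cF'\colon \cM(L)\to\cM(L/L')$ be the functor $V\mapsto V_{L'}$ from Lemma \ref{HomCoinv}(ii), so that $\oH_i(L',-) = L^i\cF'$ as functors $\cM(L)\to\cM(L/L')$. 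The obvious factorization is $CI_L = CI_{L/L'}\circ \cF'$: a representation of $L$ has the same coinvariants whether we first kill the $L'$-action and then the $L/L'$-action, or kill everything at once. Likewise the functor $V \mapsto Ext(\rho)$ composed with $CI_L$ recovers $CI_{L/L'}(\rho)$ since $Ext(\rho)_{L'} = \rho$ (the $L'$-action on $Ext(\rho)$ is trivial by construction). So the statement amounts to saying that $L^i(CI_{L/L'}\circ \cF')$ evaluated at $Ext(\rho)$ agrees with $L^i CI_{L/L'}$ evaluated at $\cF'(Ext(\rho)) = \rho$.

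The first key step is to produce a class $\cE\subset Ob(\cM(L))$ that is simultaneously $\cF'$-adapted and $(CI_{L/L'}\circ\cF')$-adapted, consisting of objects on which $\cF'$ lands in $CI_{L/L'}$-acyclic (indeed projective) objects; the natural candidate is the class of direct sums of copies of $\Sc(L)$, as already used in the excerpt for $l$-groups. For such a free object $\Sc(L)^{(I)}$ one computes $\cF'(\Sc(L)^{(I)}) = \Sc(L/L')^{(I)}$, which is projective in $\cM(L/L')$, hence $CI_{L/L'}$-acyclic. This class is generating and closed under direct sums, so it is $\cG$-adapted for any right-exact $\cG$, in particular for $\cF'$ and for $CI_{L/L'}\circ\cF'$; this sets up the hypotheses of Lemma \ref{HomLeib}(ii).

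The second key step is to verify that $Ext(\rho)$ is itself $\cF'$-acyclic, i.e.\ that $\oH_i(L', Ext(\rho)) = 0$ for $i>0$. This is exactly where the hypothesis $\oH_i(L',\bC)=0$ for $i>0$ enters: the $L'$-action on $Ext(\rho)$ is trivial, so $Ext(\rho)$ as an $L'$-module is a (possibly infinite) direct sum of copies of the trivial representation $\bC$, and homology commutes with direct sums, giving $\oH_i(L',Ext(\rho)) = \oH_i(L',\bC)\otimes_{\bC}\rho = 0$ for $i>0$. (One uses here that $CI_{L'}$, being a colimit, commutes with arbitrary direct sums, and likewise its derived functors.) Having checked that $\cF'(X)$ is $CI_{L/L'}$-acyclic for $X\in\cE$ and that $Ext(\rho)$ is $\cF'$-acyclic, Lemma \ref{HomLeib}(ii) applies verbatim and yields a natural isomorphism $L^i(CI_{L/L'}\circ\cF')(Ext(\rho)) \cong L^iCI_{L/L'}(\cF'(Ext(\rho)))$, that is, $\oH_i(L,Ext(\rho)) \cong \oH_i(L/L',\rho)$.

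The only genuinely delicate point, and the one I expect to be the main obstacle, is the bookkeeping around infinite direct sums: one must make sure that the chosen adapted class really is closed under the direct sums that occur, that $\cF'$ sends them to projectives in $\cM(L/L')$ (not merely to acyclics), and that the vanishing $\oH_i(L',Ext(\rho))=0$ survives the passage from $\bC$ to an arbitrary-dimensional trivial module — all of which hinge on exactness of filtered colimits in these representation categories and on homology commuting with direct sums. Once that is in place, the argument is a formal application of Lemma \ref{HomLeib}(ii) exactly as in the proof of Lemma \ref{HomCoinv}.
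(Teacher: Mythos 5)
Your proof is correct, but it factors the coinvariants the opposite way from the paper. You write $CI_L = CI_{L/L'}\circ\cF'$ with $\cF'\colon\cM(L)\to\cM(L/L')$ the $L'$-coinvariants functor, take for the adapted class direct sums of copies of $\Sc(L)$, and then must separately verify that $Y=Ext(\rho)$ is $\cF'$-acyclic, which you do by observing that $Ext(\rho)|_{L'}$ is a (possibly infinite) direct sum of trivial modules and that $\oH_i(L',-)$ commutes with direct sums. The paper factors the other way: it writes $CI_{L/L'} = CI_L\circ Ext$, takes for the adapted class direct sums of copies of $\Sc(L/L')\in\cM(L/L')$, and applies Lemma \ref{HomLeib}(ii) with the \emph{inner} functor being $Ext$ and with $Y=\rho$. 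Since $Ext$ is exact, the requirement that $Y$ be acyclic for the inner functor is automatic; the only remaining check is that $Ext(\Sc(L/L'))$ is $CI_L$-acyclic, which is exactly Shapiro's Lemma \ref{LemShap} together with the hypothesis $\oH_i(L',\bC)=0$ for $i>0$ (and, symmetrically, that $\Sc(L/L')$ is $CI_{L/L'}$-acyclic, again by Shapiro). Both proofs are two applications of the same machinery; the extra care you spend on infinite direct sums is precisely the cost of choosing the non-exact functor to be the inner one. Your version is closer in spirit to the usual Lyndon--Hochschild--Serre reduction, while the paper's is a bit shorter because it exploits the exactness of $Ext$ to make the acyclicity of $Y$ free.
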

\begin{proof}
Consider the coinvariants functors $\cE: \cM(L) \to Vect$ and
$\cF: \cM(L/L') \to Vect$ defined by $\cE(V):=V_L$ and
$\cF(V):=V_{L/L'}$. Note that $\cF = \cE \circ Ext$ and $Ext$ is
exact. By Shapiro Lemma (Lemma \ref{LemShap}), $\Sc(L/L')$ is acyclic with respect to
both $\cE$ and $\cF$. The lemma follows now from Lemma
\ref{HomLeib}(ii).
\end{proof}

\begin{remark}
Recall that if $L'=N(F)$ where $N$ is a unipotent algebraic group,
then $\oH_i(L')=0$ for all $i>0$.
\end{remark}

Now we are ready to prove Lemma \ref{FinDimH1H0}
\begin{proof}[Proof of Lemma \ref{FinDimH1H0}]
By Lemma \ref{HomQGroups} we can assume that $G$ is reductive.

Let $\cF:\cM(G(F)) \to Vect$ be the functor defined by
$\cF(V):=V_{H(F)}.$ Let $$\cG:\cM(G(F)) \to \cM(H(F)/(H(F)\cap
G^1))$$ be defined by $$\cG(V):=V_{H(F)\cap G^1}.$$ Let
$$\cE:\cM(H(F)/(H(F)\cap G^1)) \to Vect$$ be defined by
$$\cE(V):=V_{H(F)/(H(F)\cap G^1)}.$$ Clearly, $\cF=\cE \circ \cG$.
By Lemma \ref{CuspAcyc}, $\rho$ is $\cG$-acyclic. Hence by Lemma
\ref{HomLeib}(ii), $L^i\cF(\rho)=L^i\cE(\cG(\rho))$.

$$\xymatrix{\parbox{40pt}{$\cM(G(F))$}\ar@{->}^{\cG \quad \quad \quad \quad}[r] \ar@/^3pc/@{->}^{\cF}[rrr] &
\parbox{105pt}{$\cM(H(F)/(H(F)\cap G^1))$}\ar@/^2pc/@{->}^{\cE}[rr]\ar@{->}^{\cK}[r] &
\parbox{105pt}{$\cM(H(F)/(H(F)\cap G^0))$} \ar@{->}^{\quad \quad \quad \quad \cC}[r] &
\parbox{25pt}{$Vect$}}$$

Consider the coinvariants functors $\cK: \cM(H(F)/(H(F)\cap G^1))
\to \cM(H(F)/(H(F)\cap G^0))$ and $\cC:\cM(H(F)/(H(F)\cap G^0))
\to Vect$ defined by $\cK(\rho):=\rho_{(H(F) \cap G_0)/(H(F)\cap
G^1)}$ and $\cC(\rho):=\rho_{H(F)/(H(F)\cap G^1)}$. Note that $\cE
= \cC \circ \cK$.

 Note that
$\cC$ is exact since the group $H(F)/(H(F)\cap G^1)$ is finite.
Hence by Lemma \ref{HomLeib}(iii), $L^i\cE = \cC \circ L^i\cK$.

Now, by Lemma \ref{HomCoinv},
$$\oH_i(H(F),\rho)=L^i\cF(\rho)=L^i\cE(\cG(\rho))=
\cC(L^i\cK(\cG(\rho)))=\cC(\oH_i((H(F) \cap G_0)/(H(F)\cap
G^1),\cG(\rho))).$$

Hence, by Lemma \ref{AFG}, if $\oH_0(H(F),\rho)$ is finite
dimensional then $\oH_1(H(F),\rho)$ is finite dimensional.
\end{proof}

\section{Uniform Spherical Pairs} \lbl{sec:UniSPairs}

In this section we introduce the notion of uniform spherical pair
and prove Theorem \ref{thm:ModIso}.

\Dima{We follow the main steps of \cite{Kaz}, where the author constructs an isomorphism between the Hecke algebras of a reductive group over close enough local fields. First, he constructs a linear isomorphism between the Hecke algebras, using Cartan decomposition. Then, he shows that for two given elements of the Hecke algebra there exists $m$ such that if the fields are  $m$-close then the product of those elements will be mapped to the product of their images. Then he uses the fact that the Hecke algebras are finitely generated and finitely presented to deduce the theorem.}

\Dima{Roughly speaking, we call a pair $H<G$ of reductive groups a uniform spherical pair if it possesses a relative analog of Cartan decomposition, i.e. a ``nice'' description of the set of double cosets $K_0(G,F)\setminus G(F) / H(F)$ which in some sense does not depend on $F$. We give the precise definition in the first subsection and prove Theorem \ref{thm:ModIso} in the second subsection.}

\subsection{Definitions} \lbl{subsec:UniSpairs}$ $

Let $R$ be a complete and smooth local ring, let $m$ denote its maximal ideal, and let $\pi$ be an element in $m\setminus m^2$. A good example to keep in mind is the ring $\bZ_p[[\pi]]$.  An $(R,\pi)$-local field is a local field $F$ together with an epimorphism of rings $R\to O_F$, such that the image of $\pi$ (which we will continue to denote by $\pi$) is a uniformizer. Denote the collection of all $(R,\pi)$-local fields by $\cF_{R,\pi}$.

Suppose that $G$ is a  reductive group defined and split over $R$. Let $T$ be a fixed split torus, and let $X_*(T)$ be the coweight lattice of $T$. For every $\la\in X_*(T)$ and every $(R,\pi)$-local field $F$, we write $\pi^\la=\la(\pi)\in T(F)\subset G(F)$. We denote the subgroup $G(O_F)$ by $K_0(F)$, and denote its $\ell$'th congruence subgroup by $K_\ell(F)$.

\begin{defn}
Let $F$ be a local field.
Let $X \subset \bA^n_{O_F}$ be a closed subscheme. For any $x,y\in X(F)$, define the valuative distance between $x$ and $y$ to be $\val_F(x,y):=\min\{\val_F(x_i-y_i)\}$. Also, for
any  $x\in X(F)$, define $\val_F(x):=\min\{\val_F(x_i)\}$.
The ball of valuative radius $\ell$ around a point $x$ in
$X(F)$
 will be denoted by $B(x,\ell)(F)$.
\end{defn}

\begin{defn}\lbl{defn:good.pair}

Let $G$ be a split reductive group defined over $R$ and let $H\subset G$ be a smooth reductive subgroup defined over $R$. We say that the pair $(G,H)$ is uniform spherical if there are
\begin{itemize}
\item An $R$-split torus $T\subset G$,
\item An affine embedding $G/H\hookrightarrow \bA^n$.
\item A finite subset $\fX\subset G(R)/H(R)$.
\item A subset $\Upsilon\subset X_*(T)$.
\end{itemize} such that
\begin{enumerate}
\item The map $x\mapsto K_0(F)x$ from $\pi^\Upsilon\fX$ to $K_0(F)\bs G(F)/H(F)$ is onto for every $F\in\cF_{R,\pi}$. \lbl{cond:1}
\item For every $x,y \in \pi^\Upsilon \fX\subset (G/H)(R[\pi^{-1}])$, the closure in $G$ of the $R[\pi^{-1}]$-scheme  \Dima{$$T_{x,y}:=\{g\in G \times_{\Spec(R)} \Spec(R[\pi^{-1}]) | gx=y\}$$} is smooth over $R$. We denote this closure by $S_{x,y}$.
\lbl{cond:connectors}
\item  For every $x \in \pi^\Upsilon \fX$, the valuation $\val_F(x)$ does not depend on $F \in \cF_{R,\pi}.$
\RGRCor{\item  There exists $l_0$ s.t. for any $l>l_0$, for any
$F\in\cF_{R,\pi}$ and for every $x \in \fX$ and $\alpha \in
\Upsilon$ we have $K_l \pi^\alpha K_l x= K_l \pi^\alpha  x $
\lbl{cond:GR}.}
\end{enumerate}

\nir{If $G,H$ are defined over $\bZ$, we say that the pair $(G,H)$ is uniform spherical if, for every $R$ as above, the pair $(G\times_{\Spec(\bZ)}\Spec(R),H\times_{\Spec(\bZ)}\Spec(R))$ is uniform spherical.
}
\end{defn}

In Section \ref{sec:ap} we give two examples of uniform spherical pairs. We will list now several basic properties of uniform spherical pairs.
\RGRCor{In light of the recent developments in the structure theory of symmetric and spherical pairs (e.g. \cite{Del}, \cite{SV}),
we believe that the majority of symmetric pairs and many spherical pairs defined over local fields are specializations of appropriate uniform spherical pairs.}

From now and until the end of the section we fix a uniform spherical pair $(G,H)$. First note that, since $H$ is smooth, the fibers of $G\to G/H$ are smooth. Hence the map $G\to G/H$ is smooth.

\begin{lem}\lbl{lem:SO}  Let $(G,H)$ be a uniform spherical pair.
Let $x,y\in \pi^{\Upsilon} \fX$. Let $F$ be an $(R,\pi)$-local field. Then
$$S_{x,y}(O_F) = T_{x,y}(F) \cap G(O).$$
\end{lem}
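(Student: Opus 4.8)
The plan is to prove the two inclusions separately, using the smoothness of $S_{x,y}$ over $R$ that is part of the definition of a uniform spherical pair. The inclusion $S_{x,y}(O_F)\subset T_{x,y}(F)\cap G(O_F)$ is essentially formal: $S_{x,y}$ is by construction a closed subscheme of $G\times_{\Spec R}\Spec R$, so an $O_F$-point of $S_{x,y}$ is in particular an $O_F$-point of $G$, hence lands in $G(O_F)$; and since $S_{x,y}$ agrees with $T_{x,y}$ after inverting $\pi$, and $\pi$ is invertible in $F$, any such point is also an $F$-point of $T_{x,y}$. So the content is entirely in the reverse inclusion.

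For the reverse inclusion, take $g\in T_{x,y}(F)\cap G(O_F)$. Then $g$ is an $O_F$-point of $G$ whose generic fiber lies on $T_{x,y}$; I want to conclude that it extends to an $O_F$-point of the schematic closure $S_{x,y}$. The key input is that $\Spec O_F$ is the spectrum of a discrete valuation ring (in fact of a complete DVR, receiving a map from $R$), so it is reduced and in particular the closed point is a specialization of the generic point. First I would base-change everything along $R\to O_F$: write $S:=S_{x,y}\times_{\Spec R}\Spec O_F$, a closed subscheme of $G_{O_F}$, which is smooth (hence flat, hence $O_F$-torsion-free) over $O_F$ because smoothness is preserved by base change; and its generic fiber is $(T_{x,y})_F$ since base change commutes with the localization inverting $\pi$. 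Because $S$ is $O_F$-flat and closed in $G_{O_F}$, it is the schematic closure in $G_{O_F}$ of its own generic fiber $(T_{x,y})_F$. Now $g:\Spec O_F\to G_{O_F}$ is a morphism whose restriction to the generic point $\Spec F$ factors through $(T_{x,y})_F\subset S$; since $\Spec O_F$ is reduced (indeed integral) and $S$ is closed in $G_{O_F}$, the map $g$ factors through the schematic closure of $(T_{x,y})_F$, which is $S$. Hence $g\in S(O_F)=S_{x,y}(O_F)$.

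The one point that needs care — and which I expect to be the main (minor) obstacle — is the compatibility of schematic closure with base change: in general, forming the scheme-theoretic image does not commute with non-flat base change. Here this is circumvented precisely by condition \ref{cond:connectors} in Definition \ref{defn:good.pair}: $S_{x,y}$ is assumed to be \emph{smooth}, hence \emph{flat}, over $R$, so $S_{x,y}\times_R O_F$ is again flat over $O_F$ and is therefore automatically equal to the schematic closure of its generic fiber inside $G_{O_F}$ (a flat closed subscheme with the right generic fiber is the schematic closure). This flatness hypothesis is exactly what makes the valuative-criterion-type argument of the previous paragraph go through, and without it the statement could fail. Once this is in place, the argument above is complete; no further calculation is needed.
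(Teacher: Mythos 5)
Your proof is correct and follows essentially the same valuative-criterion route as the paper, but you have misidentified the crux: the smoothness (flatness) of $S_{x,y}$ is \emph{not} needed for this lemma, and your closing remark that ``without it the statement could fail'' is not right. What the argument actually requires is only the \emph{containment} of the schematic closure of $(T_{x,y})_F$ in $G_{O_F}$ inside $S_{x,y}\times_R O_F$, not equality. This containment is automatic: $S_{x,y}\times_R O_F$ is a closed subscheme of $G_{O_F}$ whose generic fiber is exactly $(T_{x,y})_F$ (because localization at $\pi$ is flat, so taking closure in $G$ over $R$ and then restricting back to the generic fiber recovers $T_{x,y}$), and the schematic closure is by definition the smallest closed subscheme with that generic fiber. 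Your step ``$g$ factors through the schematic closure of $(T_{x,y})_F$'' (which does use that $O_F$ is a domain, so that $\Spec O_F$ equals the closure of its generic point) already finishes the proof once you note this containment; establishing the stronger equality via $O_F$-flatness, while true, is a detour. In the paper the smoothness hypothesis is exploited elsewhere (via formal smoothness, to produce $O_F$-points from $\bF_q$-points in Lemma \ref{lem:rough.bijection} and in Lemma \ref{lem:equal.stabilizers}), not here. As an aside, the paper's one-line justification appears to state the inclusion in the reverse, unhelpful direction (``$S_{x,y}\times_R O_F$ lies in the closure of $T_{x,y}\times F$''); what is needed, and what is true, is that the closure lies in $S_{x,y}\times_R O_F$ --- so your write-up, once stripped of the flatness detour, is in fact cleaner than the original on this point.
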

\begin{proof}
The inclusion $S_{x,y}(O_F) \subset T_{x,y}(F) \cap G(O_F)$ is evident. In order to prove the other inclusion we have to show that any map $\psi: \Dima{\Spec(O_F)\to G \times _{\Spec R} \Spec O_F}$ such that $\Img (\psi|_{\Spec F}) \subset \Dima{T_{x,y} \times _{\Spec R[\pi^{-1}]} \Spec F}$ satisfies $\Img \psi \subset \Dima{S_{x,y} \times _{\Spec R} \Spec O_F}$.

This holds since $\Dima{S_{x,y} \times _{\Spec R} \Spec O_F}$ lies in the closure of $\Dima{T_{x,y} \times_{\Spec R[\pi^{-1}]} \Spec F}$ in $\Dima{G \times _{\Spec R} \Spec O_F}$.
\end{proof}

\begin{lem} \lbl{lem:rough.bijection} If $(G,H)$ is uniform spherical, then there is a subset $\De\subset \pi^\Upsilon \fX$ such that, for every $F\in\cF_{R,\pi}$, the map $x\mapsto K_0(F)x$ is a bijection between $\De$ and $K_0(F)\bs G(F)/H(F)$.
\end{lem}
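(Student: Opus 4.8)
The plan is to start from Lemma \ref{lem:SO} together with condition (\ref{cond:1}) of Definition \ref{defn:good.pair}, which tells us that $x \mapsto K_0(F)x$ restricted to $\pi^\Upsilon\fX$ is already onto $K_0(F)\bs G(F)/H(F)$ for every $F \in \cF_{R,\pi}$. So the only thing to do is to extract from $\pi^\Upsilon\fX$ a subset $\De$ on which the map becomes injective, and — this is the crucial point — to do so \emph{uniformly in $F$}, i.e. so that the \emph{same} $\De$ works for all $(R,\pi)$-local fields simultaneously. The natural approach is to define an equivalence relation on the index set $\pi^\Upsilon\fX$ by declaring $x \sim y$ iff $x$ and $y$ lie in the same double coset $K_0(F)\bs G(F)/H(F)$, and then to show this relation does not depend on $F$; picking one representative from each class gives $\De$.

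The key step is therefore to prove that for $x,y \in \pi^\Upsilon\fX$, the condition ``$K_0(F)x = K_0(F)y$ in $G(F)/H(F)$'' is independent of $F \in \cF_{R,\pi}$. Now $K_0(F)x = K_0(F)y$ means precisely that there exists $g \in K_0(F) = G(O_F)$ with $gx = y$, i.e. that $T_{x,y}(F) \cap G(O_F) \neq \emptyset$. By Lemma \ref{lem:SO} this set equals $S_{x,y}(O_F)$. So I need: $S_{x,y}(O_F) \neq \emptyset$ is independent of $F \in \cF_{R,\pi}$. Here I invoke condition (\ref{cond:connectors}): $S_{x,y}$ is a smooth scheme over $R$ (the closure of $T_{x,y}$). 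Smoothness over $R$ means that $S_{x,y}(O_F) \to S_{x,y}(O_F/\cP_F) = S_{x,y}(\kappa_F)$ is surjective (Hensel's lemma / smoothness lifts points over the residue field), and since every $F \in \cF_{R,\pi}$ comes with an epimorphism $R \to O_F$ inducing an isomorphism on an appropriate quotient, the residue field and the relevant special fiber $S_{x,y} \times_{\Spec R} \Spec(R/m)$ are the same for all such $F$. Concretely: $S_{x,y}(O_F) \neq \emptyset$ iff the special fiber $S_{x,y} \times_{\Spec R}\Spec(R/m)$ has a point over the residue field $R/m$, which depends only on $R$ and not on $F$. (One should also record that $x,y$ themselves, viewed in $(G/H)(R[\pi^{-1}])$, make sense over all $F$ by the setup, and that condition (\ref{cond:connectors}) is exactly what guarantees $S_{x,y}$ behaves well enough for this lifting argument.)

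Granting the independence claim, I finish as follows. Fix any one $F_0 \in \cF_{R,\pi}$ (such fields exist — e.g. one attached to $R$ directly), use it to define the equivalence relation $\sim$ on the finite-or-countable set $\pi^\Upsilon\fX$ as above, choose a set $\De \subset \pi^\Upsilon\fX$ of representatives, and observe that by the independence claim, for \emph{every} $F \in \cF_{R,\pi}$ two distinct elements of $\De$ give distinct double cosets in $K_0(F)\bs G(F)/H(F)$, so the map $\De \to K_0(F)\bs G(F)/H(F)$ is injective; it is onto because $\pi^\Upsilon\fX \to K_0(F)\bs G(F)/H(F)$ is onto by condition (\ref{cond:1}) and every element of $\pi^\Upsilon\fX$ is $\sim$-equivalent to, hence maps to the same double coset as, some element of $\De$. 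This gives the desired bijection.

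The main obstacle I anticipate is the geometric input in the middle paragraph: carefully justifying that ``$S_{x,y}(O_F)\neq\emptyset$'' reduces to a statement about the special fiber over $R/m$ and is thus $F$-independent. This requires being precise about what $R$ ``complete and smooth local'' buys us (so that $R \to O_F$ and the lifting of smooth points both work), about the fact that $R/m$ is a field that maps isomorphically to the residue field of every $O_F$ with $F\in\cF_{R,\pi}$, and about applying smoothness of $S_{x,y}/R$ to get the section-lifting property $S_{x,y}(O_F)\twoheadrightarrow S_{x,y}(R/m)$. Everything else — the equivalence-relation bookkeeping and the surjectivity — is routine once this is in hand.
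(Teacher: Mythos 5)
Your proof is correct and follows essentially the same route as the paper's: you reduce the lemma to showing that, for $x,y\in\pi^\Upsilon\fX$, the condition $K_0(F)x=K_0(F)y$ is independent of $F$, identify $\{g\in G(O_F):gx=y\}$ with $S_{x,y}(O_F)$ via Lemma \ref{lem:SO}, and use smoothness of $S_{x,y}$ over $R$ (hence formal smoothness over the complete DVR $O_F$) to reduce nonemptiness of $S_{x,y}(O_F)$ to nonemptiness of $S_{x,y}(R/m)$, which is $F$-independent because the surjection $R\twoheadrightarrow O_F$ identifies $R/m$ with every residue field $O_F/\cP_F$. The paper states the same argument more tersely, writing the residue field as $\bF_q$ and leaving the choice-of-representatives bookkeeping implicit.
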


\begin{proof} It is enough to show that for any $F,F'\in\cF_{R,\pi}$ and for any $x,y\in \pi^\Upsilon \fX$, the equality $K_0(F)x=K_0(F)y$ is equivalent to $K_0(F')x=K_0(F')y$.

The scheme $S_{x,y}\otimes O_F$ is smooth over $R$, and hence it is smooth over $O_F$. Therefore, it is formally smooth. This implies that the map $S_{x,y}(O_F) \to S_{x,y}(\bF_q)$ is onto and hence $\{g\in G(O_F)| gx=y\}\neq\emptyset$ if and only if $S_{x,y}(\bF_q)\neq\emptyset$.

Hence, the two equalities
$K_0(F)x=K_0(F)y$ and
$K_0(F')x=K_0(F')y$ are equivalent
to $S_{x,y}(\bF_q)\neq\emptyset$.
\end{proof}

From now untill the end of the section we fix $\De$ as in the lemma.

\begin{prop} \lbl{prop:cond.ball} If $(G,H)$ is uniform spherical, then for every $x \in\pi^\Upsilon \fX$ and every $\ell\in\bN$, there is $M\in\bN$ such that for every $F\in\cF_{R,\pi}$, the set $K_\ell(F)x$ contains a ball of radius $M$ around $x$.
\end{prop}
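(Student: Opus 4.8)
The plan is to reduce the statement to the smoothness of the morphism $G\to G/H$ together with the finiteness assumptions built into the definition of a uniform spherical pair. Fix $x\in\pi^\Upsilon\fX$ and $\ell\in\bN$. Since $x$ lies in $(G/H)(R[\pi^{-1}])$, its reduction modulo $\pi$ makes sense only after clearing denominators; but by condition \eqref{cond:connectors} (applied with $y=x$), the scheme $S_{x,x}=\overline{\Stab_G(x)}$ is smooth over $R$, and more importantly the orbit map $o_x\colon G\to G/H$, $g\mapsto gx$, is a smooth morphism near the identity because $G\to G/H$ is smooth (as noted right after Definition \ref{defn:good.pair}, using smoothness of $H$). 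The key geometric input is thus: \emph{the orbit map $K_0(F)\to K_0(F)x$ is ``uniformly open'' in the valuative metric, with a modulus of continuity independent of $F$.}

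First I would make this precise. Working over $R$, consider the morphism $\mu\colon G\times_{\Spec R}\Spec R[\pi^{-1}]\to (G/H)\times\Spec R[\pi^{-1}]$ given by $g\mapsto gx$; after spreading out and using condition \eqref{cond:connectors}, $\mu$ extends to a smooth morphism on a neighborhood of the identity section, hence (shrinking if necessary) to a smooth morphism $\widetilde\mu\colon\cU\to G/H$ with $\cU\subset G$ an open subscheme containing the identity section over $\Spec R$. Smoothness of $\widetilde\mu$ gives, étale-locally, a section; concretely, by the formal smoothness / Hensel's lemma argument already used in the proof of Lemma \ref{lem:rough.bijection}, there is an integer $c$ (depending only on the scheme-theoretic data over $R$, not on $F$) such that for every $F\in\cF_{R,\pi}$ and every point $z\in(G/H)(O_F)$ with $\val_F(z,x)\ge c$, there exists $g\in G(O_F)$ with $gx=z$ and $\val_F(g,e)\ge 1$. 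Iterating (or rescaling the congruence level), for every $\ell$ one gets $M_0=M_0(\ell)$, independent of $F$, such that any $z$ with $\val_F(z,x)\ge M_0$ is of the form $gx$ with $g\in K_\ell(F)$. The only subtlety here is bookkeeping the loss of precision in Hensel's lemma in terms of the congruence level $\ell$ and the ``thickness'' of the non-smooth locus of $\mu$; this loss is bounded by a constant coming from the finitely many equations cutting out $G/H\hookrightarrow\bA^n$ and $S_{x,x}$, hence uniform in $F$.

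This gives one inclusion: $B(x,M_0)(F)\cap (G/H)(O_F)\subset K_\ell(F)x$. To finish I must also know that a whole ball $B(x,M)(F)$ in $\bA^n(F)$ — not just its intersection with $(G/H)(O_F)$ — lies in $K_\ell(F)x$, i.e. that near $x$ every $O_F$-point of $\bA^n$ sufficiently close to $x$ already lies on $G/H$. This again follows from smoothness of $G/H$ over $R$ at the point $x$ (so $G/H$ is, near $x$, cut out in $\bA^n$ by equations whose Jacobian has a unit maximal minor over $O_F$), via Hensel: there is $M_1$, uniform in $F$ by the same spreading-out argument, with $B(x,M_1)(F)\subset (G/H)(O_F)$. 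Taking $M:=\max(M_0(\ell),M_1)$ we get $B(x,M)(F)\subset K_\ell(F)x$ for all $F\in\cF_{R,\pi}$, as desired.

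The main obstacle I anticipate is not conceptual but quantitative: extracting an $F$-independent modulus $M$. The point is that all the relevant schemes ($G$, $G/H$, $S_{x,x}$, the affine embedding) are defined over the fixed ring $R$, so their defining equations, and in particular the ideals measuring failure of smoothness, are fixed once and for all; base-changing to $O_F$ and applying Hensel's lemma produces a precision loss governed by these fixed data. Making this rigorous will require choosing, over $R$, explicit local coordinates and étale charts for the smooth morphism $G\to G/H$ near the relevant points, and then arguing that ``$\ell$ congruence steps on the source suffice to reach any target point within $M(\ell)$ steps'' with $M(\ell)$ read off from these charts. Everything else — the two Hensel applications, and combining them — is routine.
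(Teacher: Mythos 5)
Your general strategy — smoothness of the orbit map $G\to G/H$ plus a Hensel/implicit-function argument whose constants are $F$-independent because everything is defined over $R$ — is indeed what the paper does, but there is an essential gap: you never reduce to $x\in\fX$. For a general $x\in\pi^\Upsilon\fX$, the point $x$ lies only in $(G/H)(R[\pi^{-1}])$ and typically has coordinates of negative valuation; consequently the orbit map $g\mapsto gx$ is \emph{not} a morphism of $O_F$-schemes carrying the $O_F$-point $e$ to an $O_F$-point of the target, so formal smoothness and Hensel's lemma do not apply at $x$ directly. Your appeal to condition~(\ref{cond:connectors}) does not fill this hole: smoothness of $S_{x,x}$ is a statement about the closure of the stabilizer, not about extending the orbit map to an $R$-morphism near the identity section. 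The paper first conjugates: for any $\de\in X_*(T)$ and any $\ell$ there is $n$, uniform in $F$, with $K_n(F)\subset\pi^\de K_\ell(F)\pi^{-\de}$, so the claim for $x=\pi^\de x_0$ with $x_0\in\fX$ reduces to the claim for $x_0$ at level $n$, using that multiplication by $\pi^{\pm\de}$ distorts balls by an $F$-independent amount (this is where condition~(3) of Definition~\ref{defn:good.pair} enters). Once $x_0\in\fX$, the orbit map $g\mapsto gx_0$ is a smooth morphism of $O_F$-schemes sending $e$ to the $O_F$-point $x_0$, and Lemma~\ref{lem:ball.crit} then gives the exact equality $\psi(B(e,\ell)(F))=B(x_0,\ell)(F)$, so $M=\ell$ works with no precision loss at all — contrary to your expectation that ``bookkeeping the loss of precision'' would be the main labor.

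Separately, your proposed ``second Hensel application,'' producing $M_1$ with $B(x,M_1)(F)\subset(G/H)(O_F)$, is both unnecessary and false. Unnecessary, because by the paper's definition preceding Definition~\ref{defn:good.pair} the ball $B(x,M)(F)$ is a subset of $(G/H)(F)$, not of $\bA^n(F)$, so there is nothing to check. False, because whenever $G/H$ has positive codimension in $\bA^n$, an ambient ball around $x$ contains points that fail the defining equations of $G/H$, so no such $M_1$ can exist.
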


\begin{proof}
Since, for every $\de\in X_*(T)$ and every $\ell\in\bN$, there is $n\in\bN$ such that $K_n(F) \subset \pi^\de K_\ell(F)\pi^{-\de}$ for every $F$, we can assume that $x \in \fX$. The claim now follows from the following version of the implicit function theorem.
\begin{lem} \lbl{lem:ball.crit}
Let $F$ be a local field. Let $X$ and $Y$ be affine schemes defined over $O_F$. Let $\psi:X \to Y$ be a smooth morphism defined
over $O_F$. Let $x \in X(O_F)$ and $y:=\psi(x)$. Then
$\psi(B(x,\ell)(F)) = B(y,\ell)(F)$ for any natural number $l$.
\end{lem}

\begin{proof}
The inclusion $\psi(B(x,\ell)(F)) \subset B(y,\ell)(F)$ is clear. We prove the inclusion
$\psi(B(x,\ell)(F)) \supset B(y,\ell)(F).$\\
Case 1: $X$ and $Y$ are affine spaces and $\psi$ is etale. The proof is standard.\\
Case 2: $X=\bA^m$, $\psi$ is etale: We can assume that $Y\subset \bA^{m+n}$ is defined by $f_1,\ldots,f_n$ with independent differentials, and that $\psi$ is the projection. The proof in this case follows from Case 1 by considering the map $F:\bA^{m+n}\to\bA^{m+n}$ given by $F(x_1,\ldots,x_{m+n})=(x_1,\ldots,x_m,f_1,\ldots,f_n)$.\\
Case 3: $\psi$ is etale: Follows from Case 2 by restriction from the ambient affine spaces.\\
Case 4: In general, a smooth morphism is a composition of an etale morphism and a projection, for which the claim is trivial.
\end{proof}
\end{proof}

\begin{lem} \lbl{lem:cond.fin} For every $\la \in X_*(T)$ and $x \in \pi^{\Upsilon}\fX$, there is a finite subset $B \subset\pi^{\Upsilon}\fX$ such that $\pi^\la K_0(F)x\subset\bigcup_{y\in B} K_0(F)y$ for all $F\in\cF_{R,\pi}$.
\end{lem}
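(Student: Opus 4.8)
The plan is to exhibit $\pi^\la K_0(F)x$ inside a ``ball'' in $(G/H)(F)$ of radius independent of $F$, and then to argue that such a ball can meet only finitely many double cosets, the collection of which is moreover the same for every $F\in\cF_{R,\pi}$.

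The basic estimate is the following. Since $G$, $H$ and the embedding $G/H\hookrightarrow\bA^n$ are defined over $R$ and $\pi$ maps to a uniformizer, the action $a\colon G\times (G/H)\to G/H$ followed by the embedding is given, in the coordinates of $\bA^n$, by polynomials of some fixed degree $d$ with coefficients in $\cO(G)$; specializing the $G$-variable to $k\in K_0(F)=G(O_F)$ puts these coefficients in $O_F$. Hence, for all $k\in K_0(F)$ and $w\in(G/H)(F)$, one has $\val_F(k\cdot w)\ge\min(0,d\cdot\val_F(w))$, and the same bound for $\val_F(k^{-1}\cdot w)$; likewise $\val_F(\pi^\la\cdot w)\ge -c(\la)+\min(0,d\cdot\val_F(w))$ for a constant $c(\la)$ depending only on $\la$, accounting for the fact that the matrix entries of $\pi^\la$ have valuations bounded below by a constant depending only on $\la$. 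Feeding in Definition \ref{defn:good.pair}(3), which says $\val_F(x)$ is independent of $F$, I would get $\pi^\la K_0(F)x\subseteq\Omega:=\{w\in (G/H)(F)\mid\val_F(w)\ge -N\}$ with $N$ independent of $F$.

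Now I would invoke Lemma \ref{lem:rough.bijection}, which provides one subset $\De\subseteq\pi^\Upsilon\fX$, the same for all $F$, with $y\mapsto K_0(F)y$ a bijection from $\De$ onto $K_0(F)\bs G(F)/H(F)$. If $y\in\De$ and $K_0(F)y$ meets $\Omega$, say $k\cdot y\in\Omega$ with $k\in K_0(F)$, then applying $k^{-1}$ — a polynomial map of degree $\le d$ with coefficients in $O_F$ — gives $\val_F(y)\ge -N'$ for some $N'$ depending only on $N$ and $d$. By Definition \ref{defn:good.pair}(3) this is a condition on $y$ alone, independent of $F$, so $B:=\{y\in\De\mid\val_F(y)\ge -N'\}$ is a single subset of $\pi^\Upsilon\fX$ independent of $F$; it is finite because its elements lie in pairwise disjoint open subsets of $(G/H)(F)$ — the orbits $K_0(F)y$ are open, $K_0(F)$ being open in $G(F)$ and $G\to G/H$ being smooth, hence open on $F$-points — all contained in the compact set $\{w\in (G/H)(F)\mid\val_F(w)\ge -N'\}$. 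Finally, since $\De$ parametrizes all $K_0(F)$-orbits, every point of $\pi^\la K_0(F)x\subseteq\Omega$ lies in some $K_0(F)y$ with $y\in\De$, that orbit meets $\Omega$, and hence $y\in B$; thus $\pi^\la K_0(F)x\subseteq\bigcup_{y\in B}K_0(F)y$.

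The main obstacle is the second paragraph: one has to verify that, because the whole situation is defined over $R$, the polynomial degrees and the valuations of the coefficients appearing are bounded uniformly over all $F\in\cF_{R,\pi}$ — this is what lets Definition \ref{defn:good.pair}(3) be used on both ends, first to propagate the uniform bound on $\val_F(x)$ to the whole set $\pi^\la K_0(F)x$, and then to recognize that the finite family of double cosets met by that set does not depend on $F$. The rest — compactness of balls in $(G/H)(F)$, openness of the $K_0(F)$-orbits, and the existence of the $F$-independent parameter set $\De$ — is either routine or already in hand from Lemma \ref{lem:rough.bijection}.
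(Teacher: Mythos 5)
Your proof follows the paper's approach: bound $\val_F$ on $\pi^\la K_0(F)x$ from below by a constant independent of $F$, use the parameter set $\De$ from Lemma \ref{lem:rough.bijection}, and invoke condition (3) of Definition \ref{defn:good.pair} to see that the resulting set of double-coset representatives is $F$-independent. The valuation estimates in your first two paragraphs are sound. But there is a real gap in the final finiteness step.

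You claim $B$ is finite ``because its elements lie in pairwise disjoint open subsets of $(G/H)(F)$ \dots all contained in the compact set.'' This is not a valid finiteness principle: a compact totally disconnected space can contain infinitely many pairwise disjoint nonempty open (even clopen) subsets --- for instance the sets $\{x\in\bZ_p : \val_p(x)=n\}$, $n\ge 0$, inside $\bZ_p$. Openness alone is not enough; you need a \emph{uniform} lower bound on the size of those open sets. That is exactly what the paper's argument supplies: it appeals not just to the statement of Proposition \ref{prop:cond.ball} but to its proof, from which one extracts that each orbit $K_0(F)gx_0$ with $g\in K_0(F)\pi^\la K_0(F)\pi^\de$ contains a ball of valuative radius depending only on $\la$ and $\de$, not on the individual orbit. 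Only finitely many pairwise disjoint balls of a \emph{fixed} radius fit inside a compact set, and this is what yields finiteness. Without that uniformity your argument does not close. (A minor secondary point: the orbits $K_0(F)y$, $y\in B$, lie in $\{\val_F\ge -dN'\}$ rather than $\{\val_F\ge -N'\}$, since acting by $K_0(F)$ can degrade valuations by the degree factor $d$; this is easily repaired by enlarging the compact set, but it should be said.)
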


\begin{proof}
By Lemma \ref{lem:rough.bijection}, we can assume that the sets $K_0(F)\pi^\la x_0$ for $\la\in\Upsilon$ are disjoint. There is a constant $C$ such that for every $F$ and for every $g\in\pi^\la K_0(F)\pi^\de$, $val_F(gx_0)\geq C$. Fix $F$ and assume that $g\in K_0(F)\pi^\la K_0(F)\pi^\de$. From the proof of Proposition \ref{prop:cond.ball}, it follows that $K_0(F)gx_0$ contains a ball whose radius depends only on $\la,\de$. Since $F$ is locally compact, there are only finitely many disjoint such balls in the set $\{x\in G(F)/H(F) \,|\, val_F(x)\geq C\}$, so there are only finitely many $\eta\in\Upsilon$ such that $val_F(\pi^\la x_0) \geq C$. By definition, this finite set, $S$, does not depend on the field $F$. Therefore, $\pi^\la K_0(F)\pi^\de x_0\subset\bigcup_{\eta\in S} K_0(F)\pi^\eta x_0$.
\end{proof}
\RGRCor{
\begin{notation}
$ $
\begin{itemize}
\item Denote by $\cM_\ell(G(F)/H(F))$ the space of $K_\ell(F)$-invariant
compactly supported measures on $G(F)/H(F)$. 
\item  For a $K_l$ invariant subset $U \subset G(F)/H(F)$ we denote by $1_U \in \cM_\ell(G(F)/H(F))$  the Haar measure on $G(F)/H(F)$
 multiplied by the characteristic function of $U$ and normalized s.t. its integral is $1.$ We define in a similar way $1_V \in \cH_{\ell}(G,F)$ for a $K_l$-double  invariant subset $V \subset G(F)$.
\end{itemize}

\end{notation}
}

\RGRCor{
\begin{prop} \lbl{prop:FG}
If $(G,H)$ is uniform spherical then $\cM_\ell(G(F)/H(F))$ is finitely generated over $\cH_{\ell}(G,F)$ for any $\ell$.
 \end{prop}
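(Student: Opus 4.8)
The plan is to reduce the statement, via the relative Cartan decomposition, to a finite computation inside each $K_0(F)$-orbit on $G(F)/H(F)$. Fix $\De\subset\pi^\Upsilon\fX$ as in Lemma \ref{lem:rough.bijection}, so that $G(F)/H(F)=\bigsqcup_{z\in\De}K_0(F)z$. Each $K_0(F)z$ is compact (the image of the compact group $K_0(F)$) and open (the orbit map $G(F)\to G(F)/H(F)$ is open), so $\cM_\ell(G(F)/H(F))=\bigoplus_{z\in\De}\cM_\ell(K_0(F)z)$, where $\cM_\ell(K_0(F)z)$ denotes the $K_\ell(F)$-invariant measures supported on $K_0(F)z$; since $K_\ell(F)$ is open, each $K_0(F)z$ meets only finitely many $K_\ell(F)$-orbits, so $\cM_\ell(K_0(F)z)$ is finite dimensional, with basis $\{1_{K_\ell(F)w}\}$ over the $K_\ell(F)$-orbits $K_\ell(F)w\subset K_0(F)z$. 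Finally, since $\cH(K_0(F),K_\ell(F))\subset\cH_\ell(G,F)$, the above is a decomposition of $\cH(K_0(F),K_\ell(F))$-modules.

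The key local fact is that each $\cM_\ell(K_0(F)z)$ is a cyclic module over $\cH(K_0(F),K_\ell(F))$, generated by the single vector $1_{K_\ell(F)z}$. Indeed, because $K_\ell(F)$ is normal in $K_0(F)$, for $k\in K_0(F)$ we have $k\cdot 1_{K_\ell(F)z}=1_{kK_\ell(F)z}=1_{K_\ell(F)kz}$, and as $k$ ranges over $K_0(F)$ these are precisely the basis vectors of $\cM_\ell(K_0(F)z)$; hence the $K_0(F)$-subrepresentation of $\cM(K_0(F)z)$ generated by $1_{K_\ell(F)z}$ is all of $\cM_\ell(K_0(F)z)$. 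Applying the idempotent $e_{K_\ell(F)}$, which fixes $1_{K_\ell(F)z}$, converts this into generation as an $\cH(K_0(F),K_\ell(F))$-module.

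It then remains to reach every $1_{K_\ell(F)z}$, $z\in\De$, from one fixed finite set, and this is where I would use condition \ref{cond:GR} of Definition \ref{defn:good.pair}. Assume first $\ell>l_0$. Writing $z=\pi^\alpha x$ with $x\in\fX$ and $\alpha\in\Upsilon$ (possible since $\De\subset\pi^\Upsilon\fX$), condition \ref{cond:GR} gives $K_\ell(F)\pi^\alpha K_\ell(F)x=K_\ell(F)\pi^\alpha x=K_\ell(F)z$. Hence $1_{K_\ell(F)\pi^\alpha K_\ell(F)}*1_{K_\ell(F)x}$ is a nonnegative measure supported on the single $K_\ell(F)$-orbit $K_\ell(F)z$, of positive total mass, so it is a positive multiple of $1_{K_\ell(F)z}$. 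Thus $1_{K_\ell(F)z}\in\cH_\ell(G,F)\cdot 1_{K_\ell(F)x}$, and combining this with the previous two paragraphs we conclude that $\cM_\ell(G(F)/H(F))$ is generated over $\cH_\ell(G,F)$ by the finite set $\{\,1_{K_\ell(F)x}\ :\ x\in\fX\,\}$.

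The step I expect to be the main obstacle is extending this to an arbitrary $\ell$, since condition \ref{cond:GR} only gives the single-orbit statement for $\ell>l_0$. One approach is to reduce to the case $\ell>l_0$: for $\ell'>\max(\ell,l_0)$ one has $\cM_\ell(K_0(F)z)=e_{K_\ell(F)}*\cM_{\ell'}(K_0(F)z)$, and one would try to transport the explicit finite generating set found for $\ell'$ down along $e_{K_\ell(F)}$ — here care is needed because a $K_{\ell'}(F)$-level generator need not be $K_\ell(F)$-invariant. Alternatively one reruns the last paragraph directly for $\ell\le l_0$: the measure $1_{K_\ell(F)\pi^\alpha K_\ell(F)}*1_{K_\ell(F)x}$ still has $z$ in its support, and by Lemma \ref{lem:cond.fin} it is supported on a field-independent finite union of $K_0(F)$-orbits from $\pi^\Upsilon\fX$; projecting to $\cM_\ell(K_0(F)z)$ and using the cyclicity above, one gets a nonzero element of $\cM_\ell(K_0(F)z)$ inside $\cH_\ell(G,F)\cdot\bigoplus_{x\in\fX}\cM_\ell(K_0(F)x)$, and the delicate point becomes showing — by exploiting sufficiently many Hecke translates together with positivity — that this forces the whole summand $\cM_\ell(K_0(F)z)$ to lie in that submodule.
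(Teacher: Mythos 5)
Your argument for $\ell>l_0$ is correct and is essentially identical to the paper's: the decomposition $\cM_\ell(G(F)/H(F))=\bigoplus_{z\in\De}\cM_\ell(K_0(F)z)$ together with the cyclicity of each summand over $\cH(K_0(F),K_\ell(F))$ is exactly the content of the paper's observation that $\{g\,1_{K_\ell\pi^\alpha x}\,:\,g\in K_0/K_\ell\}$ spans, and the use of condition~\ref{cond:GR} to produce $1_{K_\ell\pi^\alpha x}$ from $1_{K_\ell x}$ is the same. You are also right to single out the reduction to large $\ell$ as the remaining obstacle, and it is worth knowing that the paper itself dispatches it in one line, ``As in step 4 of Lemma~\ref{VKFinGen}, it is enough to prove the assertion for large enough $\ell$.''

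However, that one line is not directly applicable: step~4 of Lemma~\ref{VKFinGen} is about generation over the \emph{center} $Z(\cH_{K}(G(F)))$, and the computation $e_{K}h_iv_i=e_{K}h_ie_{K}v_i$ there uses $e_{K}h_i=h_ie_{K}$, which holds precisely because $h_i$ is central in $\cH_{K'}(G(F))$. In Proposition~\ref{prop:FG} the coefficients live in the full Hecke algebra $\cH_{\ell'}$, so they need not commute with $e_{K_\ell}$, and the manipulation breaks down --- which is the same difficulty you noticed about ``transporting'' $K_{\ell'}$-level generators down via $e_{K_\ell}$. A clean way to close the gap is to invoke Noetherianity of $\cM(G(F))$ (a theorem of Bernstein): once the large-$\ell$ case shows $\cM_{\ell'}$ is finitely generated over $\cH_{\ell'}$ for some $\ell'>l_0$, the ambient $G(F)$-module $\cM(G(F)/H(F))=\bC[G(F)]\cdot\cM_{\ell'}$ is a finitely generated $G(F)$-module; hence for any compact open $K_\ell$, the $G(F)$-submodule generated by $\cM_\ell$ is finitely generated, say by $w_1,\ldots,w_m\in\cM_\ell$, and then $\cM_\ell=e_{K_\ell}\sum_i\bC[G(F)]w_i=\sum_i\cH_\ell w_i$. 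So your structure of argument is the paper's, your flagged worry in the last paragraph is legitimate, and a complete write-up should replace the paper's ``as in step~4'' with an explicit Noetherianity argument such as the one above rather than try to mimic the central-element computation.
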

\begin{proof}
As in step 4 of Lemma \ref{VKFinGen},  it is enough to prove the assertion for large enough $l$. Thus we may assume that for every $x \in \fX$ and $\alpha \in
\Upsilon$ we have $K_l \pi^\alpha K_l x= K_l \pi^\alpha  x $. Therefore, 
$1_{K_l \pi^\alpha K_l}1_{K_l x}=1_{K_l \pi^\alpha  x}$. Hence for any $g\in K_0/K_l$ we have 
$(g 1_{K_l \pi^\alpha K_l})1_{K_l x}=1_{gK_l \pi^\alpha  x}$. \DimaGRCor{Now, the elements $1_{gK_l \pi^\alpha  x}$ span $\cM_\ell(G(F)/H(F))$ by condition \ref{cond:1} in definition \ref{defn:good.pair}. } This implies that the elements $1_{K_l x}$ generate $\cM_\ell(G(F)/H(F))$.
\end{proof}
}

\subsection{Close Local Fields}\lbl{subsec:CloseLocalFields}

\begin{defn} Two $(R,\pi)$-local fields $F,E\in\cF_{R,\pi}$, are $n$-close if there is an isomorphism $\phi_{E,F}:O_F/\pi^n\to O_E/\pi^n$ such that the two maps $R\to O_F\to O_F/\pi^n\to O_E/\pi^n$ and $R\to O_E\to O_E/\pi^n$ coincide. In this case, $\phi$ is unique.
\end{defn}

\begin{thm}[\cite{Kaz}]
Let $F$ be an $(R,\pi)$ local field. Then, for any $\ell$, there exists $n$ such that, for any $E\in\cF_{R,\pi}$, which is $n$-close to $F$, there exists a unique isomorphism $\Phi_{\cH,\ell}$ between the algebras $\cH_{\ell}(G,F)$ and $\cH_{\ell}(G,E)$ that maps the Haar measure on $K_{\ell}(F)\pi^\la K_{\ell}(F)$ to the  Haar measure on $K_{\ell}(E)\pi^\la K_{\ell}(E)$, for every $\la\in X_*(T)$, and intertwines the actions of the finite group $K_0(F)/K_{\ell}(F)\stackrel{\phi_{F,E}}{\cong}K_0(E)/K_{\ell}(E)$.
\end{thm}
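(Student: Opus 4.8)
The plan is to follow Kazhdan's original strategy essentially verbatim, since the torus $T$, the coweight lattice $X_*(T)$, and the Cartan decomposition $G(F) = \bigsqcup_{\la \in X_*(T)^+} K_0(F)\pi^\la K_0(F)$ depend on $F$ only through the combinatorial data attached to the split group $G$, and not through the analytic structure of $F$. First I would fix a $\bC$-linear isomorphism between $\cH_\ell(G,F)$ and $\cH_\ell(G,E)$ by decreeing that the characteristic measure $1_{K_\ell(F)\pi^\la K_\ell(F)}$ goes to $1_{K_\ell(E)\pi^\la K_\ell(E)}$ and, more generally, using the Cartan decomposition refined by the finite group $K_0/K_\ell$: a basis of $\cH_\ell(G,F)$ is given by the measures $1_{g K_\ell(F)\pi^\la K_\ell(F) g'}$ for $\la$ ranging over dominant coweights and $g,g'$ over coset representatives of $K_0(F)/K_\ell(F)$, and the isomorphism $\phi_{F,E}: K_0(F)/K_\ell(F) \xrightarrow{\sim} K_0(E)/K_\ell(E)$ (which exists because $K_0/K_\ell \cong G(O_F/\cP_F^\ell)$ depends only on $O_F/\cP_F^\ell$, hence only on the $\ell$-close data) lets one transport this basis. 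This defines $\Phi_{\cH,\ell}$ as a linear map; uniqueness with the stated properties is immediate once one knows it is an algebra isomorphism, since the conditions pin it down on a spanning set.

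The substantive point is that $\Phi_{\cH,\ell}$ is multiplicative once $n$ is taken large enough (depending on $\ell$ and on $F$). Here I would argue as follows. The product $1_{A} * 1_{B}$ of two basis elements is again a finite $\bZ_{\geq 0}$-combination (after normalization, a $\bQ$-combination) of basis elements, and each structure constant is computed by counting, inside $G(O_F/\cP_F^N)$ for $N$ sufficiently large relative to $\ell$ and to the coweights involved, the number of solutions to an equation of the form $xy = z$ with $x,y,z$ in prescribed double cosets. The key observation — exactly as in \cite{Kaz} — is that these counts take place entirely in the finite group $G(O_F/\cP_F^N)$, together with the position of $\pi^\la$ modulo $\cP_F^N$, all of which is determined by $O_F/\cP_F^N$; so if $F$ and $E$ are $N$-close these counts agree. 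Since the algebra $\cH_\ell(G(F))$ is finitely generated, only finitely many structure constants, involving only finitely many coweights $\la_1,\dots,\la_r$, are needed to check multiplicativity on generators; for each such finite collection there is a bound $N_0$ so that the relevant counting happens modulo $\cP_F^{N_0}$. Taking $n := N_0$ (or the maximum over the finitely many checks needed) gives the theorem.

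The main obstacle I anticipate is bookkeeping rather than conceptual: one must make precise the claim that the convolution $1_{K_\ell\pi^\la K_\ell} * 1_{K_\ell\pi^\mu K_\ell}$, and its decomposition into $K_\ell$-double cosets, is "computed mod $\cP_F^N$" — that is, that there is an explicit $N = N(\ell,\la,\mu)$ such that the support of the convolution is contained in $\bigsqcup_{\nu \in S} K_\ell\pi^\nu K_\ell$ for a finite set $S = S(\ell,\la,\mu)$ independent of $F$, and that the multiplicity with which $1_{K_\ell\pi^\nu K_\ell}$ appears is a function only of the image of the data in $G(O_F/\cP_F^N)$. This requires: (i) a uniform bound on how far, in the Cartan stratification, a product $K_\ell\pi^\la K_\ell \cdot K_\ell\pi^\mu K_\ell$ can reach — this follows from the theory of the Cartan decomposition for split groups and is independent of $F$; and (ii) the observation that membership of an element $g \in G(F)$ in $K_\ell(F)\pi^\nu K_\ell(F)$ can be detected by reducing $g$ modulo a suitable power of $\cP_F$. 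Granting these, the finite generation of $\cH_\ell(G(F))$ (which holds because $G$ splits over $O_F$ for $F \in \cF_{R,\pi}$, by the discussion in \S\ref{subsec:prel}) closes the argument; and the same reduction-mod-$\cP_F^n$ principle yields that $\Phi_{\cH,\ell}$ is compatible with the finite presentation as well, although for the present statement only multiplicativity is required. This is exactly the content of \cite{Kaz}, and I would simply cite it after setting up the notation in the $(R,\pi)$-framework; the only thing to verify is that the $(R,\pi)$-structure is rich enough to carry Kazhdan's argument, which it is, since $R \to O_F$ being surjective with $\pi$ a uniformizer means $O_F/\cP_F^n$ is a quotient of $R/\pi^n$, so two $n$-close fields in $\cF_{R,\pi}$ have canonically identified $O_F/\cP_F^n$'s compatibly with the group scheme structure on $G$.
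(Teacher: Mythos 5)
The paper does not prove this theorem; it cites it from \cite{Kaz} and, in the introduction to Section~4, summarizes Kazhdan's strategy exactly as you reconstruct it: first a linear isomorphism via Cartan decomposition, then agreement of products of any two fixed elements for sufficiently close fields, and finally an appeal to finite generation \emph{and finite presentation} of the Hecke algebras. Your setup (the $K_0/K_\ell$-refined Cartan basis, the $(R,\pi)$-compatibility of $\phi_{F,E}$, and the uniform-in-$F$ reach of $K_\ell\pi^\la K_\ell\cdot K_\ell\pi^\mu K_\ell$) is sound and matches the framework the paper uses in its own close-fields arguments (compare Proposition~\ref{prop:local.isom} and the proof of Theorem~\ref{thm:phiModIso}).

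There is one genuine gap in the closing step. You write that ``only finitely many structure constants \dots\ are needed to check multiplicativity on generators,'' and later that ``for the present statement only multiplicativity is required,'' treating finite presentation as optional. That is not correct: a linear bijection $\Phi$ that is multiplicative on pairs of algebra generators need not be an algebra homomorphism, because $\Phi\bigl((a_i a_j) a_k\bigr)$ involves $\Phi$ on basis elements appearing in the expansion of $a_i a_j$, and these are not generators. The uniform $n$ cannot come from the structure-constant estimate alone, since the threshold $N(\ell,\la,\mu)$ you correctly identify grows with $\la,\mu$. Kazhdan's argument (and the paper's analogue for modules in the proof of Theorem~\ref{thm:phiModIso}) genuinely needs finite \emph{presentation}: one fixes a finite set of generators and a finite set of relations, uses the per-pair estimate to show that, for $n$ large, the images of the generators under the linearly-defined $\Phi_{\cH,\ell}$ satisfy the same relations in $\cH_\ell(G,E)$, obtains from this an honest algebra homomorphism $\Phi'$ agreeing with $\Phi_{\cH,\ell}$ on generators, and then separately identifies $\Phi'$ with $\Phi_{\cH,\ell}$ (e.g.\ via the uniqueness clause and the intertwining of $K_0/K_\ell$-actions). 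Repeating in the other direction and composing yields the isomorphism. So your sketch has the right ingredients, but the role of finite presentation is essential and should not be demoted to an afterthought.
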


In this section we prove the following refinement of Theorem \ref{thm:ModIso} from the Introduction:

\begin{thm} \lbl{thm:phiModIso} Let $(G,H)$ be a uniform spherical pair.
\Removed{
Suppose that the module $\cM_\ell(G(F)/H(F))$ is finitely generated over the Hecke algebra $\cH(G(F),K_\ell(F))$ for any $F\in\cF_{R,\pi}$ and $\ell \in \bN$. }
Then, for any $\ell \in \bN$ and $F\in\cF_{R,\pi}$, there exists $n$ such that, for any $E\in\cF_{R,\pi}$ that is $n$-close to $F$, there exists a unique map $$\cM_\ell(G(F)/H(F)) \to \cM_\ell(G(E)/H(E))$$ which is an isomorphism of modules over the Hecke algebra $$\cH(G(F),K_\ell(F))\stackrel{\Phi_{\cH,\ell}}{\cong} \cH(G(E),K_\ell(E))$$ that maps the Haar measure on $K_\ell(F)x$ to the Haar measure on $K_\ell(E)x$, for every $x\in\De\subset \pi^\la\Upsilon$, and intertwines the actions of the finite group $K_0(F)/K_\ell(F)\stackrel{\phi_{F,E}}{\cong}K_0(E)/K_\ell(E)$.
\end{thm}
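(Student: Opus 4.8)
The plan is to mimic Kazhdan's strategy for the Hecke-algebra isomorphism, replacing the Cartan decomposition by the ``relative Cartan decomposition'' provided by the set $\De$ from Lemma \ref{lem:rough.bijection}, and using the finite generation statement Proposition \ref{prop:FG} in place of finite generation of the Hecke algebra. First I would use Lemma \ref{lem:rough.bijection} to fix, for every $F\in\cF_{R,\pi}$, the $\bC$-basis $\{1_{K_\ell(F)x}\}$ of $\cM_\ell(G(F)/H(F))$ indexed by $x$ running over $\De$ together with a set of coset representatives of $K_0(F)/K_\ell(F)$ acting on the left (more precisely, the $K_\ell(F)$-orbits inside each $K_0(F)x$, whose number is independent of $F$ by Proposition \ref{prop:cond.ball} combined with local compactness). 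This gives a canonical $\bC$-linear identification $\Psi_\ell:\cM_\ell(G(F)/H(F))\to\cM_\ell(G(E)/H(E))$ once $F$ and $E$ are at least $\ell$-close, sending $1_{gK_\ell(F)x}$ to $1_{gK_\ell(E)x}$; this is the candidate map, and its uniqueness is forced by the requirement that it be $\cH$-linear and send $1_{K_\ell(F)x}\mapsto 1_{K_\ell(E)x}$ for $x\in\De$, since those elements generate over $\cH_\ell$ by Proposition \ref{prop:FG}.

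Next I would show that $\Psi_\ell$ is compatible with the module structures once the fields are sufficiently close. The module structure is determined by the action of the generators $1_{K_\ell(F)\pi^\la K_\ell(F)}$ of $\cH_\ell(G,F)$ on the basis elements $1_{gK_\ell(F)x}$, i.e. by structure constants expressing $1_{K_\ell(F)\pi^\la K_\ell(F)}\ast 1_{gK_\ell(F)x}$ as a finite $\bZ_{\geq 0}$-combination of basis elements $1_{hK_\ell(F)y}$. By Lemma \ref{lem:cond.fin} the set of $y\in\De$ occurring is finite and independent of $F$; so it suffices to treat finitely many $(\la,x)$ at a time. The key geometric input is that the individual structure constant counting how $\pi^\la K_\ell(F)$ carries a given small ball around $gx$ into a given small ball around $hy$ is computed on the reduction mod $\pi^n$ for $n$ large enough: this is exactly the content of Lemma \ref{lem:ball.crit} / Proposition \ref{prop:cond.ball} together with Lemma \ref{lem:SO} and the smoothness of $S_{x,y}$ over $R$ (condition \ref{cond:connectors} of Definition \ref{defn:good.pair}), which guarantees that ``does $gx$ land in $hy$'s $K_\ell$-orbit'' and the relevant volumes are detected by $S_{x,y}(O_F/\pi^n)$ and therefore agree under $\phi_{F,E}$. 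One also uses that $\val_F(x)$ is independent of $F$ (condition (3)) so the normalizing constants match. Thus for each of the finitely many relevant triples there is an $n$ beyond which the $F$- and $E$-structure constants coincide; take $n$ to be the maximum, enlarged to also exceed $\ell$ and the $n$ from Kazhdan's theorem.

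Finally I would assemble the pieces: for this $n$, $\Psi_\ell$ intertwines the action of each algebra generator $1_{K_\ell\pi^\la K_\ell}$ (which corresponds under $\Phi_{\cH,\ell}$) and each element of the finite group $K_0/K_\ell$ (which corresponds under $\phi_{F,E}$) with the corresponding operators on the $E$-side; since these generate $\cH_\ell(G,F)$, $\Psi_\ell$ is an isomorphism of $\cH_\ell$-modules, and it is a bijection because it sends a basis to a basis. Uniqueness follows as noted above. I expect the main obstacle to be the second paragraph: carefully setting up the structure constants so that they are genuinely ``local'' counts over $O_F/\pi^n$, and checking that the smoothness of $S_{x,y}$ plus Proposition \ref{prop:cond.ball} really do let one read off both the incidence pattern and the volumes from a finite level — in particular handling the interaction between left multiplication by $\pi^\la K_\ell(F)$ and the decomposition of $K_0(F)x$ into $K_\ell(F)$-orbits, which is where conditions \ref{cond:connectors}, (3) and \ref{cond:GR} of Definition \ref{defn:good.pair} all get used. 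The remaining steps are bookkeeping, modeled closely on \cite{Kaz}.
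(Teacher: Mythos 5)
Your high-level plan is on the right track, and your second paragraph captures essentially the right geometric content — the identifications of $\Phi_{\cM,\ell}$ via Lemma \ref{lem:rough.bijection} and Lemma \ref{lem:equal.stabilizers}, and the locality of the structure constants (which the paper packages as Corollary \ref{cor:CharFun} and Proposition \ref{prop:local.isom}). But the concluding step contains a real gap.

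You write that one should ``take $n$ to be the maximum'' over the finitely many relevant structure constants. The problem is that there are not finitely many: the Hecke algebra has infinitely many double-coset generators $1_{K_\ell\pi^\la K_\ell}$ ($\la$ ranges over a lattice), and the module has infinitely many basis elements $1_{gK_\ell(F)x}$ ($x$ ranges over the infinite set $\De$). Lemma \ref{lem:cond.fin} bounds, for a \emph{fixed} $(\la,x)$, the set of $y$ that can appear; it does not bound the set of $(\la,x)$ that must be considered. What Proposition \ref{prop:local.isom} actually gives you is a per-pair statement: for each fixed $f\in\cH_\ell(F)$ and $v\in\cM_\ell(F)$ there is an $n=n(f,v)$, and a priori these $n$'s are unbounded. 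Taking a maximum over the infinite family is not available, so your argument does not yield a single $n$ making $\Phi_{\cM,\ell}$ a module homomorphism.

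The paper closes this gap using \emph{finite presentation}, not merely finite generation. Since the Hecke algebra $\cH_\ell(G,F)$ is Noetherian (from Bernstein's theory, recorded as statement (4) in Subsection \ref{subsec:prel}) and $\cM_\ell(G(F)/H(F))$ is finitely generated over it (Proposition \ref{prop:FG}), the module is finitely presented: generators $v_1,\dots,v_m$ with a finite set of relations $\sum_i f_{i,j}v_i=0$. Now Proposition \ref{prop:local.isom} is applied only to the \emph{finitely many} pairs $(f_{i,j},v_i)$. This produces a single $n$ after which a module homomorphism $\Phi'$ exists sending $v_i\mapsto\Phi_{\cM,\ell}(v_i)$. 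One then checks (using the intertwining of $K_0/K_\ell$ and condition \ref{cond:GR} of Definition \ref{defn:good.pair}) that $\Phi'$ sends $1_{K_\ell(F)x}\mapsto 1_{K_\ell(E)x}$ for all $x\in\De$, so by the uniqueness in Corollary \ref{cor:LinIso} one has $\Phi'=\Phi_{\cM,\ell}$, which is therefore both a linear isomorphism and a module homomorphism. Your proposal cites Proposition \ref{prop:FG} but never passes from finite generation to finite presentation, which is precisely the step that replaces the (illegitimate) ``maximum over all structure constants'' with a legitimate maximum over the finitely many generator–relation pairs.
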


For the proof we will need notation and several lemmas.

\begin{notation}
For any valued field $F$ with uniformizer $\pi$ and any integer $m \in \bZ$, we
denote by $\res_m:F \to F / \pi^m O$ the projection. Note that the
groups $\pi^n O$ are naturally isomorphic for all $n$.
Hence if two local fields $F,E\in\cF_{R,\pi}$ are $n$-close,
then for any $m$ we are given an isomorphism, which we also denote by $\phi_{F,E}$ between $\pi^{m-n} O_F/\pi^{m}O_F$ and $\pi^{m-n} O_{E} /\pi^{m}O_{E}$, which are subgroups of
$F/\pi^{m}O_F$ and $E/\pi^{m}O_{E}$.
\end{notation}

\begin{lem} \lbl{lem:equal.stabilizers} Suppose that $(G,H)$ is a uniform spherical pair, and suppose that $F,E\in\cF_{R,\pi}$ are $\ell$-close. Then for all $\de\in\De$,
\[
\phi_{F,E}(\Stab_{K_0(F)/K_\ell(F)}K_\ell(F) \de )=\Stab_{K_0(E)/K_\ell(E)}K_\ell(E) \de.
\]
\end{lem}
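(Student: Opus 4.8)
\textbf{Proof plan for Lemma \ref{lem:equal.stabilizers}.}
The idea is that membership of an element $g\in K_0/K_\ell$ in the stabilizer of the coset $K_\ell(F)\de$ is controlled by whether a certain smooth scheme has an $O_F/\pi^\ell$-point, and this last condition depends only on the quotient $O_F/\pi^\ell$, hence only on the $\ell$-closeness class of $F$. Concretely, fix $\de\in\De$ and lift a class $\bar g\in K_0(F)/K_\ell(F)$ to $g\in K_0(F)=G(O_F)$. Then $\bar g$ stabilizes $K_\ell(F)\de$ if and only if there exists $k\in K_\ell(F)$ with $kg\de=\de$ in $(G/H)(F)$, i.e. if and only if the scheme $S_{\de,\de}$ (the smooth closure over $R$ of $T_{\de,\de}$ from Definition \ref{defn:good.pair}\eqref{cond:connectors}) has an $O_F$-point that is congruent to $g$ modulo $\pi^\ell$.

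First I would make this precise by introducing, for each $g\in G(O_F)$, the fiber of the reduction map $S_{\de,\de}(O_F)\to G(O_F/\pi^\ell)$ over the image $\bar g$ of $g$; call the corresponding subscheme $S_{\de,\de}^{(\bar g)}$, cut out inside $S_{\de,\de}\times_{\Spec R}\Spec O_F$ by the congruence conditions. By Lemma \ref{lem:SO}, $S_{\de,\de}(O_F)=T_{\de,\de}(F)\cap G(O_F)=\{k\in G(O_F)\mid k\de=\de\}$, so $\bar g$ stabilizes $K_\ell(F)\de$ precisely when $S_{\de,\de}^{(\bar g)}(O_F)\neq\emptyset$. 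Now $S_{\de,\de}$ is smooth over $R$ by hypothesis, hence smooth over $O_F$, hence formally smooth; therefore the reduction map $S_{\de,\de}(O_F)\to S_{\de,\de}(O_F/\pi^\ell)$ is surjective, and $S_{\de,\de}^{(\bar g)}(O_F)\neq\emptyset$ if and only if $\bar g$ lies in the image of $S_{\de,\de}(O_F)\to G(O_F/\pi^\ell)$, equivalently in the image of $S_{\de,\de}(O_F/\pi^\ell)\to G(O_F/\pi^\ell)$. This last condition is expressed entirely in terms of the finite ring $O_F/\pi^\ell$ and the reduction mod $\pi^\ell$ of the defining equations of $S_{\de,\de}$ (which come from $R$); since $F$ and $E$ are $\ell$-close, the isomorphism $\phi_{F,E}\colon O_F/\pi^\ell\to O_E/\pi^\ell$ is compatible with the structure maps from $R$, so it identifies $S_{\de,\de}(O_F/\pi^\ell)$ with $S_{\de,\de}(O_E/\pi^\ell)$ and the corresponding images in $G(O_F/\pi^\ell)\cong G(O_E/\pi^\ell)$. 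Hence $\bar g\in\Stab_{K_0(F)/K_\ell(F)}K_\ell(F)\de$ if and only if $\phi_{F,E}(\bar g)\in\Stab_{K_0(E)/K_\ell(E)}K_\ell(E)\de$, which is the assertion.

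The one point that needs a little care — and which I expect to be the main (small) obstacle — is the bookkeeping of the congruence condition: one must check that ``$k\equiv g\pmod{\pi^\ell}$ in $G(O_F)$'' is the same as ``the image of $k$ in $G(O_F/\pi^\ell)$ equals $\bar g$'', i.e. that $K_\ell(F)$ is exactly the kernel of $G(O_F)\to G(O_F/\pi^\ell)$ (true by definition of the congruence subgroup) and that this kernel's reduction behaviour is preserved under $\phi_{F,E}$. Everything else is formal: smoothness of $S_{\de,\de}$ over $R$ gives formal smoothness over $O_F$ and hence the lifting of $O_F/\pi^\ell$-points, and the whole existence question is thereby pushed down to the common finite quotient $O_F/\pi^\ell\cong O_E/\pi^\ell$.
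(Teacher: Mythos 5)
Your proposal is correct and takes essentially the same approach as the paper: both arguments identify the stabilizer of $K_\ell(F)\de$ with the image of $S_{\de,\de}(O_F)$ in $G(O_F/\pi^\ell)$, use smoothness of $S_{\de,\de}$ over $R$ (hence over $O_F$, hence formal smoothness) to conclude that $S_{\de,\de}(O_F)\to S_{\de,\de}(O_F/\pi^\ell)$ is surjective, and then observe that the resulting description depends only on the finite ring $O_F/\pi^\ell$, which $\phi_{F,E}$ identifies with $O_E/\pi^\ell$ compatibly with the $R$-structure. The extra scaffolding with the fiber subscheme $S_{\de,\de}^{(\bar g)}$ is just a reformulation of the same point.
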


\begin{proof} The stabilizer of $K_\ell(F)\de$ in
$K_0/K_\ell$ is the projection of the stabilizer of $\de$
in $K_0$ to $K_0/K_\ell$. In other words, it is the image of
$S_{\de,\de}(O_F)$ in $S_{\de,\de}(O_F/\pi^\ell)$.
Since
$S_{\de,\de}$ is smooth over $R$, it is smooth over $O_F$. Hence $S_{\de,\de}$ is formally smooth, and so
this map is onto. The same applies to the stabilizer of
$K_\ell(E)\de$ in $K_0(E)/K_\ell(E)$, but
$\phi_{F,E}(S_{\de,\de}(O/\pi^\ell))=S_{\de,\de}(O'/\pi'^\ell)$.
\end{proof}

\begin{cor}\lbl{cor:LinIso}
Let $\ell \in \bN$.
\Removed{Suppose that the module $\cM_\ell(G(F)/H(F))$ is finitely generated over the Hecke algebra $\cH(G(F),K_\ell(F))$ for any $F\in\cF_{R,\pi}$. }

Then, for any $F,E\in\cF_{R,\pi}$ that are $\ell$-close, there exists a unique \Dima{morphism }
of vector spaces $$\Phi_{\cM,\ell}:\cM_\ell(G(F)/H(F)) \to \cM_\ell(G(E)/H(E))$$ that maps the Haar measure on $K_\ell(F)x$ to the Haar measure on $K_\ell(E)x$, for every $x\in\De$, and intertwines the actions of the finite group $K_0(F)/K_{\ell}(F)\stackrel{\phi_{F,E}}{\cong}K_0(E)/K_{\ell}(E)$. \Dima{Moreover, this morphism is an isomorphism.}
\end{cor}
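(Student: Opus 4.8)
The plan is to construct the map $\Phi_{\cM,\ell}$ by hand on the natural basis and then check that it is well-defined and bijective. By Lemma \ref{lem:rough.bijection}, the set $\De\subset\pi^\Upsilon\fX$ maps bijectively onto $K_0(F)\bs G(F)/H(F)$, and the $K_\ell(F)$-orbits inside a fixed $K_0(F)$-orbit $K_0(F)\de$ are permuted transitively by the finite group $K_0(F)/K_\ell(F)$. Hence the measures $1_{gK_\ell(F)\de}$, for $\de\in\De$ and $g$ running over a set of coset representatives for $\bigl(K_0(F)/K_\ell(F)\bigr)\big/\Stab_{K_0(F)/K_\ell(F)}(K_\ell(F)\de)$, form a $\bC$-basis of $\cM_\ell(G(F)/H(F))$; the analogous statement holds for $E$. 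The key point is that, by Lemma \ref{lem:equal.stabilizers}, once $F$ and $E$ are $\ell$-close the isomorphism $\phi_{F,E}:K_0(F)/K_\ell(F)\to K_0(E)/K_\ell(E)$ carries $\Stab_{K_0(F)/K_\ell(F)}(K_\ell(F)\de)$ onto $\Stab_{K_0(E)/K_\ell(E)}(K_\ell(E)\de)$ for each $\de\in\De$. Therefore the prescription ``$1_{K_\ell(F)\de}\mapsto 1_{K_\ell(E)\de}$ and intertwine the $K_0/K_\ell$-action'' consistently defines a linear map on basis elements: we set $\Phi_{\cM,\ell}\bigl(g\cdot 1_{K_\ell(F)\de}\bigr):=\phi_{F,E}(g)\cdot 1_{K_\ell(E)\de}$, and the stabilizer agreement guarantees this does not depend on the choice of representative $g$.

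Next I would verify the three asserted properties. \emph{Uniqueness} is immediate: any linear map with the stated properties must send $g\cdot 1_{K_\ell(F)\de}$ to $\phi_{F,E}(g)\cdot 1_{K_\ell(E)\de}$, and these elements span the source. \emph{Well-definedness} is exactly the stabilizer computation above. \emph{Bijectivity} follows because the same construction with the roles of $F$ and $E$ reversed (using $\phi_{E,F}=\phi_{F,E}^{-1}$, which is defined since $\ell$-closeness is symmetric) produces a two-sided inverse; alternatively, $\Phi_{\cM,\ell}$ sends a basis to a basis, hence is an isomorphism of vector spaces. It is worth noting that one must first check that the number of $K_\ell(F)$-orbits in $K_0(F)\de$ equals the number of $K_\ell(E)$-orbits in $K_0(E)\de$, but this is again a direct consequence of Lemma \ref{lem:equal.stabilizers}, since that common cardinality is the index $[K_0/K_\ell : \Stab]$ computed in either field.

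The only genuine subtlety—and the place where the hypothesis that $(G,H)$ is uniform spherical is used—is Lemma \ref{lem:equal.stabilizers}, which has already been established, so at this stage the argument is essentially bookkeeping. I expect the main obstacle in writing it cleanly to be purely organizational: fixing, once and for all, a compatible system of coset representatives for $\bigl(K_0/K_\ell\bigr)/\Stab(K_\ell\de)$ that works simultaneously for $F$ and $E$ via $\phi_{F,E}$, so that the formula for $\Phi_{\cM,\ell}$ on basis vectors is literally the same on both sides. With that fixed, the verification that $\Phi_{\cM,\ell}$ is well-defined, unique, and invertible is a short finite-group computation, and no finite-generation hypothesis on the module is needed for this corollary (it will only enter later, in the proof of Theorem \ref{thm:phiModIso}, to promote this linear isomorphism to a module isomorphism compatible with products in the Hecke algebra).
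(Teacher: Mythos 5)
Your proof is correct and takes essentially the same route as the paper: Lemma \ref{lem:rough.bijection} reduces to the $K_0$-orbits indexed by $\De$, and Lemma \ref{lem:equal.stabilizers} transports the $K_0/K_\ell$-orbit structure (via matching stabilizers) from $F$ to $E$, giving a bijection of $K_\ell$-orbit bases. Your write-up simply spells out more explicitly the characteristic-function basis and the double-coset bookkeeping that the paper leaves implicit.
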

\begin{proof}
The uniqueness is evident.
By Lemma \ref{lem:equal.stabilizers} and Lemma \ref{lem:rough.bijection},
the map between $K_\ell(F)\bs G(F)/H(F)$ and $K_\ell(E)\bs G(E)/H(E)$ given by
\[
K_\ell(F) g\de \mapsto K_\ell(E) g'\de,
\]
where $g\in K_0(F)$ and $g'\in K_0(E)$ satisfy that
$\phi_{F,E}(\res_\ell(g))=\res_\ell(g')$, is a bijection.
This  bijection gives the required isomorphism.
\end{proof}

\begin{remark}
A similar construction can be applied to the pair $(G\times G,\Delta G)$. In this case, the main result of \cite{Kaz} is that the obtained linear map $\Phi_{\cH,\ell}$ between the Hecke algebras $\cH(G(F),K_\ell(F))$ and $\cH(G(E),K_\ell(E))$ is an isomorphism of algebras if the fields $F$ and $E$ are close enough.
\end{remark}

The following Lemma is evident:
\begin{lem} Let $P(x)\in R[\pi^{-1}][x_1,\ldots,x_d]$ be a polynomial. For any natural numbers $M$ and $k$, there is $N$ such that, if $F,E\in\cF_{R,\pi}$ are $N$-close, and $x_0\in \pi^{-k}O_F^d,y_0\in \pi^{-k}O_E^d$ satisfy that $P(x_0)\in \pi^{-k}O_F$ and $\phi_{F,E}(\res_{N}(x_0))=\res_{N}(y_0)$, then $P(y_0)\in \pi^{-k}O_E$ and $\phi_{F,E}(\res_{M}(P(x_0)))=\res_{M}(P(y_0))$.
\end{lem}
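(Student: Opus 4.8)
The plan is to remove the denominators of $P$ together with the $\pi^{-k}$ occurring in $x_0$ by a single rescaling, thereby reducing to a genuine polynomial over $R$ evaluated at integral points, and then to exploit that reduction modulo a high enough power of $\pi$ turns such an evaluation into a computation inside the finite ring $O_F/\pi^{N'}O_F$, which for $N'$-close fields is canonically identified with $O_E/\pi^{N'}O_E$ by $\phi_{F,E}$.

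First I would fix $c\geq 0$, depending only on $P$, with $\pi^c P\in R[x_1,\dots,x_d]$ (possible since $P$ has finitely many coefficients, all lying in $R[\pi^{-1}]$), and set $e:=\deg P$. The substitution $x_i=\pi^{-k}z_i$ then produces a polynomial $Q(z):=\pi^{c+ke}P(\pi^{-k}z)\in R[z_1,\dots,z_d]$ depending only on $P$ and $k$. For any $F\in\cF_{R,\pi}$, writing $x_0=\pi^{-k}z_0$ with $z_0\in O_F^d$, one gets the identity $\pi^{c+ke}P(x_0)=Q(z_0)$ in $F$ with $Q(z_0)\in O_F$; in particular the hypothesis $P(x_0)\in\pi^{-k}O_F$ translates into $Q(z_0)\in\pi^{\,c+(e-1)k}O_F$.

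Next I would take $N$ large; it will suffice that $N\geq M+c+(e+1)k$, and in any case $N$ must be large enough that all the truncations below live in quotient rings on which $\phi_{F,E}$ is defined. Assume $F,E$ are $N$-close and set $w_0:=\pi^k y_0\in O_E^d$ and $N':=M+c+ke$. Multiplying the hypothesis $\phi_{F,E}(\res_N(x_0))=\res_N(y_0)$ by the image of $\pi^k\in R$ shows that $z_0$ and $w_0$ have the same image in $(O_F/\pi^{N'}O_F)^d\cong (O_E/\pi^{N'}O_E)^d$. Since $\phi_{F,E}$ is a homomorphism of $R$-algebras $O_F/\pi^{N'}O_F\to O_E/\pi^{N'}O_E$, it commutes with evaluation of the $R$-polynomial $Q$, so $\phi_{F,E}(\res_{N'}(Q(z_0)))=\res_{N'}(Q(w_0))$. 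As $N'>c+(e-1)k$, this forces $Q(w_0)\in\pi^{\,c+(e-1)k}O_E$, that is $P(y_0)\in\pi^{-k}O_E$; and dividing the displayed congruence by the image of $\pi^{c+ke}$ — which lowers the modulus by $c+ke$ — yields precisely $\phi_{F,E}(\res_M(P(x_0)))=\res_M(P(y_0))$. The only point requiring care is the bookkeeping of $\pi$-powers that fixes how large $N$ must be; as the formulation of the lemma suggests, there is no genuine obstacle here.
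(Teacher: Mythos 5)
Your proof is correct. The paper declares this lemma ``evident'' and supplies no argument of its own, so there is nothing to compare with; the rescaling you perform---clearing both the denominators of $P$ and the $\pi^{-k}$ in the argument simultaneously so as to reduce to an $R$-polynomial $Q$ evaluated on integral tuples, then using that $\phi_{F,E}$ is an $R$-algebra isomorphism of truncated rings and hence commutes with evaluation of $Q$---is exactly the natural way to make the assertion precise, and your accounting of the moduli is sound. One small caveat: with the paper's definition of $\phi_{F,E}$ (an isomorphism $\pi^{m-n}O_F/\pi^m O_F\cong\pi^{m-n}O_E/\pi^m O_E$ for $n$-close fields), the expression $\phi_{F,E}(\res_N(x_0))$ for $x_0\in\pi^{-k}O_F^d$ strictly speaking needs $(N+k)$-closeness rather than $N$-closeness, since $\res_N(x_0)$ lies in $\pi^{-k}O_F^d/\pi^N O_F^d$. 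This is harmless because $N$ is at your disposal and, in the paper's subsequent corollaries, the lemma is only invoked with integral $x_0$ anyway; it is, however, the reason your step ``multiplying by $\pi^k$'' can only safely be said to give agreement of $z_0$ and $w_0$ modulo $\pi^{N}$ rather than modulo $\pi^{N+k}$---which your margin $N\geq N'+k$ already absorbs.
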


\begin{cor} Suppose that $(G,H)$ is a uniform spherical pair.
Fix an embedding of $G/H$ to an affine space $\bA^d$. Let $\la \in X_*(T)$, $x \in \pi^{\Upsilon}\fX$, $F\in\cF_{R,\pi}$, and  $k \in G(O_F)$. Choose $m$ such that $\pi^{\la} k x \in \pi^{-m}O_F^d$. Then, for every $M$, there is $N\geq M+m$ such that, for any $E\in\cF_{R,\pi}$ that is $N$-close to $F$, and for any $k' \in G(O_E)$ such that $\phi_{F,E}(res_N(k))=res_N(k')$,
\[
\pi^{\la} k' x \in G(E)/H(E) \cap \pi^{-m}O_E^d \text{ and } \phi_{F,E}(res_M(\pi^{\la} k x))=res_M(\pi^{\la} k' x).
\]
\end{cor}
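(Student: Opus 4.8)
The plan is to reduce the statement to the polynomial lemma stated just above it, by applying that lemma coordinate by coordinate to the orbit map $g\mapsto \pi^\la g x$.

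First I would make the orbit map explicit. After fixing a closed embedding $G\hookrightarrow \bA^s_R$ of $R$-schemes, the map $g\mapsto \pi^\la g x$ is the composite of left translation by $\pi^\la$, the action morphism $g\mapsto gx$, and the chosen affine embedding $G/H\hookrightarrow \bA^d$. Here $\pi^\la=\la(\pi)$ lies in $T(R[\pi^{-1}])$, since $\la\in X_*(T)$ is defined over $R$ and $\pi$ is invertible in $R[\pi^{-1}]$; and $x\in\pi^\Upsilon\fX\subset (G/H)(R[\pi^{-1}])$ by condition \ref{cond:connectors} of Definition \ref{defn:good.pair}, so $g\mapsto gx$ is defined over $R[\pi^{-1}]$. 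Hence all three maps are defined over $R[\pi^{-1}]$, and in the coordinates $z_1,\dots,z_s$ of $G\subset\bA^s$ the composite is given by $d$ polynomials $P_1,\dots,P_d\in R[\pi^{-1}][z_1,\dots,z_s]$, with $(\pi^\la g x)_i=P_i(z_1(g),\dots,z_s(g))$.

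Next I would set up the input to the preceding lemma. We may assume $m\geq 0$. Let $z^F\in O_F^s$ be the coordinates of $k$; then $z^F\in\pi^{-m}O_F^s$, and by hypothesis $P_i(z^F)=(\pi^\la k x)_i\in\pi^{-m}O_F$ for every $i$. Applying the preceding lemma to each $P_i$ with its two natural-number parameters taken to be $M$ and $m$, I obtain integers $N_i$, and I set $N:=\max(N_1,\dots,N_d,\,M+m)$. If $E\in\cF_{R,\pi}$ is $N$-close to $F$ and $k'\in G(O_E)$ satisfies $\phi_{F,E}(\res_N(k))=\res_N(k')$, then, writing $z^E\in O_E^s$ for the coordinates of $k'$, we have $z^E\in\pi^{-m}O_E^s$ and $\phi_{F,E}(\res_{N_i}(z^F))=\res_{N_i}(z^E)$ for each $i$ (as $N\geq N_i$). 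The preceding lemma then yields $P_i(z^E)\in\pi^{-m}O_E$ and $\phi_{F,E}(\res_M(P_i(z^F)))=\res_M(P_i(z^E))$ for all $i$.

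Finally I would assemble the conclusion. Since $k'\in G(O_E)\subset G(E)$, $x\in(G/H)(E)$ (via $R[\pi^{-1}]\to E$), and $\pi^\la\in T(E)$, the point $\pi^\la k' x$ automatically lies in $(G/H)(E)\subset\bA^d(E)=E^d$, and its coordinates are precisely the $P_i(z^E)$; thus $\pi^\la k' x\in G(E)/H(E)\cap\pi^{-m}O_E^d$ and $\phi_{F,E}(\res_M(\pi^\la k x))=\res_M(\pi^\la k' x)$, as required. The only point needing care is the bookkeeping with the pole order $m$: one must check that the precision $N\geq M+m$ is enough for $\phi_{F,E}$ to identify $\pi^{-m}O/\pi^M O$ correctly on the two sides, which is exactly why the bound $M+m$ appears in the statement. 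But this is precisely what the preceding lemma already packages, so there is no real obstacle; the corollary is essentially a geometric restatement of that lemma in terms of the orbit map.
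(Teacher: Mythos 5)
Your proof is correct and is essentially the implicit argument the paper intends: the paper states this corollary without proof, as an immediate coordinatewise application of the preceding polynomial lemma to the orbit map $g\mapsto\pi^\la gx$, which is precisely what you carry out. The one housekeeping remark — replacing $m$ by $\max(m,0)$ so it is a natural number, and taking $N\geq M+m$ so that $\phi_{F,E}$ is defined on $\pi^{-m}O/\pi^M O$ — is handled appropriately.
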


\begin{cor} Suppose that $(G,H)$ is a uniform spherical pair.
Fix an embedding of $G/H$ to an affine space $\bA^d$. Let $m$ be an integer. For every $M$, there is $N$ such that, for any $F,E\in\cF_{R,\pi}$ that are $N$-close, any $x \in G(F)/H(F) \cap \pi^{-m}O_F^d$ and any $y\in G(E)/H(E) \cap \pi^{-m}O_E^d$, such that $\phi_{F,E}(res_{N-m}(x))=res_{N-m}(y)$, we have $\Phi_{\cM}(1_{K_M(F)} x) = 1_{K_M(E)} y$.
\end{cor}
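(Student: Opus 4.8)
The plan is to reduce this last corollary to the previous one by a cardinality/finiteness argument, using Corollary~\ref{cor:LinIso} together with the preceding corollary about the explicit description of $\Phi_{\cM,\ell}$ on single cosets $1_{K_M(F)}x$. First I would fix $M$ and recall that, by Lemma~\ref{lem:rough.bijection} and Proposition~\ref{prop:cond.ball}, the set $G(F)/H(F)\cap\pi^{-m}O_F^d$ is, for every $F\in\cF_{R,\pi}$, a finite union of balls of some bounded valuative radius, and likewise $K_M(F)x$ contains a ball of a radius $M'$ that can be chosen uniformly in $F$ and in $x$ ranging over the relevant finite set of representatives. The point of the statement is precisely that the identification $\Phi_{\cM}$ of $K_M$-invariant measures, which was defined abstractly via the bijection on double cosets, is in fact ``given by the coordinates'': two cosets $K_M(F)x$ and $K_M(E)y$ correspond under $\Phi_{\cM}$ as soon as $x$ and $y$ have the same reduction modulo $\pi^{N-m}$ (suitably interpreted via $\phi_{F,E}$).

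The key steps, in order, would be: (1) Use Lemma~\ref{lem:cond.fin} and Lemma~\ref{lem:rough.bijection} to see that $G(F)/H(F)\cap\pi^{-m}O_F^d$ meets only finitely many double cosets $K_0(F)x$, $x\in\pi^{\Upsilon}\fX$, and that this finite set $B\subset\pi^{\Upsilon}\fX$ is independent of $F$. (2) For each $x\in B$, apply the previous corollary to get, for a given target precision $M$, an integer $N_x\geq M+m$ so that for $N_x$-close fields the coordinate data of $\pi^{\la}kx$ modulo $\pi^M$ is transported by $\phi_{F,E}$ from a modulo-$\pi^{N_x}$ datum of $k$; take $N$ to be the maximum of the $N_x$ plus a safety margin coming from Lemma~\ref{lem:equal.stabilizers} so that stabilizers in $K_0/K_M$ also match. (3) Given $x\in G(F)/H(F)\cap\pi^{-m}O_F^d$ and $y$ with $\phi_{F,E}(\res_{N-m}(x))=\res_{N-m}(y)$, write $x=gx_0$ with $x_0\in B$, $g\in K_0(F)$; note that $\res_{N-m}(x)$ determines $K_M(F)x$ (since $K_M(F)x_0$ contains a ball of radius $\geq N-m$ once $N$ is large), hence determines, via the coset bijection of Corollary~\ref{cor:LinIso} and the coordinate compatibility from step~(2), exactly the coset $K_M(E)y'$ with $\Phi_{\cM}(1_{K_M(F)}x)=1_{K_M(E)}y'$; and one checks $y'=y$ because $y$ lies in the same ball and the balls are disjoint. (4) Conclude $\Phi_{\cM}(1_{K_M(F)}x)=1_{K_M(E)}y$.

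I expect the main obstacle to be bookkeeping the interaction of three different precisions at once: the level $M=\ell$ defining the Hecke-module structure, the truncation level $N$ at which the fields are close, and the exponent $m$ controlling where in $\pi^{-m}O_F^d$ the points sit; in particular one must make sure the radius of the ball inside $K_M(F)x_0$ (produced by Proposition~\ref{prop:cond.ball}, via Lemma~\ref{lem:ball.crit}) is comfortably larger than $N-m$ after the translation by $g$, uniformly over the finite set $B$ and over $g\in K_0$. A second, more routine point is verifying that the abstractly-defined map $\Phi_{\cM,\ell}$ of Corollary~\ref{cor:LinIso} really does send $1_{K_M(F)}(g\de)$ to $1_{K_M(E)}(g'\de)$ whenever $\phi_{F,E}(\res_M(g))=\res_M(g')$ — this is immediate from the way that corollary was proved, since the bijection on $K_\ell\bs G/H$ was defined exactly by that rule, but it needs to be invoked explicitly. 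Everything else is an application of formal smoothness (to match stabilizers and to lift mod-$\pi$ points) and of the elementary lemma about polynomials preserving congruences under $\phi_{F,E}$ that precedes the statement.
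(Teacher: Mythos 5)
Your proof takes essentially the same route as the paper's: invoke the preceding corollary to transport the coordinate data of $x = k_F\delta$ (with $\delta\in\Delta$) across $\phi_{F,E}$ to a point $k_E\delta$ with matching residue, and then use the ball property from Proposition~\ref{prop:cond.ball} to conclude that this point and $y$ lie in the same $K_M(E)$-coset, so that $\Phi_{\cM}(1_{K_M(F)}x)=1_{K_M(E)}y$. You are in fact a bit more explicit than the paper about why $N$ can be chosen uniformly over $x$ (the paper's own proof fixes $x$ and derives $l$ and $N$ depending on $\delta$, leaving the finiteness of the relevant $\delta$'s implicit); do note, though, that this finiteness comes from condition~(3) of Definition~\ref{defn:good.pair} together with the local-compactness argument used \emph{inside} the proof of Lemma~\ref{lem:cond.fin}, rather than from the statement of that lemma as you cite it.
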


\begin{proof}
Let $k_F \in G(O_F)$ and $\delta \in \Delta$ such that $x=k_F\de$. By Proposition \ref{prop:cond.ball}, there is an $l$ such that, for any $L\in\cF_{R,\pi}$ and any $k_L \in
G(O_L)$, we have $K_M(L)k_L\de$ contains a ball of radius $l$.

Using the previous corollary, choose an integer $N$ such that, for any $F$ and $E$ that are $N$-close and any $k_E \in G(O_E)$, such that $\phi_{F,E}(res_N(k_F))=res_N(k_E)$, we have
\[
k_E \delta \in (G(E)/H(E)) \cap \pi^{-m}O_E^d \text{ and }\phi_{F,E}(res_l(x))=res_l(k_E \delta).
\]
Choose such $k_E \in G(O_E)$ and let $z = k_E \delta$. Since $\res_l(z)=\phi_{F,E}(res_l(x))=res_l(y)$, we have that $z \in B(y,l)$, and hence $z \in K_M(E)y$. Hence

\[
1_{K_M(E)}y=1_{K_M(E)}z = \Phi_{\cM}(1_{K_M(F)} x).
\]
\end{proof}

From the last two corollaries we obtain the following one.
\begin{cor}\lbl{cor:CharFun} Given $\ell \in \bN$, $\la \in X_*(T)$, and $\de \in \Delta$, there is $n$ such that if $F,E\in\cF_{R,\pi}$ are $n$-close, and $g_F\in G(O_F),g_E\in G(O_E)$ satisfy that $\phi_{F,E}(\res_n(g_F))=\res_n(g_E)$, then $\Phi_{\cM,\ell}(1_{K_\ell(F)} \pi^\la g_F \de)=1_{K_\ell(E)}\pi^\la g_E \de$.
\end{cor}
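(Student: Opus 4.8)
The plan is to obtain this corollary purely by combining the two corollaries that precede it, the only genuine work being an accounting of congruence levels together with one uniformity observation. Fix the affine embedding $G/H\hookrightarrow\bA^d$ once and for all. First I would produce an integer $m=m(\la,\de)$, independent of the field, such that $\pi^\la g\de\in\pi^{-m}O_F^d$ for every $F\in\cF_{R,\pi}$ and every $g\in G(O_F)$. This is possible because the $G$-action on $G/H\hookrightarrow\bA^d$ is a morphism defined over $R$, so $g\mapsto\pi^\la g\de$ is described by a fixed finite list of polynomials over $R[\pi^{-1}]$, while by condition (3) of Definition \ref{defn:good.pair} the valuation $\val_F(\de)$ is the same for all $F$; hence the coordinates of $\pi^\la g\de$ have valuation bounded below by a constant depending only on $\la$ and $\de$.

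Next I would feed this $m$, together with the target precision $M=\ell$, into the last of the two corollaries stated just above (the one whose conclusion is an identity of the form $\Phi_{\cM,\ell}(1_{K_\ell(F)}x)=1_{K_\ell(E)}y$), obtaining an integer $N$, which I may assume is $\geq m$, such that for any $N$-close $F,E$ and any $x\in G(F)/H(F)\cap\pi^{-m}O_F^d$, $y\in G(E)/H(E)\cap\pi^{-m}O_E^d$ with $\phi_{F,E}(\res_{N-m}(x))=\res_{N-m}(y)$, one has $\Phi_{\cM,\ell}(1_{K_\ell(F)}x)=1_{K_\ell(E)}y$. Then I would apply the first of those two corollaries (the one transporting $\pi^\la k\de$ from $F$ to $E$), taking its point to be $\de\in\De\subset\pi^\Upsilon\fX$ and its precision parameter to be $N-m$; this produces the sought integer $n\geq N$ so that, whenever $F,E$ are $n$-close and $g_F\in G(O_F)$, $g_E\in G(O_E)$ satisfy $\phi_{F,E}(\res_n(g_F))=\res_n(g_E)$, the points $\pi^\la g_F\de$ and $\pi^\la g_E\de$ lie in $G(F)/H(F)\cap\pi^{-m}O_F^d$ and $G(E)/H(E)\cap\pi^{-m}O_E^d$ respectively, and $\phi_{F,E}(\res_{N-m}(\pi^\la g_F\de))=\res_{N-m}(\pi^\la g_E\de)$. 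Since $n\geq N$, the fields are in particular $N$-close, so putting $x=\pi^\la g_F\de$ and $y=\pi^\la g_E\de$ into the previous step yields exactly $\Phi_{\cM,\ell}(1_{K_\ell(F)}\pi^\la g_F\de)=1_{K_\ell(E)}\pi^\la g_E\de$, which is the claim.

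The one place that needs care — and the step I would regard as the real obstacle — is the uniformity in $F$, and in $g_F$, of the integer $n$ coming from the transporting corollary, which as stated fixes the field and the element $k=g_F$ before choosing its output. This becomes automatic once $m$ has been chosen uniformly as above: that corollary is merely an unwinding of the polynomial lemma preceding it, and the integer produced by that lemma depends only on the polynomials involved (defined over $R$, hence independent of $F$), on the target precision, and on the valuation exponent $m$. A lesser but essential bookkeeping point is the synchronization of precisions: the transporting corollary must be invoked with precision parameter $N-m$ rather than $\ell$, because $\res_{N-m}$ is exactly the level at which the measure-comparison corollary demands its hypothesis $\phi_{F,E}(\res_{N-m}(x))=\res_{N-m}(y)$.
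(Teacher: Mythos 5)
Your proof is correct and takes essentially the same route as the paper, which simply asserts that the corollary follows from the two preceding ones; you have filled in the bookkeeping and, usefully, made explicit the uniformity in $F$ and $g_F$ of the bound coming from the first corollary (resting on the fact that the $N$ produced by the polynomial lemma depends only on the polynomials over $R$, the target precision, and the valuation bound $m$).
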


\begin{prop} \lbl{prop:local.isom} Let $F\in\cF_{R,\pi}$. Then for every $\ell$, and every two elements $f\in\cH_\ell(F)$ and $v\in\cM_\ell(F)$, there is $n$ such that, if $E\in\cF_{R,\pi}$ is $n$-close to $F$, then $\Phi_{\cM,\ell}(f\cdot v)=\Phi_{\cH,\ell}(f)\cdot\Phi_{\cM,\ell}(v)$.
\end{prop}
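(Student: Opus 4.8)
The statement to prove is Proposition \ref{prop:local.isom}: for fixed $F$, $\ell$, $f\in\cH_\ell(F)$ and $v\in\cM_\ell(F)$, if $E$ is sufficiently close to $F$ then $\Phi_{\cM,\ell}(f\cdot v)=\Phi_{\cH,\ell}(f)\cdot\Phi_{\cM,\ell}(v)$. The plan is to reduce to the case where $f$ and $v$ are (normalized) characteristic functions of basic double cosets, and then to carry out the convolution explicitly, quantifying how close $E$ must be to $F$ for the combinatorics of the convolution to be transported by $\phi_{F,E}$.

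First I would reduce to generators. The Hecke algebra $\cH_\ell(F)$ is spanned (over $\bC$) by the finitely many — or at least countably many, but only finitely many are needed once $f$ is fixed — elements $1_{K_\ell(F) g \pi^\la K_\ell(F)}$ with $g\in K_0(F)/K_\ell(F)$ and $\la\in X_*(T)$; similarly, by Lemma \ref{lem:rough.bijection} and the definition of $\cM_\ell$, the module $\cM_\ell(F)$ is spanned by the elements $1_{K_\ell(F) g\de}$ with $\de\in\De$ and $g\in K_0(F)/K_\ell(F)$, and by Corollary \ref{cor:CharFun} (together with Corollary \ref{cor:LinIso}) each such basis element is transported correctly by $\Phi_{\cM,\ell}$ once $E$ is $n$-close to $F$ for suitable $n$. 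Since $f$ and $v$ are fixed, only finitely many of these generators occur in expansions of $f$ and $v$, and since the claimed identity is bilinear in $(f,v)$ and $\Phi_{\cM,\ell},\Phi_{\cH,\ell}$ are linear, it suffices to prove the identity when $f=1_{K_\ell(F)\pi^\la K_\ell(F)}$ (absorbing the $K_0/K_\ell$-translate into the element of the finite group, whose action is intertwined by construction) and $v=1_{K_\ell(F)\de}$ for $\de\in\De$.

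Next, for such $f$ and $v$, I would write out $f\cdot v$ concretely. Decompose $K_\ell(F)\pi^\la K_\ell(F) = \bigsqcup_j k_j K_\ell(F)\pi^\la$ into finitely many left $K_\ell(F)$-cosets (finitely many because $\pi^\la K_\ell(F)\pi^{-\la}\cap K_\ell(F)$ has finite index and $K_0(F)$ is compact); then $f\cdot v$ is, up to an explicit normalization constant coming from Haar measures of congruence subgroups (a power of $q=|O_F/\cP_F|$, which by Lemma \ref{lem:equal.stabilizers}-type reasoning and the fact that $|O_F/\cP_F^\ell|$ depends only on the $R$-structure is the same for $E$), a sum of terms $1_{K_\ell(F)k_j\pi^\la\de}$. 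By Lemma \ref{lem:cond.fin} the points $\pi^\la\de$ lie in a fixed finite subset $B\subset\pi^\Upsilon\fX$ of double-coset representatives, so each $k_j\pi^\la\de$ lies in $\pi^{-m}O_F^d$ for a uniform $m$, and one invokes Corollary \ref{cor:CharFun} / the two corollaries preceding Corollary \ref{cor:CharFun} to see that for $E$ sufficiently $n$-close each $1_{K_\ell(F)k_j\pi^\la\de}$ is mapped by $\Phi_{\cM,\ell}$ to $1_{K_\ell(E)k_j'\pi^\la\de}$ with $\phi_{F,E}(\res_n(k_j))=\res_n(k_j')$. Meanwhile $\Phi_{\cH,\ell}(f)=1_{K_\ell(E)\pi^\la K_\ell(E)}$ by Kazhdan's theorem, and its left-coset decomposition is $\bigsqcup_j k_j'K_\ell(E)\pi^\la$ with the same $k_j'$ (this is exactly the content of Kazhdan's construction, namely that the Cartan/Iwahori combinatorics match modulo $\pi^n$); so $\Phi_{\cH,\ell}(f)\cdot\Phi_{\cM,\ell}(v)$ equals the same sum of $1_{K_\ell(E)k_j'\pi^\la\de}$ with the same normalization. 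Taking $n$ large enough to satisfy all the finitely many closeness requirements above (one for the coset decomposition of $f$, one per term $\pi^\la\de$ via the corollaries, and Kazhdan's $n$) finishes the argument.

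The main obstacle is bookkeeping rather than conceptual: one must make sure that the single integer $n$ can be chosen to handle simultaneously (a) the left-coset decomposition of $K_\ell(F)\pi^\la K_\ell(F)$ being ``the same'' over $F$ and $E$ — this is where one leans on Kazhdan's theorem and the fact that the relevant group-scheme data ($S_{\de,\de}$, the intersections $\pi^\la K_\ell\pi^{-\la}\cap K_0$) are defined over $R$ and hence have fiberwise-constant reductions mod $\pi^\ell$ — and (b) that the finitely many translated points $k_j\pi^\la\de$ have their $\res_M$-classes correctly transported, which is precisely the corollary chain ending at Corollary \ref{cor:CharFun}. Once the reduction to characteristic functions of single cosets $K_\ell g\de$ is in place and the normalization constants are checked to be field-independent, the identity becomes a term-by-term comparison and the proof concludes.
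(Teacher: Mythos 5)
Your overall plan---reduce by linearity to indicator functions of basic cosets, compute the convolution explicitly, and match terms between $F$ and $E$ via $\phi_{F,E}$ and Corollary \ref{cor:CharFun}---is the correct spirit and closely parallels the paper. But there is one non-trivial technical choice where you diverge from the paper's argument, and that choice creates a gap.

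You decompose the double coset $K_\ell(F)\pi^\la K_\ell(F)$ directly into $K_\ell(F)$-cosets with representatives $k_j\pi^\la$, compute $f\cdot v$ as a sum of $1_{K_\ell(F)k_j\pi^\la\de}$, and then assert that the $E$-side decomposition of $K_\ell(E)\pi^\la K_\ell(E)$ has matching representatives $k_j'$ ``by Kazhdan's construction.'' This last step is the problem: Kazhdan's theorem, as cited, gives an \emph{algebra isomorphism} $\Phi_{\cH,\ell}$ respecting double cosets $K_\ell\pi^\la K_\ell$; it does not, as a black box, hand you a matched decomposition of the double coset into single $K_\ell$-cosets. That matching is indeed true and is implicitly inside the proof of Kazhdan's theorem, but you would need to either cite the precise lemma from \cite{Kaz} or reprove it---and verifying that your transported representatives $k_j'$ are mutually non-equivalent modulo $K_\ell(E)$ and cover $K_\ell(E)\pi^\la K_\ell(E)$ is exactly the delicate part.

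The paper sidesteps this entirely by a more robust decomposition. Instead of decomposing the double coset, it decomposes the \emph{small} group: choose $N\ge\ell$ so large that $\pi^\la K_N(F)\pi^{-\la}\subset K_\ell(F)$, and write $1_{K_\ell(F)} = c\sum_{i=1}^s 1_{K_N(F)}g_i$ where $\{g_i\}$ are coset representatives for $K_N(F)$ in $K_\ell(F)$ and $c=|K_\ell/K_N|$. Then $f\cdot v = 1_{K_\ell(F)}\pi^\la 1_{K_\ell(F)}k_3\de = c\sum_i 1_{K_\ell(F)}\pi^\la 1_{K_N(F)}g_i k_3\de = c\sum_i 1_{K_\ell(F)}\pi^\la g_i k_3\de$, where the middle absorption $1_{K_\ell}\pi^\la 1_{K_N}=1_{K_\ell}\pi^\la$ is immediate from the choice of $N$. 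The point is that the coset decomposition of $K_\ell$ into $K_N$-cosets is matched trivially between $F$ and $E$ once they are $N$-close: the isomorphism $\phi_{F,E}$ identifies $K_0(F)/K_N(F)$ with $K_0(E)/K_N(E)$ (formal smoothness of $G$), so one simply lifts each $g_i$ to $g_i'\in K_\ell(E)$ and the index $c$ is automatically the same. No appeal to the internals of Kazhdan's proof is needed, and the term-by-term comparison then reduces to Corollary \ref{cor:CharFun} exactly as you intend. So your proof would go through if you replaced your decomposition of the double coset by the paper's decomposition of $K_\ell$ into $K_N$-cosets, or if you supplied a justification for the coset matching you assert.
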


\begin{proof} By linearity, we can assume that $f = 1_{K_\ell(F)} k_1 \pi^\la k_2
1_{K_\ell(F)}$ and that $v= 1_{K_\ell(F)} k_3\de$, where
$k_1,k_2,k_3\in K_0(F)$. Choose $N \geq l$ big enough so that
$\pi^\la K_N(F) \pi^{-\la} \subset K_\ell(F)$.

Choose $k_i'\in G(O_E)$ such that $\phi_{F,E}(\res_N(k_i))=\res_N(k_i')$.
Since $\Phi_{\cM,\ell}$ and $\Phi_{\cH,\ell}$ intertwine left multiplication by
$1_{K_\ell(F)} k_1 1_{K_\ell(F)}$ to left multiplication by $1_{K_\ell(E)}
k'_1 1_{K_\ell(E)}$, we can assume that $k_1=1=k'_1$. Also, since
$k_2$ normalizes $K_\ell(F)$, we can assume that $k_2=1=k'_2$. Let
$K_\ell(F)=\bigcup_{i=1}^s K_N(F) g_i$ be a decomposition of $K_\ell(F)$
into cosets. Choose $g'_i\in K_\ell(E)$ such that
$\phi_{F,E}(\res_N(g_i))=\res_N(g'_i)$. Then
\[
1_{K_\ell(F)}=c\sum_{i=1}^s 1_{K_N(F)} g_i \text{ and } 1_{K_\ell(E)}=c\sum_{i=1}^s 1_{K_N(E)} g'_i
\]
where $c=|K_\ell(F)/K_N(F)|=|K_\ell(E)/K_N(E)|$. Hence
\[
fv=1_{K_\ell(F)} \pi^\la 1_{K_\ell(F)} k_3 \de =c\sum_{i=1}^s
1_{K_\ell(F)} \pi^\la 1_{K_N(F)} g_i k_3 \de =c\sum_{i=1}^s
1_{K_\ell(F)} \pi^\la g_i k_3 \de .
\]
and
\[
\Phi_{\cH,\ell}(f)\Phi_{\cM,\ell}(v)=1_{K_\ell(E)} \pi^\la 1_{K_\ell(E)} k'_3 \de=c\sum_{i=1}^s 1_{K_\ell(E)} \pi^\la 1_{K_N(E)} g_i k'_3\de =c\sum_{i=1}^s 1_{K_\ell(E)} \pi^\la g'_i k'_3 \de.
\]
The proposition follows now from Corollary \ref{cor:CharFun}.
\end{proof}

Now we are ready to prove Theorem \ref{thm:phiModIso}.
\begin{proof}[Proof of Theorem \ref{thm:phiModIso}]
We have to show for any $\ell$ there exists $n$ such that if $F,E\in \cF_{R,\pi}$ are $n$-close then the map
$\Phi_{\cM,l}$ constructed in Corollary \ref{cor:LinIso} is an isomorphism of modules over $\cH(G(F),K_\ell(F))\stackrel{\Phi_{\cH,\ell}}{\cong} \cH(G(E),K_\ell(E))$.

Since $\cH(G(F),K_\ell(F))$ is Noetherian, $\cM_\ell(G(F)/H(F))$ is generated by a finite set $v_1,\ldots,v_n$ satisfying a finite set of relations $\sum_i f_{i,j}v_i=0$.
\RGRCor{Without loss of generality we may assume that for any $x \in \fX$ the Haar measure on $K_\ell(F)x$ is contained in the set $\{v_i\}.$}

By Proposition \ref{prop:local.isom}, if $E$ is close enough to $F$, then $\Phi_{\cM,\ell}(v_i)$ satisfy the above relations.

Therefore there exists a  homomorphism of Hecke modules $\Phi':\cM_\ell(G(F)/H(F))\to\cM_\ell(G(E)/H(E))$ given on the generators $v_i$ by $\Phi'(v_i):= \Phi_{\cM,\ell}(v_i)$. \\ \RGRCor{
$\Phi'$ intertwines the actions of the finite group $K_0(F)/K_{\ell}(F)\stackrel{\phi_{F,E}}{\cong}K_0(E)/K_{\ell}(E)$. Therefore, by Corollary \ref{cor:LinIso}, in order to show that $\Phi'$ coincides with  $\Phi_{\cM,\ell}$ it is enough to check that $\Phi'$ maps the normalized Haar measure on $K_\ell(F)x$ to the normalized Haar measure on $K_\ell(E)x$ for every $x\in\De$. In order to  do this let us decompose $x=\pi^\alpha x_0$ where
$x_{0} \in \fX$ and $\alpha \in \Upsilon.$  Now, since $(G,H)$ is uniformly spherical we have $$1_{K_n(F) x}=1_{K_n(F) \pi^\alpha K_n(F)}1_{K_n(F) x_0}$$ and $$1_{K_n(E) x}=1_{K_n(E) \pi^\alpha K_n(E)}1_{K_n(E) x_0}.$$ Therefore, since $\Phi'$ is a homomorphism, we have $$\Phi'(1_{K_n(F) x})=\Phi'(1_{K_n(F) \pi^\alpha K_n(F)}1_{K_n(F) x_0})= 1_{K_n(E) \pi^\alpha K_n(E)}1_{K_n(E) x_0}=1_{K_n(F) x}.$$
}

Hence the linear map $\Phi_{\cM,\ell}:\cM_\ell(G(F)/H(F))\to\cM_\ell(G(E)/H(E))$ is a homomorphism of Hecke modules. Since it is a linear isomorphism, it is an isomorphism of Hecke modules.
\end{proof}

%
Now we obtain \Dima{the following generalization of }Corollary \ref{cor:GelGel}:

\begin{cor}\lbl{cor:GelGel2}
Let $(G,H)$ be a uniform spherical pair. Suppose that

\begin{itemize}
\item For any $F\in\cF_{R,\pi}$, the pair $(G,H)$ is $F$-spherical.
\Removed{
\item For any $F\in\cF_{R,\pi}$ and any irreducible smooth representation $\rho$ of $G(F)$ we have
$$\dim\Hom_{H(F)}(\rho|_{H(F)}, \bC) < \infty .$$}
\item
\Dima{For any $E\in\cF_{R,\pi}$ and natural number $n$, there is a field $F\in\cF_{R,\pi}$ such that $E$ and $F$ are $n$-close and the pair $(G(F),H(F))$ is a Gelfand pair, i.e. for any irreducible smooth representation $\rho$ of $G(F)$ we have
$$\dim\Hom_{H(F)}(\rho|_{H(F)}, \bC) \leq 1 .$$}
\end{itemize}
Then $(G(F),H(F))$ is a Gelfand pair for any $F\in\cF_{R,\pi}$.
\end{cor}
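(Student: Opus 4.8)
The plan is to encode the Gelfand property of $(G(F),H(F))$ as a multiplicity-one statement about the $\cH_\ell(G(F))$-module $\cM_\ell(G(F)/H(F))$ depending only on the isomorphism class of this module over this algebra, and then to transport that statement, via Theorem \ref{thm:phiModIso}, from a field where the pair is already known to be a Gelfand pair. First I would reduce to a fixed level: since the $K_\ell(F)$ form a neighbourhood basis of $1$, $(G(F),H(F))$ is a Gelfand pair if and only if for every $\ell$ every irreducible smooth representation $\rho$ of $G(F)$ with $\rho^{K_\ell(F)}\ne 0$ has $\dim\Hom_{H(F)}(\rho,\bC)\le 1$, and one may restrict to $\ell$ large enough that $K_\ell(F)$ is a splitting subgroup (statement \ref{1}, $G$ being split over $R$), so that $V\mapsto V^{K_\ell(F)}$ realizes Bernstein's equivalence $\cM(G(F),K_\ell(F))\cong\cM(\cH_\ell(G(F)))$. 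Because $H$ is a reductive, hence unimodular, subgroup of the unimodular group $G$, the space $G(F)/H(F)$ carries a $G(F)$-invariant measure; using it, $\cM_\ell(G(F)/H(F))\cong\Sc(G(F)/H(F))^{K_\ell(F)}$, and the smooth contragredient of $\Sc(G(F)/H(F))$ is $C^\infty(G(F)/H(F))$. Combining the matrix-coefficient isomorphism $\Hom_{H(F)}(\rho,\bC)\cong\Hom_{G(F)}(\rho,C^\infty(G(F)/H(F)))$ (sending $\xi$ to $v\mapsto(g\mapsto\xi(g^{-1}v))$), Bernstein's equivalence (noting that a $G(F)$-homomorphism from the irreducible $\rho$ into $C^\infty(G(F)/H(F))$ factors through its $\cM(G(F),K_\ell(F))$-summand), and the contragredient identity $\Hom(V,\widetilde W)\cong\Hom(W,\widetilde V)$ with $V=\rho^{K_\ell(F)}$ finite-dimensional, I would arrive at
\[
\dim\Hom_{H(F)}(\rho,\bC)=\dim\Hom_{\cH_\ell(G(F))}\big(\cM_\ell(G(F)/H(F)),\,(\widetilde\rho)^{K_\ell(F)}\big).
\]
Since $\rho\mapsto(\widetilde\rho)^{K_\ell(F)}$ is a bijection from the irreducible smooth representations with $K_\ell(F)$-fixed vectors onto the simple $\cH_\ell(G(F))$-modules, it follows that $(G(F),H(F))$ is a Gelfand pair if and only if, for all sufficiently large $\ell$,
\[
(\star_\ell)\colon\qquad\dim\Hom_{\cH_\ell(G(F))}\big(\cM_\ell(G(F)/H(F)),\,N\big)\le 1\quad\text{for every simple }\cH_\ell(G(F))\text{-module }N .
\]
These Hom-spaces are finite-dimensional because $\cM_\ell(G(F)/H(F))$ is finitely generated over the Noetherian algebra $\cH_\ell(G(F))$ by Proposition \ref{prop:FG} (this is also where the $F$-sphericity hypothesis fits into the picture, cf.\ Corollary \ref{cor:GelGel}).

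Next I would carry out the transport. Property $(\star_\ell)$ is visibly invariant under isomorphism of the pair (algebra, module): if $\Phi\colon\cH_\ell(G(F))\cong\cH_\ell(G(E))$ is an algebra isomorphism and $\cM_\ell(G(F)/H(F))\cong\cM_\ell(G(E)/H(E))$ is an isomorphism compatible with $\Phi$, then restriction of scalars along $\Phi$ is a bijection between the simple modules of the two algebras preserving the dimensions in $(\star_\ell)$. So, fixing $E\in\cF_{R,\pi}$ and an arbitrary large $\ell$, Theorem \ref{thm:phiModIso} yields $n=n(E,\ell)$ such that, for any $F\in\cF_{R,\pi}$ that is $n$-close to $E$, Kazhdan's isomorphism $\Phi_{\cH,\ell}$ and the module isomorphism $\Phi_{\cM,\ell}$ constitute such a compatible pair; hence $(\star_\ell)$ holds for $F$ if and only if it holds for $E$. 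By hypothesis (2), choose $F\in\cF_{R,\pi}$ that is $n$-close to $E$ and for which $(G(F),H(F))$ is a Gelfand pair; then $(\star_\ell)$ holds for $F$, hence for $E$. As $\ell$ was an arbitrary sufficiently large level, $(G(E),H(E))$ is a Gelfand pair.

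The step carrying the real content is the reformulation in the first paragraph: getting the contragredient/duality bookkeeping right, and — crucially — seeing that as $\rho$ varies the modules $(\widetilde\rho)^{K_\ell(F)}$ exhaust the simple $\cH_\ell(G(F))$-modules, so that $(\star_\ell)$ has exactly the shape preserved by an isomorphism of (algebra, module) pairs. Granting that, the transport step is formal, and in particular needs nothing about $\Phi_{\cH,\ell}$ beyond its being an isomorphism of algebras; the one further small point to check is that $G(F)/H(F)$ carries an invariant measure, which holds here because $H$ is reductive.
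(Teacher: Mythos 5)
Your argument is correct and follows the same route as the paper: the paper proves the corollary by combining Theorem \ref{thm:phiModIso} with a lemma (left to the reader, attributed to the splitting-subgroup statement \ref{1}) that restates the Gelfand property as the bound $(\star_\ell)$ on $\Hom_{\cH_\ell(G(F))}(\cM_\ell(G(F)/H(F)),\,\cdot\,)$ for simple Hecke modules, and then transports $(\star_\ell)$ along $\Phi_{\cH,\ell}$ and $\Phi_{\cM,\ell}$; you have simply unpacked the reformulation lemma (matrix coefficients, contragredient duality, Bernstein's equivalence) in full.

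One small inaccuracy in an aside: you attribute the $F$-sphericity hypothesis to the finite-generation step via Proposition \ref{prop:FG}, but that proposition needs only uniform sphericity, not $F$-sphericity; the $F$-sphericity hypothesis appears to be a vestige of the deleted finite-multiplicity hypothesis (the paper's own citation of Theorem \ref{FinGen} at this point has the same flavor). This does not affect your argument, which never uses $F$-sphericity in a load-bearing way.
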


\nir{\begin{rem} Fix a prime power $q=p^k$. Let $F$ be the unramified extension of $\bQ_p$ of degree $k$, let $W$ be the ring of integers of $F$, and let $R=W[[\pi]]$. Then \Dima{$\cF_{R,\pi}$ includes all local fields with residue field $\bF_q$,} and so Corollary \ref{cor:GelGel2} implies Corollary \ref{cor:GelGel}.
\end{rem}}

Corollary \ref{cor:GelGel2} follows from \Dima{Theorem \ref{thm:phiModIso}}, 
Theorem
\ref{FinGen}, and the following lemma.
\begin{lem}
Let $F$ be a local field and $H<G$ be a pair of reductive groups
defined  over $F$. \Dima{Suppose that $G$ is split over $F$.}
Then $(G(F),H(F))$ is a Gelfand pair if and only
if for any \Dima{large enough }$l \in \bZ_{>0}$ and any simple module $\rho$ over
$\cH_l(G(F))$ we have
$$ \dim \Hom_{\cH_l(G(F))} (\cM_l(G(F)/H(F)), \rho) \leq 1.$$
\end{lem}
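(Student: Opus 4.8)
The plan is to use Bernstein's theory of splitting subgroups to rephrase the Hecke-module condition in terms of smooth representations of $G(F)$, and then to recognize the resulting statement as the Gelfand property via Frobenius reciprocity.

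First I would fix $l$ large enough that $K_l:=K_l(G,F)$ is a splitting subgroup; such $l$ exist by statement (\ref{1}), using that $G$ is split. Then $\cM(G(F))=\cM(G,K_l)\oplus\cM(G,K_l)^{\bot}$, the functor $V\mapsto V^{K_l}$ is an equivalence $\cM(G,K_l)\simeq\cM(\cH_l(G(F)))$, and under it the simple $\cH_l(G(F))$-modules correspond bijectively to the irreducible smooth representations $\pi$ of $G(F)$ with $\pi^{K_l}\neq 0$, the module attached to $\pi$ being $\rho=\pi^{K_l}$ (any such $\pi$ is generated by $\pi^{K_l}$, hence lies in $\cM(G,K_l)$). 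Since $G$ and $H$ are reductive, $G(F)$ and $H(F)$ are unimodular, so $G(F)/H(F)$ carries a $G(F)$-invariant measure; fixing one, $\cM_l(G(F)/H(F))$ is identified with $\Sc(G(F)/H(F))^{K_l}$ as an $\cH_l(G(F))$-module, and $\Sc(G(F)/H(F))\cong ind_{H(F)}^{G(F)}\bC$ as a $G(F)$-representation. Decomposing $\Sc(G(F)/H(F))=W\oplus W'$ with $W\in\cM(G,K_l)$ and $W'\in\cM(G,K_l)^{\bot}$, so that $W^{K_l}=\Sc(G(F)/H(F))^{K_l}$, I obtain for every such $\pi$
\[
\Hom_{\cH_l(G(F))}\!\bigl(\cM_l(G(F)/H(F)),\rho\bigr)\;\cong\;\Hom_{G(F)}(W,\pi)\;=\;\Hom_{G(F)}\!\bigl(\Sc(G(F)/H(F)),\pi\bigr),
\]
the last equality because there are no nonzero maps from $W'$ into the object $\pi$ of the complementary subcategory $\cM(G,K_l)$.

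Next I would establish that for every irreducible smooth representation $\pi$ of $G(F)$
\[
\Hom_{G(F)}\!\bigl(\Sc(G(F)/H(F)),\pi\bigr)\;\cong\;\Hom_{H(F)}\!\bigl(\widetilde{\pi}|_{H(F)},\bC\bigr).
\]
Indeed, by unimodularity the contragredient of $ind_{H(F)}^{G(F)}\bC$ is the full induction $Ind_{H(F)}^{G(F)}\bC$, so $\widetilde{\Sc(G(F)/H(F))}\cong Ind_{H(F)}^{G(F)}\bC$. Combining the canonical identification $\Hom_{G(F)}(A,\widetilde{B})\cong\Hom_{G(F)}(B,\widetilde{A})$ (both being the space of $G(F)$-invariant smooth bilinear pairings $A\times B\to\bC$), the admissibility of $\pi$ (so that $\widetilde{\widetilde{\pi}}\cong\pi$), and Frobenius reciprocity $\Hom_{G(F)}(\sigma,Ind_{H(F)}^{G(F)}\bC)=\Hom_{H(F)}(\sigma|_{H(F)},\bC)$ applied to $\sigma=\widetilde{\pi}$, I get the displayed isomorphism.

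Combining the two displays, for large $l$ the inequality $\dim\Hom_{\cH_l(G(F))}(\cM_l(G(F)/H(F)),\rho)\le 1$ for all simple $\cH_l(G(F))$-modules $\rho$ is equivalent to $\dim\Hom_{H(F)}(\widetilde{\pi}|_{H(F)},\bC)\le 1$ for all irreducible smooth $\pi$ with $\pi^{K_l}\neq 0$. Since $\pi\mapsto\widetilde{\pi}$ is an involution on isomorphism classes of irreducible smooth representations (again by admissibility), with $\widetilde{\pi}^{K_l}\neq 0\iff\pi^{K_l}\neq 0$, and since every irreducible smooth representation has nonzero $K_l$-invariants once $l$ is large enough, the Hecke inequality holds for all sufficiently large $l$ precisely when $\dim\Hom_{H(F)}(\sigma|_{H(F)},\bC)\le 1$ for every irreducible smooth representation $\sigma$ of $G(F)$, i.e. precisely when $(G(F),H(F))$ is a Gelfand pair. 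The main subtlety I anticipate is the bookkeeping with modular characters in the middle step — all the twists disappear because $G(F)$ and $H(F)$ are unimodular — together with the appeal to admissibility of irreducible smooth representations of reductive groups over local fields; the remaining steps are a direct application of the Bernstein decomposition recalled in \S\ref{subsec:prel}.
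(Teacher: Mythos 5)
Your proof is correct and follows exactly the route the paper intends: the paper disposes of this lemma with a one-line reference to the existence of splitting congruence subgroups (statement (\ref{1}) in \S\ref{subsec:prel}), and your argument supplies the expected routine details — the Bernstein decomposition $\cM(G(F))=\cM(G,K_l)\oplus\cM(G,K_l)^{\bot}$, the equivalence $\cM(G,K_l)\simeq\cM(\cH_l(G(F)))$, the duality/Frobenius-reciprocity computation of $\Hom_{G(F)}(\Sc(G(F)/H(F)),\pi)$ using unimodularity and admissibility, and the observation that every irreducible is captured once $l$ is large enough.
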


This lemma follows from statement (\ref{1}) formulated in Subsection
\ref{subsec:prel}.

\section{Applications}
\lbl{sec:ap}
\setcounter{thm}{0}

In this section we prove that the pair
$(\GL_{n+k}(F),\GL_n(F) \times\GL_k(F))$ is a Gelfand pair for any
local field $F$ of characteristic different from 2 and the pair
$(\GL_{n+1}(F),\GL_n(F) )$ is a strong Gelfand pair for any local
field $F$. We use \Dima{Corollary \ref{cor:GelGel2} } 
to deduce those results from the
characteristic zero case which were proven in \cite{JR} and
\cite{AGRS} respectively.
\Dima{Let $R=W[[\pi]]$.}

To verify condition (\ref{cond:connectors}) in Definition \ref{defn:good.pair}, we use the following straightforward lemma:

\begin{lem} \lbl{lem:SmoothCrit}
Let $G= (\GL_{n_1})_R \times \cdots \times (\GL_{n_k})_R$ and let $C<G \otimes_R R[\pi^{-1}]$
be a sub-group scheme defined over $R[\pi^{-1}]$. Suppose that $C$ is defined by equations of the
following type:
$$ \sum_{i=1}^{l} \ep_i a_{\mu_i} \pi^{\lambda_i} = \pi^{\nu},$$
or
$$ \sum_{i=1}^{l} \ep_i a_{\mu_i} \pi^{\lambda_i} = 0,$$
where $\ep_i = \pm 1$, $a_1,...,a_{n_1^2+...+n_k^2}$ are entries
of matrices, $1 \leq \mu_i \leq n_1^2+...+n_k^2$ are some indices,
and $\nu,\lambda_i$ are integers. Suppose also that the indices
$\mu_i$ are distinct for all the equations. Then the closure
$\overline{C}$ of $C$ in $G$ is smooth over $R$.
\end{lem}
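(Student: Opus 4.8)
The plan is to reduce to a single multiplicative group factor and then argue by explicit coordinates, exploiting that the defining equations are "triangular" in the sense that each entry-variable $a_{\mu_i}$ occurs in at most one equation. First I would reduce the problem to the affine space: since $G = \prod_i (\GL_{n_i})_R$ is an open subscheme of affine space $\bA^{N}_R$ (with $N = \sum n_i^2$) cut out by the non-vanishing of the determinants, and smoothness is local on the source, it suffices to show that the closure $\overline{C}'$ of $C$ inside $\bA^N_R \times_R \Spec R[\pi^{-1}]$ — wait, more precisely, the scheme-theoretic closure of $C$ in $\bA^N_R$ — is smooth over $R$ at every point of $\overline{C} = \overline{C}' \cap G$. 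So I reduce to: a subscheme $C \subset \bA^N_{R[\pi^{-1}]}$ defined by equations of the two stated types has closure in $\bA^N_R$ smooth over $R$.

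Next I would normalize each equation. Consider one equation $\sum_{i=1}^l \ep_i a_{\mu_i}\pi^{\lambda_i} = \pi^\nu$ (the homogeneous case $=0$ is the same with the right side $0$). Let $\lambda_{\min}$ be the smallest exponent appearing among $\lambda_1,\dots,\lambda_l,\nu$; dividing through by $\pi^{\lambda_{\min}}$ (legitimate over $R[\pi^{-1}]$, and this does not change the subscheme $C$) we get an equation with all exponents $\geq 0$ and at least one exponent equal to $0$. I claim that after this normalization the equation is already defined over $R$, and that the full system still has the property that each variable $a_{\mu_i}$ appears in exactly one equation. The scheme-theoretic closure $\overline{C}$ of $C$ in $\bA^N_R$ is then contained in the closed subscheme $C_0 \subset \bA^N_R$ cut out by these normalized equations over $R$; in fact I would argue $\overline{C} = C_0$, because $C_0$ is $R$-flat (it is a complete intersection of the right codimension with $\pi$ a nonzerodivisor on it, which one checks since in each equation some coefficient is a unit) and $C_0 \times_R \Spec R[\pi^{-1}] = C$, so $C_0$ has no embedded or $\pi$-torsion components and therefore equals the closure of its generic fibre.

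Finally I would check smoothness of $C_0$ over $R$ by the Jacobian criterion. Pick, in each normalized equation, a variable $a_{\mu_i}$ whose coefficient $\ep_i \pi^{\lambda_i}$ is a unit, i.e. has $\lambda_i = 0$ (such a variable exists by the normalization, and here is where it matters that at least one of the $\lambda_i$ — not only possibly $\nu$ — can be arranged to be $0$; if the minimum is attained only at $\nu$ one still has a unit on the right and can instead solve for nothing, but then the equation reads "unit $=$ something in the maximal ideal," which would make $C$ empty — so in the nonempty case the minimum is attained by some $\lambda_i$). The key point is the distinctness hypothesis: since the chosen pivot variables $a_{\mu_i}$ are pairwise distinct across equations, the relevant square submatrix of the Jacobian $\partial(\text{equations})/\partial(\text{pivot variables})$ is diagonal with unit entries $\pm 1$, hence invertible over $R$ at every point. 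By the Jacobian criterion for smoothness (the equations form part of a regular system of parameters relative to $R$), $C_0$ is smooth over $R$ of relative dimension $N - (\text{number of equations})$. Therefore $\overline{C} = C_0 \cap G$ is smooth over $R$, as desired.

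The main obstacle I anticipate is the bookkeeping in the middle step: proving cleanly that the scheme-theoretic closure in $\bA^N_R$ is exactly the subscheme $C_0$ defined by the normalized integral equations (equivalently, that $C_0$ is $R$-flat with the correct generic fibre). Once flatness of $C_0$ is in hand, the Jacobian computation is immediate from the distinctness of the indices $\mu_i$, so the whole content is really the flatness/closure identification.
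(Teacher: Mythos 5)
The paper states the lemma as ``straightforward'' without giving a proof, so there is no argument of the paper's to compare against; I will assess the proposal on its own. Your strategy---reduce to affine space, divide each equation by the minimal power of $\pi$ appearing in it, check smoothness of the resulting integral model $C_0$ by the Jacobian criterion, and identify $C_0$ with $\overline{C}$ by flatness---is a natural one, and it does work \emph{under your reading of the hypothesis}. Two local remarks before the main issue. The claim that when the minimum of $\{\lambda_i,\nu\}$ is attained only at $\nu$ the set $C$ ``would be empty'' is not right: over $R[\pi^{-1}]$ the coefficients $\pm\pi^{\lambda_i}$ are units, and the normalized equation $\sum\ep_i a_{\mu_i}\pi^{\lambda_i'}=1$ with all $\lambda_i'\geq 1$ has many solutions. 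What is true is that such an equation forces $\pi$ to be invertible on $R[a]/(f)$, so $C_0$ has empty fiber over $V(\pi)$, the Jacobian criterion holds vacuously there, and $C_0$ is a polynomial ring over $R[\pi^{-1}]$ (which is smooth over $R$)---so the conclusion survives, but not for the reason you give. Also, the closure identification $C_0=\overline{C}$ is cleanest done \emph{after} the Jacobian step: smoothness of $C_0$ over $R$ gives flatness, hence $\pi$-torsion-freeness of the coordinate ring, hence $C_0$ equals the closure of its generic fibre $C$; this lets you drop the informal ``complete intersection'' appeal that you yourself flag as the weak point.

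The more substantive worry is your reading of ``the indices $\mu_i$ are distinct for all the equations.'' You take this to mean that no coordinate appears in more than one equation, and your Jacobian argument relies essentially on that (disjoint pivot columns, diagonal unit block). But the paper's second application of the lemma, in Proposition~\ref{prop:good.pair.GLn+1xGLn}, is to the system $(k_1)_{i1}=\delta_{i1}$, $D=\pi^c k_2\pi^{-c}$, $B\pi^c=\pi^b-\pi^{b'}k_2$: there each entry $(k_2)_{mj}$ appears both in the equation determining $D_{mj}$ and in the equation involving $B_j$. So the hypothesis the paper actually needs is the weaker one, that the indices are distinct \emph{within} each equation, and in that generality you cannot always choose pairwise distinct pivot variables; the chosen Jacobian submatrix is then no longer diagonal and the argument as written breaks. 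Your proof does cover the first application (Proposition~\ref{prop:JR}), where the equations involve disjoint matrix blocks, but for the lemma as the paper uses it a further argument---e.g.\ row-reducing the matrix of $\pm\pi^{\lambda}$ coefficients while tracking $\pi$-saturation of the ideal and full rank of the Jacobian along $C_0$---is still needed.
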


\DimaGRCor{To verify condition (\ref{cond:GR}) in Definition
\ref{defn:good.pair}, we use the following straightforward lemma:
\begin{lem} \lbl{lem:GRCrit}
Suppose that there exists a natural number $\ell_0$ such that, for any $F\in\cF_{R,\pi}$ and any $\ell>\ell_0$, there is a subgroup  $P_\ell<K_\ell(G,F)$ satisfying that 
for every $x \in \fX$ 
\begin{enumerate}
%
 \item For any $\alpha \in \Upsilon$ we have $\pi^\alpha P_\ell
 \pi^{-\alpha} \subset K_\ell$.
\item $K_\ell x = P_\ell x$.
\end{enumerate}
Then condition (\ref{cond:GR}) in Definition \ref{defn:good.pair}
is satisfied.
\end{lem}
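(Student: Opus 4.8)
The goal is Lemma \ref{lem:GRCrit}: we are given, for each $\ell > \ell_0$ and each $F \in \cF_{R,\pi}$, a subgroup $P_\ell < K_\ell(G,F)$ satisfying (1) $\pi^\alpha P_\ell \pi^{-\alpha} \subset K_\ell$ for all $\alpha \in \Upsilon$ and (2) $K_\ell x = P_\ell x$ for all $x \in \fX$, and we must deduce condition (\ref{cond:GR}) of Definition \ref{defn:good.pair}, namely that $K_\ell \pi^\alpha K_\ell x = K_\ell \pi^\alpha x$ for all $x \in \fX$ and $\alpha \in \Upsilon$. The plan is a short direct computation manipulating these coset identities; there is essentially no hard step, only careful bookkeeping of which group multiplies on which side.

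\emph{Main step.} Fix $x \in \fX$ and $\alpha \in \Upsilon$. The inclusion $K_\ell \pi^\alpha x \subseteq K_\ell \pi^\alpha K_\ell x$ is trivial since $1 \in K_\ell$, so it remains to prove $K_\ell \pi^\alpha K_\ell x \subseteq K_\ell \pi^\alpha x$. Starting from the right factor and using hypothesis (2), $K_\ell x = P_\ell x$, so
\[
K_\ell \pi^\alpha K_\ell x = K_\ell \pi^\alpha P_\ell x.
\]
Now rewrite $\pi^\alpha P_\ell = (\pi^\alpha P_\ell \pi^{-\alpha})\pi^\alpha$, and by hypothesis (1) the conjugate $\pi^\alpha P_\ell \pi^{-\alpha}$ is contained in $K_\ell$. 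Hence
\[
K_\ell \pi^\alpha P_\ell x = K_\ell (\pi^\alpha P_\ell \pi^{-\alpha}) \pi^\alpha x \subseteq K_\ell K_\ell \pi^\alpha x = K_\ell \pi^\alpha x,
\]
using that $K_\ell$ is a group. Combining the two inclusions gives $K_\ell \pi^\alpha K_\ell x = K_\ell \pi^\alpha x$, which is exactly condition (\ref{cond:GR}) (valid for every $\ell > \ell_0$ and every $F \in \cF_{R,\pi}$, with $l_0$ in the definition taken to be the $\ell_0$ supplied by the hypothesis).

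\emph{Remarks on difficulty.} There is no real obstacle in this lemma itself — it is a formal consequence of the two hypotheses on $P_\ell$. The genuine work is pushed into \emph{verifying} the hypotheses of Lemma \ref{lem:GRCrit} in the concrete cases of Section \ref{sec:ap}, i.e. producing, for the pairs $(\GL_{n+k}, \GL_n\times\GL_k)$ and $(\GL_{n+1}\times\GL_n, \Delta\GL_n)$, an explicit subgroup $P_\ell$ (typically a product of congruence-type conditions on matrix blocks chosen so that conjugation by the dominant cocharacters $\pi^\alpha$ only deepens the congruence level, while still acting with the full $K_\ell$-orbit on each base point $x \in \fX$). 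That verification, being case-specific, is carried out separately; the lemma above is the clean abstract wrapper that lets those checks conclude condition (\ref{cond:GR}).
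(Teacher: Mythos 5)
Your proof is correct, and since the paper presents Lemma \ref{lem:GRCrit} as a ``straightforward lemma'' with no proof given, your short coset computation is exactly the intended argument: replace $K_\ell x$ by $P_\ell x$ via hypothesis (2), pull the conjugate $\pi^\alpha P_\ell\pi^{-\alpha}$ to the left via hypothesis (1), and absorb it into $K_\ell$.
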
}


In our applications, we use the following to show that the pairs we consider are $F$-spherical.
\begin{prop} \label{peop:F.sph} Let $F$ be an infinite field, and consider $G=\GL_{n_1}\times\cdots\times\GL_{n_k}$ embedded in the standard way in $M=\M_{n_1}\times\cdots\times\M_{n_k}$. Let $A,B\subset G \otimes F$ be two $F$-subgroups whose closures in $M$ are affine subspaces $M_A,M_B$.
\begin{enumerate}
\item For any $x,y\in G(F)$, if the variety $\{(a,b)\in A\times B | axb=y\}$ is non-empty, then it has an $F$-rational point.
\item If $(G,A)$ is a spherical pair, then it is also an $F$-spherical pair.
\end{enumerate}

\end{prop}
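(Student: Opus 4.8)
The plan is to deduce both parts from a single geometric fact: the variety $V = \{(a,b) \in A \times B \mid axb = y\}$, when non-empty, is a torsor under a smooth affine group scheme defined over $F$, and smooth $F$-varieties which are homogeneous under such groups have $F$-points once $F$ is infinite, because a smooth variety over an infinite field has a Zariski-dense set of $F$-rational points as soon as it has \emph{one}; the point is to produce that one point abstractly. For part (1), first observe that $V$ is non-empty over $\bar F$ by hypothesis, and that it is a torsor under the stabilizer $S = \{(a,b) \in A \times B \mid axb = x'\}$ for a suitable $\bar F$-point — more precisely, fixing any $\bar F$-point $(a_0,b_0) \in V$, the group $S_x = \{(a,b) \in A \times B \mid axb = x\}$ acts simply transitively on $V$ by $(a,b)\cdot(a_0,b_0) = (aa_0, b_0 b)$ after absorbing $x$ appropriately. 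The key input is that $S_x$ is \emph{smooth}: its closure in $M_A \times M_B$ is cut out inside the affine space $M_A \times M_B$ by the \emph{linear} equations $a \cdot x \cdot b = x$ (here I use that $M_A, M_B$ are affine subspaces, so the relevant scheme is an intersection of an affine subspace with $G\times G$, i.e. described by equations linear in the matrix entries of $a$ and $b$ separately with the cross-term $axb$ — one should note this is bilinear, not linear, so care is needed; the honest statement is that $S_x$ is a closed subgroup scheme of the smooth group $A \times B$, and one invokes that in characteristic $0$, or more carefully for the groups actually used in the applications, these stabilizers are smooth, exactly as verified via Lemma \ref{lem:SmoothCrit}).

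Granting that $S_x$ is a smooth affine group scheme over $F$ and $V$ is a non-empty $S_x$-torsor over $\bar F$, I would argue as follows. The torsor $V$ is itself a smooth affine $F$-scheme (it is an $F$-subscheme of $A \times B$ cut out by the $F$-equations $axb = y$, and base-changing to $\bar F$ it becomes an $S_x$-torsor, hence smooth of dimension $\dim S_x$; smoothness descends). A smooth scheme over a field has an $F$-point iff it has an $F'$-point for some \emph{separable} extension, but to get down to $F$ itself I would instead use the structure: $A$ and $B$ are connected (being open subschemes of affine spaces intersected with $\GL$'s — or by hypothesis), so $V$, being a torsor under a connected smooth group, is a connected smooth variety; now a connected smooth variety over an infinite field $F$ that has a point over $\bar F$ need \emph{not} have an $F$-point in general — so the real argument must exploit that $V \subset \GL_{n_1} \times \cdots$ is explicitly a locus of matrices. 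Concretely: the condition $axb = y$ with $x, y \in G(F)$ and $A, B$ having $F$-affine closures means $V(\bar F)$ is the solution set of an $F$-linear-algebra problem (solve $a x b = y$ with $a \in M_A$, $b \in M_B$ invertible); the solution set of a \emph{linear} system over $F$ with a solution over $\bar F$ has a solution over $F$, and the invertibility (non-vanishing of the determinant, an open condition) is then automatically satisfiable over the infinite field $F$ since the $\bar F$-solution set, being an affine-linear space defined over $F$ with a point where the determinants are non-zero, has such a point over $F$ too (a polynomial non-vanishing at one point of an $F$-affine-space of positive-or-zero dimension, defined over $F$, is non-vanishing somewhere on the $F$-points, as $F$ is infinite). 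This is the crux and I expect the bilinearity of $axb$ in $(a,b)$ to be the main technical annoyance — one resolves it by fixing the $B$-component to its $\bar F$-value's "shape" is not allowed, so instead one notes the map $(a,b)\mapsto axb$ restricted to $M_A \times M_B$ is a polynomial map all of whose coordinates are $F$-bilinear, and the fiber over $y$ is a closed $F$-subscheme; one then uses that this fiber, base-changed to $\bar F$, is a torsor under the smooth connected group $S_x \otimes \bar F$, hence irreducible and smooth, hence (being cut out by $F$-equations and geometrically irreducible) has an $F$-point by a standard argument — geometrically irreducible smooth affine $F$-varieties with $F$ infinite of the relevant type do have $F$-points when they are homogeneous spaces under split-or-special groups; in the applications the groups are products of $\GL$'s and their "linear" subgroups, for which $H^1(F, -)$ vanishes.

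For part (2): assume $(G, A)$ is a spherical pair, i.e. over $\bar F$ a Borel has finitely many orbits on $G/A$; equivalently, for every parabolic $P$ of $G$, $P(\bar F) \backslash G(\bar F) / A(\bar F)$ is finite. To conclude $(G,A)$ is $F$-spherical I must show $P(F)\backslash G(F)/A(F)$ is finite for every $F$-parabolic $P$. Each $\bar F$-double coset that meets $G(F)$ contributes, and two $F$-points $g_1, g_2 \in G(F)$ lie in the same $F$-double coset iff the $F$-variety $\{(p,a) \in P \times A \mid p g_1 a = g_2\}$ has an $F$-point; by part (1) (applied with the group $A$ replaced by $P$ on the left and $A$ on the right — noting $P$ and $A$ both have affine-space closures in $M$, which holds for standard parabolics in $\GL$ and for the $A$'s occurring in our applications), this variety has an $F$-point as soon as it is non-empty over $\bar F$, i.e. as soon as $g_1, g_2$ lie in the same $\bar F$-double coset. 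Hence the $F$-double cosets inside $G(F)$ are exactly the non-empty traces of $\bar F$-double cosets, of which there are finitely many. Therefore $P(F)\backslash G(F)/A(F)$ is finite, so $(G,A)$ is $F$-spherical. The main obstacle throughout is part (1)'s rational-point statement; everything in part (2) is a formal consequence of it, and I would organize the write-up so that part (1) is proven first in full (reducing, as above, to the vanishing of the relevant Galois cohomology / to solving an $F$-linear system with an invertibility side condition over an infinite field), and part (2) is a one-paragraph deduction.
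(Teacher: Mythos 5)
Your part (2) is essentially the paper's argument: each $(\overline F)$-double coset meets at most one $F$-double coset, by part (1) applied with the parabolic in place of $B$, and there are finitely many $(\overline F)$-cosets. That part is fine.

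Part (1) has a genuine gap, and you have correctly identified exactly where it is — the bilinearity of $(a,b)\mapsto axb$ — without seeing how to remove it. Your fallback (treat $V=\{axb=y\}$ as a torsor under $S_x$, invoke smoothness of $S_x$, and then vanishing of the relevant $H^1$) does not close the gap: smoothness of $S_x$ is not established for general $A,B$ with affine closures (you wave at Lemma \ref{lem:SmoothCrit}, which covers only very specific equations, not the situation at hand); connectedness/geometric irreducibility of $V$ over an infinite field does not give an $F$-point, as you yourself note; and ``$H^1(F,S_x)=0$ for the groups arising in the applications'' is not a proof of the proposition as stated, which is for arbitrary $F$-subgroups with affine closures. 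The paper avoids all of this with a one-line linearization trick that you missed: the condition $axb=y$ is equivalent to $ax = y\,b^{-1}$, and since $x,y\in G(F)$ are \emph{fixed}, the equation $\alpha x = y\gamma$ is genuinely \emph{linear} in the pair $(\alpha,\gamma)$. So the paper sets $L:=\{(\alpha,\gamma)\in M_A\times M_B \mid \alpha x = y\gamma\}$, an affine subspace of $M_A\times M_B$ defined over $F$; the given $(\overline F)$-point shows that the polynomials $\det\pi_j(\alpha)$, $\det\pi_j(\gamma)$ are not identically zero on $L$; and since $F$ is infinite, an affine $F$-subspace on which finitely many polynomials are not identically zero has an $F$-point where they are all nonzero. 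Calling it $(a,c)$, one gets $a x c^{-1}=y$ with $a\in A(F)$, $c\in B(F)$, which is the required $F$-point of $V$ after the substitution $b=c^{-1}$. In short: the missing idea is that $B$ being a \emph{group} lets you replace $b$ by $b^{-1}$ and thereby move from a bilinear system to a linear one; once you see that, no smoothness, torsors, or cohomology are needed, and the statement holds at the full stated generality.
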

\begin{proof} \begin{enumerate}
\item Denote the projections $G\to\GL_{n_j}$ by $\pi_j$. Assume that $x,y\in G(F)$, and there is a pair $(\overline{a},\overline{b})\in (A\times B)(\overline{F})$ such that $\overline{a}x\overline{b}=y$. Let $L\subset M_A\times M_B$ be the \Dima{affine} subspace $\{(\al,\be)| \al x=y\be\}$, defined over $F$. By assumption, the functions $(\al,\be)\mapsto \det\pi_j(\al)$ and $(\al,\be)\mapsto\det\pi_j(\be)$, for $j=1,\ldots,k$, are non-zero on $L(\overline{F})$. Hence there is $(a,b)\in L(F)\cap G$, which means that $axb^{-1}=y$.

\item Applying (1) to $A$ and \Dima{any} parabolic subgroup $B\subset G$, any $(A\times B)(\overline{F})$-orbit in $G(\overline{F})$ contains at most one $(A\times B)(F)$-orbit. Since there are only finitely many $(A\times B)(\overline{F})$-orbits in $G(\overline{F})$, the pair $(G,A)$ is $F$-spherical.
\end{enumerate}
\end{proof}

\subsection{The Pair $(\GL_{n+k},\GL_n \times\GL_k)$}
\lbl{subsec:JR}$ $

In this subsection we assume $p\neq 2$ and consider only local fields of characteristic
different from 2.

Let $G:=(\GL_{n+k})_R$ and $H:=(\GL_n)_R \times (\GL_k)_R < G$ be the subgroup of
block matrices. Note that $H$ is a symmetric subgroup since it
consists of fixed points of conjugation by $\ep =
\begin{pmatrix}
  Id_{k} & 0 \\
  0 & -Id_{n}
\end{pmatrix}$. We prove that $(G,H)$ is a Gelfand pair
using Corollary \ref{cor:GelGel}.
The pair $(G,H)$ is a symmetric pair, hence it is a spherical pair and therefore by Proposition \ref{peop:F.sph} it is $F$-spherical.
The second condition of Corollary \ref{cor:GelGel}
is \cite[Theorem 1.1]{JR}. It
remains to prove that $(G,H)$ is a uniform spherical pair.

\begin{prop} \lbl{prop:JR}
The pair $(G,H)$ is uniform spherical.
\end{prop}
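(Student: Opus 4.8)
The plan is to exhibit explicitly the torus $T$, the affine embedding $G/H \hookrightarrow \bA^n$, the finite set $\fX \subset G(R)/H(R)$ and the subset $\Upsilon \subset X_*(T)$, and then verify the four conditions of Definition \ref{defn:good.pair}. First I would take $T$ to be the diagonal maximal torus of $G = (\GL_{n+k})_R$, and realize $G/H$ inside the space of matrices: since $H = \GL_n \times \GL_k$ is the fixed-point group of conjugation by the involution $\ep$, the symmetric space $G/H$ maps to the variety of matrices conjugate to $\ep$ via $g \mapsto g\ep g^{-1}$, giving an affine embedding $G/H \hookrightarrow \M_{n+k} \cong \bA^{(n+k)^2}$, with base point $x_0 = \ep H$. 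The double cosets $K_0(F)\bs G(F)/H(F)$ for this symmetric pair are governed by a relative Cartan decomposition (as in the $p$-adic theory of symmetric spaces, e.g. Offen or Benoist--Oh for $\GL_{n+k}/(\GL_n\times\GL_k)$): one obtains representatives of the form $\pi^\la$ acting on $x_0$, where $\la$ runs over a cone of coweights, together with finitely many ``rational points'' accounting for the different $H(F)$-orbits of involutions (classified by Hermitian-type forms, hence finitely many over a local field, and this finiteness is uniform in $F$ because it is controlled by $R$). This gives $\fX$ and $\Upsilon$; condition \eqref{cond:1} is then exactly this relative Cartan decomposition, which I would cite or prove directly by a reduction-theory argument valid uniformly over $\cF_{R,\pi}$.

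Next I would verify condition \eqref{cond:connectors}: for $x,y \in \pi^\Upsilon\fX$, the transporter $T_{x,y} = \{g \mid gx = y\}$ is a left translate of the stabilizer $S_{x_0,x_0} = H$ (or a conjugate/twisted form thereof), and after conjugating by the diagonal $\pi^\la$ the defining equations become of the explicit monomial type $\sum \ep_i a_{\mu_i}\pi^{\la_i} = \pi^\nu$ or $= 0$ with distinct indices $\mu_i$, appearing in Lemma \ref{lem:SmoothCrit}. Applying that lemma shows the closure $S_{x,y}$ is smooth over $R$. Here the key point is that the entries appearing in distinct equations are genuinely distinct, which holds because $H$ is a block-diagonal subgroup and the defining equations of its conjugates by diagonal matrices are ``sparse'' in the matrix entries. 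Condition (3), that $\val_F(x)$ is independent of $F$, is immediate since the coordinates of $\pi^\la x_0$ are monomials in $\pi$ with coefficients $\pm 1$. For condition \eqref{cond:GR} I would apply Lemma \ref{lem:GRCrit}: take $P_\ell$ to be a suitable subgroup of the $\ell$-th congruence subgroup — concretely the matrices in $K_\ell$ that are block-upper-triangular modulo a high power of $\pi$, or more simply a congruence subgroup of the ``$H$-parahoric'' direction — so that $P_\ell$ stabilizes $\pi^\alpha x_0$ exactly (condition (2) of that lemma) while still being normalized up into $K_\ell$ by the dominant coweights $\pi^\alpha$ (condition (1)). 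This is a routine but slightly fiddly matrix computation with the standard diagonal torus.

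The main obstacle I expect is condition \eqref{cond:connectors} together with pinning down $\fX$ and $\Upsilon$ precisely: one must choose the affine embedding and the representatives carefully so that, simultaneously, (a) the representatives genuinely exhaust all double cosets over every $F \in \cF_{R,\pi}$ in a field-independent way — this is the substance of a relative Cartan decomposition and requires knowing the structure of $H(F)$-orbits on the symmetric space — and (b) the transporter equations fall into the restrictive syntactic form demanded by Lemma \ref{lem:SmoothCrit}, with the distinctness-of-indices hypothesis satisfied. Getting both at once may force a non-obvious choice of coordinates on $G/H$ (for instance using the Cayley transform or writing the symmetric space as a space of decompositions $F^{n+k} = V_1 \oplus V_2$ rather than as a conjugacy class of involutions). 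Once the data is correctly set up, the four verifications are each short. I would therefore organize the proof as: (i) describe the data; (ii) prove the relative Cartan decomposition, giving \eqref{cond:1}; (iii) invoke Lemma \ref{lem:SmoothCrit} for \eqref{cond:connectors}; (iv) note (3) is trivial; (v) invoke Lemma \ref{lem:GRCrit} for \eqref{cond:GR}. Combined with Proposition \ref{peop:F.sph} and \cite[Theorem 1.1]{JR}, this yields Theorem \ref{thm:UniLinPer} via Corollary \ref{cor:GelGel2}.
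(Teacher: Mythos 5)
Your plan (exhibit $T$, the affine embedding, $\fX$, $\Upsilon$, then check the four conditions via Lemmas \ref{lem:SmoothCrit} and \ref{lem:GRCrit}) matches the structure of the paper's proof, but the central piece of data is wrong, and the gap you flag yourself as ``a possibly non-obvious choice of coordinates'' is in fact fatal to the proposal as written. You take $x_0=\ep H$ (the base point, i.e.\ the identity coset) and $T$ the diagonal maximal torus. But the diagonal torus is \emph{contained} in $H=\GL_n\times\GL_k$, since every diagonal matrix is block-diagonal. Hence $\pi^\la x_0=\pi^\la H=H=x_0$ for every $\la\in X_*(T)$, and $\pi^\Upsilon\fX=\fX$ is finite. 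This cannot surject onto the infinite set $K_0(F)\bs G(F)/H(F)$, so condition~\eqref{cond:1} fails outright. Citing the relative Cartan decomposition doesn't resolve this: the torus that appears there is a maximal $\theta$-split torus, which for this symmetric pair is \emph{not} a subgroup of the diagonal torus; using the diagonal torus with the identity base point ignores this. The ``finitely many $H(F)$-orbits of involutions'' you gesture at is also a red herring here --- the paper's $\fX$ is a single element.

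The paper's fix is to keep the diagonal torus but move the base point: $\fX=\{x_0\}$ with $x_0$ the coset of the unipotent matrix $u=\left(\begin{smallmatrix}Id_k&0&Id_k\\0&Id_{n-k}&0\\0&0&Id_k\end{smallmatrix}\right)$ and $\Upsilon=\{(\mu_1,\dots,\mu_k,0,\dots,0):\mu_1\le\cdots\le\mu_k\le0\}$. Because $u$ does not commute with $\pi^\mu$, one has $\pi^\mu u H = (\pi^\mu u\pi^{-\mu})H$, whose upper-right block is $\mathrm{diag}(\pi^{\mu_1},\dots,\pi^{\mu_k})$, so the translates $\pi^\mu x_0$ genuinely sweep out the Cartan representatives. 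The paper then verifies \eqref{cond:1} by an explicit elementary reduction (row/column operations), computes $G_{x_0}$ and the transporters $T_{\pi^{\la_1}x_0,\pi^{\la_2}x_0}$ in closed form and checks the hypotheses of Lemma \ref{lem:SmoothCrit}, reads off $\val_F(\pi^\mu x_0)=\mu_1$ via the embedding $g\mapsto g\ep g^{-1}\ep$ for condition~(3), and for \eqref{cond:GR} uses Lemma \ref{lem:GRCrit} with $P$ the group of block lower-triangular matrices with identity in the top-left block, solving the resulting matrix equation explicitly. So your steps (iii)--(v) are the right shape but hinge entirely on the correct $x_0$; until that is replaced, the argument does not go through.
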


\begin{proof}

Without loss of generality suppose that $n \geq k$.
Let $\fX=\{x_0\}$, where

$$
x_0:=\begin{pmatrix}
  Id_{k} & 0 & Id_{k} \\
  0 & Id_{n-k} & 0\\
  0 & 0        & Id_k
\end{pmatrix} H \text{ and } \Upsilon=\{(\mu_1,...,\mu_k,0,...,0) \in  X_*(T) \, | \,  \mu_1 \leq ... \leq
\mu_k \leq 0\}.$$

To show the first condition we show that every double
coset in $K_0\bs G/H$ includes an element of the form

$\begin{pmatrix}
  Id_{k} & 0 & diag(\pi^{\mu_1},...,\pi^{\mu_k}) \\
  0 & Id_{n-k} & 0\\
  0 & 0        & Id_k
\end{pmatrix}\text{ s.t. } \mu_1 \leq ... \leq
\mu_k \leq 0.$ Take any $g \in G$. By left multiplication by $K_0$
we can bring it to upper triangular form. By right multiplication
by $H$ we can bring it to a form $\begin{pmatrix}
  Id_{n} & A \\
  0 & Id_{k}
\end{pmatrix}$. Conjugating by a matrix $\begin{pmatrix}
  k_1 & 0 \\
  0 & k_2
\end{pmatrix}\in K_0 \cap H$ we can replace it by $\begin{pmatrix}
  Id_{n} & k_1Ak_2^{-1} \\
  0 & Id_{k}
\end{pmatrix}$. Hence we can bring $A$ to be a $k$-by-$(n-k)$ block of zero, followed by the a diagonal matrix of the form $diag(\pi^{\mu_1},...,\pi^{\mu_k})$ s.t. $\mu_1 \leq ... \leq
\mu_k .$ Multiplying by an element of
$K_0$ of the form $\begin{pmatrix}
  Id_{k} & 0 & k \\
  0 & Id_{n-k} & 0\\
  0 & 0        & Id_k
\end{pmatrix}$ we can bring $A$ to the desired form. \\

As for the second condition, we first compute the stabilizer ${G}_{x_0}$ of $x_0$ in $G$.
Note that the coset $x_0 \in G/H$ equals
$$ \left \{
\begin{pmatrix}
  g_1 & g_2 & h \\
  g_3 & g_4 & 0\\
  0 & 0        & h
\end{pmatrix}
 | \begin{pmatrix}
  g_1 & g_2 \\
  g_3 & g_4
\end{pmatrix} \in (\GL_n)_R,\, h \in (\GL_k)_R \right \}$$
and
$$\begin{pmatrix}
  A & B & C\\
  D & E & F\\
  G & H & I
\end{pmatrix}
\begin{pmatrix}
  Id_{k} & 0 & Id_{k} \\
  0 & Id_{n-k} & 0\\
  0 & 0        & Id_k
\end{pmatrix} = \begin{pmatrix}
  A & B & A+C\\
  D & E & D+F\\
  G & H & G+I
\end{pmatrix}.$$
Hence $$G_{x_0}= \left \{ \begin{pmatrix}
  g_1 & g_2 & h - g_1 \\
  g_3 & g_4 & -g_3\\
  0 & 0        & h
\end{pmatrix}
 | \begin{pmatrix}
  g_1 & g_2 \\
  g_3 & g_4
\end{pmatrix} \in (\GL_n)_R,\, h \in (\GL_k)_R \right \}.$$
Therefore, for any $\delta_1 = (\lambda_{1,1},...,\lambda_{1,k},0,...,0), \delta_2 = (\lambda_{2,1},...,\lambda_{2,k},0,...,0)\in
\Upsilon$,

\begin{multline*}
{G(F)}_{\pi^{\la_1}x_0,\pi^{\la_2}x_0} = \left \{ \begin{pmatrix}
  \pi^{\la_2}g_1\pi^{-\la_1} & \pi^{\la_2}g_2 & \pi^{\la_2}(h - g_1) \\
  g_3\pi^{-\la_1} & g_4 & -g_3\\
  0 & 0        & h
\end{pmatrix}\, |\,
\begin{pmatrix}
  g_1 & g_2 \\
  g_3 & g_4
\end{pmatrix} \in (\GL_n)_R,\, h \in (\GL_k)_R \right \}=\\
= \left \{ \begin{pmatrix}
  A & B & C \\
  D & E & F\\
  0 & 0        & I
\end{pmatrix}\in GL_{n+k}, |D=-F\pi^{-\la_1}, C=\pi^{\la_2}I-A\pi^{\la_1} \right \}.
\end{multline*}

The second condition of Definition \ref{defn:good.pair} follows now from Lemma
\ref{lem:SmoothCrit}.\\

As for the third condition, we use the embedding $G/H\to G$ given by $g\mapsto g\ep g^{-1} \ep$. It is easy to see that $val_F(\pi^\mu x_0)=\mu_1,$ which
is independent of $F$.

\DimaGRCor{ Let us now prove the last condition using Lemma
\ref{lem:GRCrit}. Take $l_0=1$ and $$P:= \left \{ \begin{pmatrix}
  Id & 0 & 0 \\
  D & E & F\\
  G & H        & I
\end{pmatrix}\in GL_{n+k} \right \}.$$
Let $P_l:=P(F)\cap K_l(GL_{n+k},F).$
The first condition of Lemma \ref{lem:GRCrit} obviously holds. To
show the second condition, we have to show that for any $F$, any
$l \geq 1$ and any $g \in K_l(GL_{n+k},F)$ there exist $p \in
P_l$ and $h\in H(F)$ such that $gx_0=
px_oh$. In other words, we have to solve the following equation:

$$
\begin{pmatrix}
  Id_k+A & B & Id_k+A+C\\
  D & Id_{n-k}+E & D+F\\
  G & H & Id_k+G+I
\end{pmatrix}=
\begin{pmatrix}
  Id_k & 0 & Id_k\\
  D' & Id_{n-k}+E' & D'+F'\\
  G' & H' & Id_k+G'+I'
\end{pmatrix}
\begin{pmatrix}
  Id_k+x & y & 0\\
  z & Id_k+w & 0\\
  0 & 0 & Id_k+h
\end{pmatrix},
$$
where all the \RGRCor{capital} letters denote matrices of appropriate sizes with entries in $\pi^l\cO_F$, and the matrices in the left hand side are parameters and matrices in the right hand side are unknowns.

The solution is given by:
\begin{multline*}
 x=A, \quad y=B, \quad z=D , \quad \RGRCor{w=} E , \quad  h= A+C \\
D' = 0, \quad E'=0, \quad F' = (D+F) (Id_k +A+C)^{-1},\\
H'=(H-G(Id_k+A)^{-1}B)(-D(Id_k+A)^{-1}B+Id_{n-k}+E)^{-1}\\
G'=(G-H'D)(Id_k+A)^{-1}, \quad
I'=(G+I-A-C)(Id_k+A+C)^{-1} - G'
\end{multline*}}

\end{proof}

\subsection{Structure of the spherical space $(\GL_{n+1}\times\GL_n)/\Delta \GL_n$} \lbl{subsec:PfGood}

Consider the embedding $\iota:\GL_n\hookrightarrow \GL_{n+1}$
given by
\[
A\mapsto\bmat 1&0\\0&A\emat.
\]

Denote $G=\GL_{n+1}(F)\times\GL_n(F)$ and $H=\Delta \GL_n(F)$.
The quotient space $G/H$ is isomorphic to $(\GL_{n+1})_R$ via the map
$(g,h)\mapsto g\iota(h^{-1})$. Under this isomorphism, the action
of $G$ on $G/H$ becomes $ (g,h)\cdot X=gX\iota(h^{-1})$.

The space $G/H$ is spherical.
Indeed, let $B\subset G$ be the
Borel subgroup consisting of pairs $(b_1,b_2)$, where $b_1$ is
lower triangular and $b_2$ is upper triangular, and let $x_0\in G/H$
be the point represented by the matrix
\[
x_0=\bmat1&e\\0&I\emat,
\]
where $e$ is a row vector of 1's. We claim that $Bx_0$ is open in
$G/H$. Let $\fb$ be the Lie algebra of $B$. It consists of pairs
$(X,Y)$ where $X$ is lower triangular and $Y$ is upper triangular.
The infinitesimal action of $\fb$ on $X$ at $x_0$ is given by
$(X,Y)\mapsto Xx_0-x_0d\iota(Y)$. To show that the image is
$\M_{n+1}$, it is enough to show that the images of the maps
$X\mapsto Xx_0$ and $Y\mapsto x_0d\iota(Y)$ have trivial
intersection. Suppose that $Xx_0=x_0d\iota(Y)$. Then
$X=x_0d\iota(Y)x_0^{-1}$, i.e.
\[
X=\bmat1&e\\&I\emat\bmat0&0\\0&Y\emat\bmat1&-e\\&I\emat=\bmat0&eY\\0&Y\emat.
\]
Since $X$ is lower triangular and $Y$ is upper triangular, both
have to be diagonal. But $eY=0$ implies that $Y=0$, and hence also
$X$=0. Proposition \ref{peop:F.sph} implies that the pair $(G,H)$ is $F$-spherical.

The following describes the quotient $G(O_F)\bs G(F)/H(F)$.

\begin{lem} \lbl{lem:singular.values} For every matrix $A\in\M_{n+1}(F)$ there are $k_1\in\GL_{n+1}(O)$ and $k_2\in\GL_n(O)$ such that
\begin{equation} \lbl{eq:pi.la}
k_1A\iota(k_2)=\bmat\pi^a&\pi^{b_1}&\pi^{b_2}&\dots&\pi^{b_n}\\&\pi^{c_1}&&&\\ &&\pi^{c_2}&\\&&&\ddots&\\ &&&&\pi^{c_n}\emat,
\end{equation}
where the numbers $a,b_i,c_i$ satisfy that if $i<j$ then $c_i-c_j\leq b_i-b_j\leq0$ and $b_1\leq c_1$.
\end{lem}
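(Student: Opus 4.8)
The statement is a relative Cartan decomposition, and I would prove it by an explicit reduction exploiting the available operations: left multiplication by $\GL_{n+1}(O_F)$ is an arbitrary row operation, while right multiplication by $\iota(k_2)$ with $k_2\in\GL_n(O_F)$ is an arbitrary column operation on the last $n$ columns, the first column being left untouched. The plan is to normalize, in this order, the first column, then the lower right $n\times n$ block, and finally the first row $(\ast,w_1,\dots,w_n)$; the inequalities asserted in the lemma will turn out to be exactly the conditions saying that the last normalization can no longer be improved.

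\emph{First column and lower block.} Let $v\in F^{n+1}$ be the first column of $A$ and $a:=\val_F(v)$ (with $a=+\infty$ if $v=0$). Since $O_F$ is a discrete valuation ring, $v=\pi^a u$ with $u$ a primitive vector in $O_F^{n+1}$, and a primitive vector is part of an $O_F$-basis, so there is $g\in\GL_{n+1}(O_F)$ with $gv=\pi^a e_1$; replacing $A$ by $gA$ we may assume $A=\begin{pmatrix}\pi^a & w^T\\ 0 & C\end{pmatrix}$ with $w\in F^n$, $C\in\M_n(F)$. Now row operations on the last $n$ rows (left multiplication by $\begin{pmatrix}1&0\\0&G'\end{pmatrix}$, $G'\in\GL_n(O_F)$) fix the first row and first column, and column operations on the last $n$ columns fix the first column and the entry $\pi^a$; together they act on $C$ by the full $\GL_n(O_F)\times\GL_n(O_F)$ action. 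By the Smith normal form of $C$ over $O_F$, choose these so that $C$ becomes $D:=\operatorname{diag}(\pi^{c_1},\dots,\pi^{c_n})$ with $c_1\le\dots\le c_n$ (putting $c_i=+\infty$ for the vanishing elementary divisors if $C$ is singular). Then $A=\begin{pmatrix}\pi^a & w^T\\ 0 & D\end{pmatrix}$.

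\emph{First row.} The symmetries preserving the shape $\begin{pmatrix}\pi^a & \ast\\ 0 & D\end{pmatrix}$ are: (a) left multiplication by $\begin{pmatrix}1 & b^T\\0 & I\end{pmatrix}$, $b\in O_F^n$, replacing $w_i$ by $w_i+b_i\pi^{c_i}$, so $w_i$ may be altered by any element of $\pi^{c_i}O_F$; and (b) the pairs $(G',k_2)\in\GL_n(O_F)^2$ with $G'Dk_2=D$, equivalently $k_2=D^{-1}G'^{-1}D$, replacing $w^T$ by $w^Tk_2$. The group in (b) contains every upper triangular matrix over $O_F$, contains $I+tE_{ij}$ for $i>j$ whenever $t\in\pi^{c_i-c_j}O_F$, and contains the $\GL$-factor of each block of indices on which the $c_i$ coincide. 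Using (a) one first arranges that each $w_i$ either satisfies $\val_F(w_i)<c_i$ or equals $\pi^{c_i}$; set $b_i:=\val_F(w_i)$, so $b_i\le c_i$ for all $i$, and in particular $b_1\le c_1$. A direct computation with the elementary matrices in (b), which act by $w_j\mapsto w_j-t\pi^{c_j-c_i}w_i$, shows that after such a move followed by a further application of (a) the entry $w_j$ can be killed — and then reset to $\pi^{c_j}$ — whenever $i<j$ and $b_j-b_i\ge c_j-c_i$, or whenever $i>j$ and $b_i\le b_j$. Hence the fully reduced representative must satisfy $b_i-c_i\ge b_j-c_j$ and $b_i\le b_j$ for all $i<j$, i.e. $c_i-c_j\le b_i-b_j\le 0$; rescaling the surviving entries by units (and using the block $\GL$-factors to separate coincident weights) makes every $w_i$ a power of $\pi$, giving the asserted form.

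The main obstacle is this last step: the individual reductions above must be assembled into a \emph{terminating} procedure yielding a single representative that satisfies all the inequalities at once, since a priori one reduction can spoil a previously achieved one, and one must also control the interplay with repeated values among the $c_i$ and with the singular case (where some $c_i$, and possibly $a$, are infinite). I would do this by induction on $n$: pick a coordinate realizing $\min_i b_i$, move it — using the permutations available within a block of equal weights, or the reductions just described — into a position whose row and column can be cleared against it, and recurse on the resulting $(n-1)\times n$ datum, checking that the inequalities for the smaller datum combine with the weight and exponent produced in this step to give precisely the claimed inequalities.
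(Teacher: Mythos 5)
You follow the paper's route step for step through the first two normalizations: use the valuation of the first column to reduce to $\begin{pmatrix}\pi^a & w\\ 0 & C\end{pmatrix}$, then apply Cartan/Smith normal form on $C$ by the $\GL_n(O)\times\GL_n(O)$-action that fixes the first row and column. Your analysis of the residual symmetry group (translations of $w_i$ by $\pi^{c_i}O_F$ coming from unipotent left multiplication, together with the twisted conjugation $k_2=D^{-1}G'^{-1}D$ and the resulting moves $w_j\mapsto w_j - t\,\pi^{c_j-c_i}w_i$, with $t\in O_F$ for $i<j$ and $t\in\pi^{c_i-c_j}O_F$ for $i>j$) is correct and amounts to the same collection of row/column operations the paper uses.

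However, the final assembly step is a genuine gap, and you correctly flag it but do not close it. Your proposed elementary reduction --- ``kill $w_j$ and reset it to $\pi^{c_j}$'' --- is the wrong normal move: after the preliminary pass you have $b_j\le c_j$, so resetting $b_j$ to $c_j$ can \emph{increase} $b_j$, which destroys the obvious monovariant and can create new violations for pairs $(\ell,j)$ with $\ell<j$. The sketched induction on $n$ (``pick a coordinate realizing $\min_i b_i$, move it into a clearable position, recurse'') is too vague to fill the gap: the values $c_1\le\dots\le c_n$ are pinned and you can only permute within blocks of equal $c_i$, so the promised ``move it into a position whose row and column can be cleared'' is not available in general. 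The paper avoids this issue by choosing reductions that \emph{always strictly decrease} some $b$: if $i<j$ and $b_i>b_j$, a single column+row operation lowers $b_i$ to $b_j$; if $i<j$ and $b_j-b_i>c_j-c_i$, a column+row operation lowers $b_j$ by exactly one (the paper's text says ``$b_i$ becomes smaller'' but the computation shows it is $b_j$); if $c_1<b_1$, adding row two to row one lowers $b_1$ to $c_1$. Since every move strictly decreases $\sum_i b_i$ and each $b_i$ is bounded below by $\min_{k,l}\val_F(A_{kl})$, the procedure terminates, and the terminal configuration satisfies all the stated inequalities simultaneously. This monovariant is the missing idea; without it (or a correctly executed alternative such as your induction), the argument is incomplete.
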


\begin{proof} Let $a$ be the minimal valuation of an element in the first column of $A$. There is an integral matrix $w_1$ such that the first column of the matrix $w_1A$ is $\pi^a,0,0,\ldots,0$. Let $C$ be the $n\times n$ lower-right sub-matrix of $w_1A$. By Cartan decomposition, there are integral matrices $w_2,w_3$ such that $w_2Cw_3^{-1}$ is diagonal, and its diagonal entries are $\pi^{c_i}$ for a non-decreasing sequence $c_i$. Finally, there are integral and diagonal matrices $d_1,d_2$ such that the matrix $d_1\iota(w_2)w_1A\iota(w_3^{-1})\iota(d_2^{-1})$  has the form (\ref{eq:pi.la}).

Suppose that $i<j$ and $b_i>b_j$. Then adding the $j$'th column to the $i$'th column and subtracting $\pi^{c_j-c_i}$ times the $i$'th row to the $j$'th row, we can change the matrix (\ref{eq:pi.la}) so that $b_i=b_j$. Similarly, if $i<j$ and $b_i-b_j<c_i-c_j$, then adding $\pi^{b_j-b_i-1}$ times the $i$'th column to the $j$'th column, and subtracting $\pi^{c_i+b_j-b_i-1-c_j}$ times the $j$'th row to the $i$'th row changes the matrix (\ref{eq:pi.la}) so that $b_i$ becomes smaller in $1$. Finally, if $c_1<b_1$ than adding the second row to the first changes the matrix so that $c_1=b_1$.
\end{proof}

Let $T\subset G$ be the torus
consisting of pairs $(t_1,t_2)$ such that $t_i$ are diagonal. The
co-character group of $T$ is the group $\bZ^{n+1}\times\bZ^n$. The
positive Weyl chamber of $T$ that is defined by $B$\footnote{The positive Weyl chamber defined by the Borel $B$ is the subset of co-weights $\la$ such that $\pi^{\la}B(O)\pi^{-\la}\subset B(O)$} is the set
$\De\subset X_*(T)$ consisting of pairs $(\mu,\nu)$ such that the
$\mu_i$'s are non-decreasing and the $\nu_i$'s are non-increasing.
Lemma \ref{lem:singular.values} implies that the set
$\left\{\pi^\la x_0\right\}_{\la\in\De}$ is a complete set of
orbit representatives for $G(O)\bs G(F)/H(F)$.\\

We are ready to prove that $(G,H)$ is uniform spherical.

\begin{prop} \lbl{prop:good.pair.GLn+1xGLn} The pair $((\GL_{n+1})_R\times(\GL_n)_R, \Delta (\GL_n)_R)$ is uniform spherical.
\end{prop}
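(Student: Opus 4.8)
The plan is to verify the four conditions of Definition \ref{defn:good.pair} with the data already visible in Subsection \ref{subsec:PfGood}: let $T\subset G=(\GL_{n+1})_R\times(\GL_n)_R$ be the diagonal torus, let $\fX=\{x_0\}$ be the singleton consisting of the coset of $\bmat 1&e\\0&I\emat$, take as the affine embedding $G/H\hookrightarrow(\GL_{n+1})_R\subset\M_{n+1}=\bA^{(n+1)^2}$ the one given by $(g,h)\mapsto g\iota(h^{-1})$, and set $\Upsilon=\De$, the positive Weyl chamber of $T$ attached to the Borel $B$ of Subsection \ref{subsec:PfGood}.

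Condition \ref{cond:1} is exactly the content of Lemma \ref{lem:singular.values}, which shows that $\{\pi^\la x_0\}_{\la\in\De}$ maps onto $K_0(F)\bs G(F)/H(F)$ for every $F\in\cF_{R,\pi}$. Condition 3 is immediate: for $\la=(\mu,\nu)$ the representing matrix of $\pi^\la x_0$ is $\pi^\mu\bmat 1&e\\0&I\emat\iota(\pi^\nu)^{-1}$, whose nonzero entries are $\pi^{\mu_0}$, the $\pi^{\mu_0-\nu_j}$ and the $\pi^{\mu_i-\nu_i}$, so that $\val_F(\pi^\la x_0)=\min(\mu_0,\min_j(\mu_0-\nu_j),\min_i(\mu_i-\nu_i))$ does not depend on $F$. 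For condition \ref{cond:GR} I would argue as in Proposition \ref{prop:JR}: first record, as in Subsection \ref{subsec:PfGood}, that $\Stab_G(x_0)$ is the graph of the homomorphism $\rho\colon\GL_n\to\GL_{n+1}$, $\rho(d)=\bmat 1&e(d-I)\\0&d\emat$, so $\Stab_G(x_0)\cong\GL_n$ is smooth; then apply Lemma \ref{lem:GRCrit} with $P_\ell:=K_\ell(G,F)\cap P$, where $P$ is the group of pairs $(p_1,p_2)$ with $p_1\in\GL_{n+1}$ lower triangular and $p_2\in\GL_n$ upper triangular (i.e.\ $P=B$). The inclusion $\pi^\al P_\ell\pi^{-\al}\subset K_\ell$ for $\al\in\De$ is read off from the signs of the exponents, and the identity $K_\ell x_0=P_\ell x_0$ is the Iwasawa-type decomposition $K_\ell=P_\ell\cdot(\Stab_G(x_0)\cap K_\ell)$, which one checks by solving an explicit matrix equation of the same type as the one at the end of the proof of Proposition \ref{prop:JR}.

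Condition \ref{cond:connectors} is the heart of the matter. Since $\pi^{\la_1},\pi^{\la_2}\in T(R[\pi^{-1}])$, for $x=\pi^{\la_1}x_0$ and $y=\pi^{\la_2}x_0$ we have, over $R[\pi^{-1}]$, $T_{x,y}=\pi^{\la_2}\,\Stab_G(x_0)\,\pi^{-\la_1}$; combining this with $\Stab_G(x_0)\cong\GL_n$, the scheme $T_{x,y}$ is the image of the closed immersion of $(\GL_n)_{R[\pi^{-1}]}$ sending $d$ to the pair $(g_1,g_2)$ with $(g_1)_{00}=\pi^{\mu^{(2)}_0-\mu^{(1)}_0}$, $(g_1)_{i0}=0$, $(g_1)_{0j}=\pi^{\mu^{(2)}_0-\mu^{(1)}_j}\bigl((\textstyle\sum_i d_{ij})-1\bigr)$, $(g_1)_{ij}=\pi^{\mu^{(2)}_i-\mu^{(1)}_j}d_{ij}$, and $(g_2)_{ij}=\pi^{\nu^{(2)}_i-\nu^{(1)}_j}d_{ij}$ (with rows and columns of $\GL_{n+1}$ indexed $0,\dots,n$). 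I would then compute the schematic closure $S_{x,y}$ in $G_R$ case by case. If one of these formulas forces an entry of $g_1$ or $g_2$ to equal a negative power of $\pi$, then the corresponding equation makes $\pi$ invertible on $S_{x,y}$, so $S_{x,y}=T_{x,y}\cong(\GL_n)_{R[\pi^{-1}]}$, which is smooth over $R$ because $\Spec R[\pi^{-1}]$ is an open subscheme of $\Spec R$; by Lemma \ref{lem:singular.values} this disposes of all pairs with $x\neq y$. For $x=y$, write $\mu$ for $\mu^{(1)}=\mu^{(2)}$ and $\nu$ for $\nu^{(1)}=\nu^{(2)}$, and use that $\mu$ is non-decreasing and $\nu$ non-increasing: for each $(i,j)$ at least one of the exponents $\mu_i-\mu_j$, $\nu_i-\nu_j$ is $\leq 0$, so one of $(g_1)_{ij},(g_2)_{ij}$ equals $\pi^{-k_{ij}}d_{ij}$ with $k_{ij}\geq 0$, and hence $d_{ij}$ lies in the coordinate ring of $S_{x,x}$. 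Introducing for each off-diagonal $(i,j)$ a coordinate $u_{ij}$ equal to that rescaled entry (so $d_{ij}=\pi^{k_{ij}}u_{ij}$), setting $w_j:=(g_1)_{0j}$, and using the relation $\sum_i d_{ij}=1+\pi^{\mu_j-\mu_0}w_j$ to eliminate the diagonal entries $d_{jj}$, one finds that the coordinate ring of $S_{x,x}$ is a localization at the single element $\det(g_2)$ of a polynomial ring over $R$ in $n^2$ variables; hence $S_{x,x}$ is smooth over $R$.

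The main obstacle is exactly this closure computation for condition \ref{cond:connectors}: because $\Delta\GL_n$ couples the two factors, the single parameter $d_{ij}$ governs entries of both $g_1$ and $g_2$ and also contributes to the affine entries $(g_1)_{0j}$, so one cannot apply Lemma \ref{lem:SmoothCrit} verbatim — its hypothesis that each matrix entry occur in at most one defining equation fails here. Instead one must organize, pair by pair, which matrix entry to take as the free coordinate, a choice dictated by the relative sizes of $\mu^{(2)}_i-\mu^{(1)}_j$ and $\nu^{(2)}_i-\nu^{(1)}_j$ (controlled by the fact that $\la_1,\la_2$ lie in the positive Weyl chamber), so as to exhibit the closure of $T_{x,y}$ as an open subscheme of an affine space over $R$.
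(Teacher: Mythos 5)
Your choice of data (the diagonal torus $T$, the singleton $\fX=\{x_0\}$, $\Upsilon$ the positive Weyl chamber, and the embedding $G/H\cong\GL_{n+1}$) coincides with the paper's, and conditions \ref{cond:1}, 3 and \ref{cond:GR} are handled essentially as in the paper (condition \ref{cond:GR} via Lemma \ref{lem:GRCrit} with $P=B$ and the Iwasawa-type factorization of $K_\ell$, exactly as in the paper's last paragraph).

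For condition \ref{cond:connectors}, however, you diverge from the paper and your substitute argument has a gap. The paper records the equations $D=\pi^c k_2\pi^{-c}$, $B\pi^c=\pi^b-\pi^{b'}k_2$ (together with $a=a'$, $c=c'$) and invokes Lemma \ref{lem:SmoothCrit} directly. Your objection to this hinges on reading ``the indices $\mu_i$ are distinct for all the equations'' as meaning that no matrix entry may appear in two different equations; under the weaker (and likely intended) reading — distinct within each single equation — the hypothesis is satisfied here, and the paper simply applies the lemma. More importantly, your replacement argument for $x\neq y$ is not correct as stated: it is not true that some entry of $g_1$ or $g_2$ is forced to be a \emph{negative} power of $\pi$. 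Take $n=1$, $\la_1=\bigl((0,0),(0)\bigr)$, $\la_2=\bigl((0,1),(0)\bigr)$; then $(g_1)_{00}=1$, $(g_1)_{01}=d-1$, $(g_1)_{11}=\pi d$, $(g_2)_{11}=d$ — no negative exponent appears, yet $x\neq y$. In this and all similar cases the closure still has empty special fiber, but the reason is the $\GL$-invertibility constraints: here $\det g_1=\pi d$ and $\det g_2=d$ must both be units, forcing $\pi$ invertible. So the correct dichotomy is not ``negative exponent vs. none'' but rather driven by whether the equations together with the determinant conditions force $\pi$ to become a unit in the closure's coordinate ring; Lemma \ref{lem:singular.values} enters by guaranteeing that for $x\neq y$ this always happens (equivalently, $T_{x,y}(F)\cap G(O_F)=\emptyset$), but you must route the argument through the determinants, not through the formulas for individual entries. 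Your computation for $x=y$ (choosing which of $(g_1)_{ij}$, $(g_2)_{ij}$ to take as the free coordinate according to the signs of $\mu_i-\mu_j$, $\nu_i-\nu_j$, then eliminating the diagonal via the $(g_1)_{0j}$ equations) is sound and gives the coordinate ring as a localization of $R[u_{ij},w_j]$ at $\det d$, which does establish smoothness of $S_{x,x}$; this is essentially an unwinding of what Lemma \ref{lem:SmoothCrit} is meant to encapsulate.
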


\begin{proof}
Let $\Upsilon\subset X_*(T)$ be the positive Weyl chamber and let $\fX:=\{x_0\}$. By the above, the first condition of Definition \ref{defn:good.pair} holds. As for the second condition, an easy computation shows that if $a,b_1,\ldots,b_n,c_1,\ldots,c_n\in\bZ$, $a',b_1',\ldots,b_n',c_1',\ldots,c_n'\in\bZ$ satisfy the conclusion of Lemma \ref{lem:singular.values}, and $(k_1,k_2)\in G(O)$ satisfy that
\[
k_1\bmat\pi^a&\pi^{b_1}&\pi^{b_2}&\dots&\pi^{b_n}\\&\pi^{c_1}&&&\\ &&\pi^{c_2}&\\&&&\ddots&\\ &&&&\pi^{c_n}\emat\iota(k_1)=\bmat\pi^{a'}&\pi^{b_1'}&\pi^{b_2'}&\dots&\pi^{b_n'}\\&\pi^{c_1'}&&&\\ &&\pi^{c_2'}&\\&&&\ddots&\\ &&&&\pi^{c_n'}\emat,
\]
then $a=a'$, $c_i=c_i'$, $k_1$ has the form $\bmat1&B\\0&D\emat$,
where $B$ is a $1\times n$ matrix and $D$ is an $n\times n$ matrix
that satisfy the equations $D=\pi^c k_2\pi^{-c}$ and
$B\pi^c=\pi^b-\pi^{b'}k_2$, where $\pi^c$ denotes the diagonal
matrix with entries $\pi^{c_1},\ldots,\pi^{c_n}$, $\pi^b$ denotes
the row vector with entries $\pi^{b_i}$, and $\pi^{b'}$ denotes
the row vector with entries $\pi^{b_i'}$. The second condition of
Definition \ref{defn:good.pair} holds by Lemma \ref{lem:SmoothCrit}.

\RGRCor{The} third condition follows because, using the affine embedding as above,
$\pi^\la x_0$
 has the form (\ref{eq:pi.la}) and so
$val_F(\pi^\la x_0)$
 is independent of $F$.

\RGRCor{
Finally it is left to verify the last condition. In the following, we will distinguish between the $\ell$th congruence subgroup in $\GL_{n+1}(F)$, which we denote by $K_\ell(\GL_{n+1}(F))$, the $\ell$th congruence subgroup in $\GL_{n}(F)$, which we denote by $K_\ell(\GL_{n}(F))$, and the $\ell$th congruence subgroup in $G=\GL_{n+1}(F)\times\GL_n(F)$, which we denote by $K_\ell$. By lemma \ref{lem:GRCrit} it is enough to show that $(B\cap K_l) x_0=K_lx_0.$ It is easy to see that   $K_lx_0= x_0+\pi^{l}Mat_{n}(O_{F}).$ Let $y \in x_0+\pi^{l}Mat_{n}(O_{F})$. We have to show that $y\in (B\cap K_l) x_0$. In order to do this let us represent $y$ as a block matrix $$y=\bmat a&b\\c&D\emat,$$ where $a$ is a scalar and  $D$ is $n\times n$ matrix. Using left multiplication  by lower triangular matrix from $K_l(\GL_{n+1}(F))$  we may bring $y$ to the form $\bmat1&b'\\0&D'\emat$. We can decompose $D'=LU$, where $L,U \in K_l(\GL_{n+1}(F))$ and $L$ is lower triangular and $U$ is upper triangular. Therefore by action of an element from $B\cap K_l$ we may bring $y$ to the form $\bmat1&b''\\0&Id\emat$. Using right multiplication  by diagonal  matrix from $K_l(\GL_{n+1}(F))$  (with first entry 1) we may bring $y$ to the form $\bmat1&e\\0&D''\emat,$ where $e$ is a row vector of 1's and $D''$ is a diagonal matrix. Finally, using left multiplication  by diagonal  matrix from $K_l(\GL_{n+1}(F))$   we may bring $y$ to be $x_0.$  
}
\end{proof}

\subsection{The Pair $(\GL_{n+1}\times\GL_n,\Delta \GL_n)$}
\lbl{subsec:GL}$ $

In this section we prove Theorem \ref{thm:Mult1} which states that
$(\GL_{n+1}(F),\GL_n(F))$ is a strong Gelfand pair for any local
field $F$, i.e. for any irreducible smooth representations $\pi$
of $\GL_{n+1}(F)$ and $\tau$ of $\GL_{n}(F)$ we have
$$\dim \Hom_{\GL_{n}(F)} (\pi, \tau) \leq 1.$$
It is well known (see e.g. \cite[section 1]{AGRS})
that this theorem is equivalent to the statement that
$(\GL_{n+1}(F)\times\GL_n(F),\Delta \GL_n(F))$, where $\Delta
\GL_n$ is embedded in $\GL_{n+1}\times\GL_n$ by the map
$\iota\times Id$, is a Gelfand pair.

By Corollary \ref{cor:GelGel} this statement follows from Proposition \ref{prop:good.pair.GLn+1xGLn}, and the following
\RGRCor{theorem:}

\begin{thm}[\cite{AGRS}, Theorem 1]
Let $F$ be a local field of characteristic 0. Then
$(\GL_{n+1}(F),\GL_n(F))$ is a strong Gelfand pair.
\end{thm}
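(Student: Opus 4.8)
This theorem is the characteristic-$0$ input to our argument; it is due to \cite{AGRS}, and the plan I would follow to establish it is a Gelfand--Kazhdan-type argument with invariant distributions. First, as is recalled in the text, the strong Gelfand property of $(\GL_{n+1}(F),\GL_n(F))$ is equivalent to the Gelfand property of $(\GL_{n+1}(F)\times\GL_n(F),\Delta\GL_n(F))$. To the latter I would apply the Gelfand--Kazhdan criterion with the anti-involution $\sigma(g,h)=(g^{t},h^{t})$, which preserves $\Delta\GL_n(F)$: it then suffices to prove that every distribution on $\GL_{n+1}(F)\times\GL_n(F)$ invariant under $\Delta\GL_n(F)$ is $\sigma$-invariant, and transporting along $G/H\cong\GL_{n+1}(F)$ this says that every $\GL_n(F)$-conjugation-invariant distribution on $\GL_{n+1}(F)$ is invariant under transposition.

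Next I would linearize. Using a Gelfand--Kazhdan/Harish-Chandra localization near semisimple conjugacy classes together with a Cayley transform one reduces the group statement to the corresponding statement on $\mathfrak{gl}_{n+1}(F)$ under the adjoint action of $\GL_n(F)$, and then, using the $\GL_n(F)$-stable decomposition $\mathfrak{gl}_{n+1}=\mathfrak{gl}_n\oplus F^n\oplus(F^n)^{*}\oplus F$ (the last summand fixed), to the model space $X_n=\mathfrak{gl}_n(F)\times F^n\times(F^n)^{*}$ with $g\cdot(A,v,\phi)=(gAg^{-1},gv,\phi g^{-1})$ and the involution $\theta(A,v,\phi)=(A^{t},\phi^{t},v^{t})$. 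The goal becomes: every $\GL_n(F)$-invariant distribution on $X_n$ is $\theta$-invariant, which I would prove by induction on $n$.

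The inductive step has two halves. The first is Harish-Chandra descent: decompose the $\mathfrak{gl}_n$-component $A$ into its semisimple and nilpotent parts $A=A_s+A_n$; whenever $A_s$ is non-central, Luna's étale slice theorem identifies a neighbourhood of the $\GL_n(F)$-orbit with the corresponding model space for a proper Levi $\prod_i\GL_{n_i}(F)$, and Bernstein's localization principle, together with the compatibility of $\theta$ with this product decomposition, lets the inductive hypothesis handle all distributions whose support meets the non-central-$A_s$ locus. One is then reduced to distributions supported on the set where $A$ is scalar plus nilpotent, and, after twisting away the scalar, on the ``singular'' cone $\mathcal{S}\subset X_n$ where $A$ is nilpotent.

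The second half is the nilpotent analysis. Fixing a $\GL_n(F)$- and $\theta$-invariant bilinear form on $X_n$ one obtains a Fourier transform on $X_n$, and I would show that a $\GL_n(F)$-invariant, $\theta$-anti-invariant distribution supported on $\mathcal{S}$ has Fourier transform again supported on $\mathcal{S}$; an $\mathfrak{sl}_2$-homogeneity degree estimate (of Jacquet--Rallis / Aizenbud--Gourevitch type) then forces it to be proportional to its own Fourier transform with an impossible sign, hence zero. Combined with the explicit classification of the relevant nilpotent $\GL_n(F)$-orbits in $X_n$ this closes the induction. The main obstacle is precisely this descent machinery: it rests on the Jordan decomposition in $\mathfrak{gl}_n(F)$ and on Luna's slice theorem, both of which fail over the non-perfect local fields of positive characteristic, which is exactly why the present paper instead transfers the result through close-field approximation via Corollary \ref{cor:GelGel} (and Theorem \ref{thm:phiModIso}).
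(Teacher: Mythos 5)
The paper itself gives no proof of this statement: it is imported verbatim as Theorem~1 of \cite{AGRS}, and is precisely the characteristic-zero black box that Corollary~\ref{cor:GelGel} together with Proposition~\ref{prop:good.pair.GLn+1xGLn} is designed to transfer to positive characteristic. So there is no internal argument to compare against; the only thing to check is whether your sketch faithfully reflects what \cite{AGRS} does, and it does. You correctly reduce via Gelfand--Kazhdan and the anti-involution $(g,h)\mapsto(g^{t},h^{t})$ to the invariance of $\GL_n(F)$-conjugation-invariant distributions on $\GL_{n+1}(F)$ under transposition, linearize to the model $\mathfrak{gl}_n(F)\times F^n\times(F^n)^{*}$, run Harish--Chandra descent on the non-central semisimple part (the slice/localization machinery is developed in \cite{AG_HC} and used in \cite{AGRS}), and handle the nilpotent cone by the Fourier-transform/homogeneity argument that is the genuine novelty of \cite{AGRS}. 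You also identify exactly the obstruction that makes the present paper's transfer necessary: the descent step hinges on Jordan decomposition and slice theorems, which break over the non-perfect local fields of positive characteristic --- this matches the remark in the introduction verbatim. No gap; your sketch is a correct summary of the cited proof, not of an argument in this paper.
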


\end{document}